\documentclass[11pt, letterpaper]{article}
\usepackage{amsmath, amsthm, amssymb, amsfonts} 
\usepackage{mathrsfs} 
\usepackage{bm} 
\usepackage{graphicx} 
\usepackage{enumerate} 
\usepackage{geometry} 
\usepackage{setspace} 
\usepackage{lmodern} 
\usepackage{hyperref} 
\usepackage{color} 
\usepackage{xcolor} 
\usepackage{url} 
\usepackage{mathtools}
\usepackage{enumitem}
\usepackage{changepage}
\usepackage{microtype}
\usepackage{authblk}
\usepackage{amsfonts}
\usepackage{mathrsfs,amscd,amssymb,amsthm,amsmath,bm,graphicx,psfrag,subfigure,url,mathtools}
\usepackage{pict2e}
\usepackage{psfrag,amsmath}
\usepackage{tikz}
\usepackage{indentfirst}
\usepackage{hyperref}
\usepackage{bookmark}
\usepackage{enumerate}
\usepackage{latexsym,euscript,epic,eepic,color}
\usepackage{multirow}
\usepackage{multicol}
\usepackage{longtable}
\usepackage{adjustbox}

\usepackage{setspace}
\usepackage{epstopdf}
\allowdisplaybreaks
\usepackage{authblk}
\usepackage{pifont}
\numberwithin{equation}{section}

\usepackage{enumitem}
\usepackage{microtype}

\theoremstyle{plain}
\newtheorem{theorem}{Theorem}[section]
\newtheorem{lemma}[theorem]{Lemma}
\newtheorem{proposition}[theorem]{Proposition}

\newtheorem{conjecture}{Conjecture}

\theoremstyle{definition}

\theoremstyle{remark}

\newcommand{\tr}{\operatorname{tr}}
\newcommand{\dist}{\operatorname{dist}}
\newcommand{\1}{\mathbf 1}

\newcommand{\cP}{\mathcal P}

\hypersetup{
    colorlinks=true,
    linkcolor=blue,
    citecolor=red,
    urlcolor=magenta,
}

\newcommand{\keywords}[1]{%
  \par\vspace{6pt}\noindent\textbf{Keywords: }#1\par
}

\newcommand{\MSC}[2][2020]{%
  \par\vspace{3pt}\noindent\textbf{MSC(#1): }#2\par
}

\title{\bf The edge spectral extremal problem for odd wheels
in nonzero residue classes}
\vspace{6mm}
\author{Honghao Chen}
\author{Jing Gao}
\author{Shuchao Li\thanks{Corresponding author. \\
\hspace*{2em}E-mail address: key@mails.ccnu.edu.cn (H. Chen), gjing1270@163.com (J. Gao), lscmath@ccnu.edu.cn (S. Li)} } 

\affil{School of Mathematics and Statistics, and Hubei Key Lab--Math. Sci.,\linebreak Central China Normal University, Wuhan 430079, China}

\date{\today}

\allowdisplaybreaks
\begin{document}
\baselineskip=0.23in

\maketitle

\begin{abstract}
For a fixed integer $k\ge 2$, let $W_{2k+1}=K_1\vee C_{2k}$ be an odd wheel graph. The fixed-size spectral extremal problem aims to determine
\[
  \operatorname{spex}(m,W_{2k+1}):=\max\{\rho(G): e(G)=m,\ G \text{ is } W_{2k+1}\text{-free}\},
\]
where $\rho(G)$ denotes the adjacency spectral radius. Based on this problem, Yu, Li, and Peng~\cite{YLP} proposed the following conjecture: For large $m$, every $W_{2k+1}$-free graph of size $m$ satisfies
\(
 \rho(G)^2-(k-1)\rho(G)\le m-\binom{k}{2}
\)
with equality precisely for $K_k\vee qK_1$ and $m-\binom{k}{2}=kq$.
Very recently, Fang, Zhai and Zhang~\cite{FangZhaiZhang} confirmed the Yu--Li--Peng conjecture. When $m$ is large, $k\ge 2$ and $m-\binom{k}{2}$ is not divisible by $k$, the exact solution for the above problem is still open. Regarding this problem, Yu, Zhang, and Zhang~\cite{YZZ} proposed the following conjecture: Let $r$ be a nonzero remainder when $m-\binom{k}{2}$ is divided by $k$. Then $S_{k,m}$ is the unique graph among $W_{2k+1}$-free graphs of size $m$ having maximum spectral radius, where $S_{k,m}$ is obtained from $K_k\vee qK_1$ by adding a vertex $z$ and joining it to exactly $r$ vertices of the $K_k$. In this paper we address this problem in each nonzero residue class. Our result completely settles the Yu-Zhang-Zhang conjecture for $k\ge 3$. 
\end{abstract}

\keywords{Dense core; Independent twins; Odd wheel; Spectral radius}

\MSC{05C50, 05C35}

\section{Introduction}

All graphs in this paper are finite, simple and undirected. We adopt the notation and terminology of graph theory as in \cite{BM2008, Godsil1} unless otherwise stated. For a graph $G$, let $e(G)$ and $\rho(G)$ denote its number of edges and adjacency
spectral radius, respectively.  Since isolated vertices affect neither $e(G)$ nor $\rho(G)$, graphs are understood to be connected. 

The fixed-size spectral extremal problem asks for
\[
 \max\{\rho(G):e(G)=m,\ G\text{ is an } F \text{-free connected graph}\}.
\]
It is the edge-count analogue of the usual fixed-order spectral Tur\'an type
problem and goes back to Brualdi and Hoffman~\cite{BrualdiHoffman}. Research in this direction dates back to 1970, when Nosal~\cite{N1} proved $\rho(G)\le \sqrt{e(G)}$ for every triangle-free graph $G.$ It is generalized by Nikiforov~\cite{E1} who showed that $\rho(G)\le \sqrt{(1-1/r)2e(G)}$ for every $K_{r+1}$-free graph $G$. In recent years, the above fixed-size spectral extremal problem attracted more and more researchers' attention. Exact fixed-size results are known for several forbidden cycles, complete bipartite graphs, fans, friendship graphs, theta graphs, star forest, and related color-critical graphs; see, for example, \cite{Liu-Li-Li-Yu,JoyentanujYamini,LiZhaiShu,LiZhaoZou,ZhaiLinShu,ZhangWang}. For a systematic account of the research progress in this area, we refer the reader to a recent nice survey~\cite{YZZ}.

A $k$-wheel is the graph obtained from joining every vertex of a
$(k-1)$-cycle with an additional central vertex. Since
$W_{2k+1}=K_1\vee C_{2k}$, a graph $G$ is $W_{2k+1}$-free if and only if
$G[N_G(v)]$ is $C_{2k}$-free for every $v\in V(G)$. A $k$-fan is defined as
$F_k=K_1\vee P_{k-1}$.

Yu, Li, and Peng~\cite{YLP} proposed three conjectures concerning the
fixed-size spectral extremal problem for fan graphs and wheel graphs separately.
\begin{conjecture}[Yu-Li-Peng~\cite{YLP}]
\label{conj:1.1}
For fixed $k\ge 2$ and sufficiently large $m$, if $G$ is an $F_{2k+1}$-free
or $F_{2k+2}$-free graph with $m$ edges, then
$\rho(G)\le \frac{k-1+\sqrt{4m-k^{2}+1}}{2}$. Equality holds if and only if
\(
 G\cong K_k\vee qK_1,$ and $m=\binom{k}{2}+kq,
\)
up to isolated vertices.
\end{conjecture}

\begin{conjecture}[Yu-Li-Peng~\cite{YLP}]
\label{conj:1.2}
For fixed $k\ge 2$ and sufficiently large $m$, if $G$ is a $W_{2k+1}$-free
graph with $m$ edges, then $\rho(G)\le \frac{k-1+\sqrt{4m-k^{2}+1}}{2}$. Equality holds if and only if
$G\cong K_k\vee qK_1,$   and  $m=\binom{k}{2}+kq,$
up to isolated vertices.
\end{conjecture}

\begin{conjecture}[Yu-Li-Peng~\cite{YLP}]
\label{conj:1.3}
For fixed $k\ge 2$ and sufficiently large $m$, if $G$ is a $W_{2k+2}$-free
graph with $m$ edges, then $\rho(G)\le \sqrt{4m/3}$. Equality holds if and
only if $G$ is a regular complete $3$-partite graph.
\end{conjecture}

Conjecture~\ref{conj:1.1} was confirmed by Li, Zhao and Zou~\cite{LiZhaoZou} for $k\ge 3$, Gao and Li~\cite{GaoLi} confirmed the case $k=2$. Consequently, they also resolved the corresponding problem for friendship
graphs. Conjecture~\ref{conj:1.3} was settled affirmatively by Li, Liu and
Zhang~\cite{LiLiuZhangStability}. Very recently, 
Fang, Zhai and Zhang~\cite{FangZhaiZhang} established the following sharp universal bound.
\begin{theorem}[Fang--Zhai--Zhang \cite{FangZhaiZhang}]
\label{thm:FZZ}
Fix $k\ge3$.  Every sufficiently large $m$-edge
$W_{2k+1}$-free graph $G$ satisfies
\(
 \rho(G)^2-(k-1)\rho(G)\le m-\binom{k}{2}.
\)
Equality holds if and only if
\(
 G\cong K_k\vee qK_1\), $and$ \(m=\binom{k}{2}+kq,
\)
up to isolated vertices.
\end{theorem}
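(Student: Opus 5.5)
We follow the now-standard "dense core" strategy for fixed-size spectral problems. Let $G$ be an extremal $W_{2k+1}$-free graph with $m$ edges maximizing $\rho=\rho(G)$. Let $x$ be the Perron vector, normalized so that $x_{u^*}=\max_v x_v=1$. The comparison graph $K_k\vee qK_1$ (or a slight modification of it) gives $\rho^2-(k-1)\rho\ge m-\binom{k}{2}-O(1)$, so $\rho=\sqrt m+\frac{k-1}{2}+o(1)$. The argument then proceeds in three steps.

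\emph{Step 1: the eigen-equation identity.} We use
\[
\rho^2-(k-1)\rho=\sum_{v} d(v)x_v-(k-1)\sum_{v\in N(u^*)}x_v+\sum_{v\in N(u^*)}\Bigl(\sum_{w\in N(u^*)\cap N(v)}x_w\Bigr),
\]
which follows from $\rho^2 x_{u^*}=\sum_{v\sim u^*}\sum_{w\sim v}x_w$. Writing $A=N(u^*)$, $B=V\setminus(A\cup\{u^*\})$, this becomes
\[
\rho^2-(k-1)\rho=d(u^*)+\sum_{v\in A}\bigl(d_A(v)-(k-1)\bigr)x_v+\sum_{\text{edges in }B\text{ or }A\text{--}B}(\ldots)\le m-\binom k2,
\]
and the goal is to show this upper bound. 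The $W_{2k+1}$-free condition says $G[A]$ is $C_{2k}$-free. For any $C_{2k}$-free graph $H$ we have $e(H)=O(|H|^{1+1/k})$. More precisely, Erd\H{o}s–Gallai / path-type arguments show that $H$ decomposes into a bounded set of high-degree vertices plus a sparse remainder.

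\emph{Step 2: locate the dense core $L$.} Let $L=\{v: x_v\ge \varepsilon\}$. Standard counting ($\rho x_v\le \sum x$, together with $\sum_v x_v\le$ roughly $2m/\rho$) gives $|L|=O(\sqrt m)$ and $e(L)$ small. We then show that vertices with $x_v$ close to $1$ number at most $k$ and have degree about $\sqrt m$ each. The key structural claim is that there is a set $K$ of exactly $k$ vertices, each with $x_v\ge 1-o(1)$, whose common neighbourhood contains almost all vertices. Moreover, if $G[A]$ contained a vertex of $d_A\ge k$ many "heavy" neighbours beyond $k-1$, we could build a $C_{2k}$ in $G[A]$: alternate between $k$ heavy vertices and their common low-weight neighbours (there are many, since they share $\Omega(\sqrt m)$ common neighbours). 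Thus any $k+1$ heavy vertices in $N(u^*)$ with large common neighbourhoods yield $C_{2k}$ in $G[A]$, which is a contradiction. This bounds $\sum_{v\in A}(d_A(v)-(k-1))x_v$ by $\binom{k}{2}$-type corrections minus the edges lost.

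\emph{Step 3: exact accounting.} We compare $\sum_{v\in A}d_A(v)x_v$ with $2e(A)$ and with the edges outside $A$. Every edge not counted with weight $1$ costs a deficit, while every vertex of $A$ adjacent to fewer than $k-1$ heavy vertices gains $<1$. Summing gives $\rho^2-(k-1)\rho\le m-\binom{k}{2}-\delta$, where $\delta\ge0$, and $\delta=0$ forces the following: every edge touches $K$, $K$ is a clique, and all other vertices are adjacent exactly to $K$ with $x_v=x$ uniform. This means $G\cong K_k\vee qK_1$ (up to isolated vertices), and hence $m-\binom k2=kq$.

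The main obstacle is Step 2's exclusion of "many moderately heavy vertices": weights $x_v\in(\varepsilon,1-\varepsilon)$ might spread over many vertices and escape the clean $C_{2k}$ construction. We would first try an iterative refinement. We bound $\sum_{v\in L}x_v$ via $e(G[A])$ being $C_{2k}$-free (Bondy–Simonovits $e\le c\,n^{1+1/k}$ applied to $G[A\cap L]$ with $|A\cap L|=O(\sqrt m)$, giving $o(m)$ edges). We then bootstrap the lower bound on $\rho$ to force each remaining heavy vertex to have $x_v\to1$ and degree $\sim\sqrt m$. We also use $k\ge3$, which is exactly where the Bondy–Simonovits exponent is small enough for this counting to close.
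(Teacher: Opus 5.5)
\textbf{There is a genuine gap at Step~2.} The paper does not prove this statement; it quotes it from Fang--Zhai--Zhang. Section~3, however, replays their framework. Measured against that framework, your ``key structural claim'' that $k$ vertices with $x_v\ge 1-o(1)$ dominate almost everything is essentially the whole theorem, and nothing you propose can establish it.

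Bondy--Simonovits on $G[N(u^*)\cap L]$, and the $C_{2k}$ built from $k$ heavy vertices inside $N(u^*)$, say nothing about the near-balanced regime. There $G$ is $o(m)$-close to $K_{a,b}$ with $a\asymp b\asymp\sqrt m$, $N(u^*)$ is essentially one part, $G[N(u^*)]$ has bounded degree, and $\Theta(\sqrt m)$ vertices have $x_v\approx x_{u^*}$ (so your ``$e(L)$ small'' also fails there).

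This regime is exactly what separates $k\ge3$ from $k=2$. Take $K_{a,a}$ with a perfect matching added inside each part. It is $W_5$-free, since a $C_4$ in $N(v)$ would need two adjacent edges inside a matching. It is $(a+1)$-regular with $m=a^2+a$, so $\rho^2-\rho=m>m-\binom{2}{2}$. Your sketch never uses $k\ge3$ in a way that fails for $k=2$: Bondy--Simonovits gives $o(m)$ edges for $C_4$ as well, so the reason you give for $k\ge3$ is not the operative one. As written, the sketch would equally ``prove'' a false statement.

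What is missing is the actual machinery:
\begin{enumerate}[label=\rm(\arabic*)]
\item an edge-spectral stability theorem giving $\dist(G,K_{A,B})=o(m)$ (Lemma~\ref{lem:stability});
\item a coordinate lower bound such as $2hx_u^2\ge(1-20\varepsilon)d(u)-8k$, which comes from a genuine $(\varepsilon,k)$-dense core (you name the dense-core strategy but never define or use a core);
\item in the balanced case, the bound $\rho\le\sqrt h+\alpha(\frac12-\frac1{4t})+\beta(\frac12-\frac t4)+O_k(\eta^2)+o(1)$, forcing both internal average degrees to be near $k-1$;
\item the explicit cycle $u_1p_1q_1u_2z_2\cdots u_{k-1}p_2q_2u_1$ inside $N(u_0)$, built from two disjoint edges of the common neighbourhood. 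This needs $k-1\ge2$ internal neighbours and is where $k\ge3$ is genuinely used (Lemma~\ref{lem:no-balanced}).
\end{enumerate}

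\textbf{Step~3 is only asserted, and the picture behind it is off.} In $K_k\vee qK_1$ the heavy vertices have degree $q+k-1\approx m/k$, not $\sqrt m$. Moreover, ``at most $k$ heavy vertices'' is not a local consequence of $W_{2k+1}$-freeness. Consider $K_{a,b}$ with $a/k$ disjoint copies of $K_k$ placed on the small side. It is $W_{2k+1}$-free, all $a$ small-side vertices have $x_v=x_{u^*}$, and $\rho^2-(k-1)\rho=m-\frac{(k-1)a}{2}$. So for $k\ll a=o(\sqrt m)$ it has $\rho=\sqrt m+\frac{k-1}{2}-o(1)$.

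Its $a-k$ extra heavy vertices lie outside $N[u^*]$, so your $C_{2k}$ argument does not touch them. They are excluded only by the terms $f(w)\ge\frac12 d_W(w)x_{u^*}$, with $W=V\setminus N[u^*]$, in the exact residual identity of Lemma~\ref{lem:residual} with $d=k-1$ and $c=\binom{k}{2}$. In that identity a term $(k-1-|L|)\rho x_{u^*}$ must also force exactly $k-1$ heavy neighbours of $u^*$. In addition, the errors $\rho(x_{u^*}-x_v)$ for $v\in L$ must be controlled to $O(1)$ accuracy.

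\textbf{Two smaller errors.}
\begin{enumerate}[label=\rm(\roman*)]
\item Your first displayed identity is wrong: $\sum_v d(v)x_v=\rho\sum_v x_v$, not $\rho^2x_{u^*}$. The correct form is your second display, with $\sum_{w\notin N[u^*]}d_{N(u^*)}(w)x_w$ as the last sum.
\item The comparison graph yields only the lower bound $\rho\ge\sqrt m+\frac{k-1}{2}-O(m^{-1/2})$, not the two-sided estimate you state.
\end{enumerate}
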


Note that when $m-\binom{k}{2}$ is not divisible by $k$, equality in
Theorem~\ref{thm:FZZ} is impossible.  Fang, Zhai and Zhang~\cite{FangZhaiZhang} explicitly
asked for the exact maximum in these nonzero residue classes. Interestingly, it so happens that Yu, Zhang and Zhang~\cite{YZZ} proposed a conjecture for this open problem as follows.
\begin{conjecture}[Yu-Zhang-Zhang~\cite{YZZ}]\label{conj:1.4}
Let $k\ge 2$ be fixed and $m$ be sufficiently large, and let $G$ be a
$W_{2k+1}$-free graph with $m$ edges.
If $m=\binom{k}{2}+kq+r$ with $1\le r\le k-1$, then
\(
\rho(G)\le \rho(S_{k,m}),
\)
equality holds if and only if
$G\cong S_{k,m}$, where $S_{k,m}$ is obtained from $K_k\vee qK_1$ by adding a vertex $z$ and joining it to exactly $r$ vertices of the
$K_k$.
\end{conjecture}

In this paper, we resolve Conjecture~\ref{conj:1.4} for $k\ge 3.$ The $k=2$ case is by far the more involved one: its extremal graphs depend
genuinely on the arithmetic of $m$, and the second-order analysis requires quantizing two matching deficiencies. To this end, we will specifically treat the case $k=2$ in a standalone article~\cite{LGY}. Our main contribution of this paper can be formulated as follows:
\begin{theorem}
\label{thm:main}
For every fixed integer $k\ge3$, there exists $m_0=m_0(k)$ such that
the following holds.  If $m\ge m_0$ and $G$ is an $m$-edge
$W_{2k+1}$-free connected graph, then
\(
 \rho(G)\le \rho(S_{k,m}).
\)
Equality holds if and only if $G\cong S_{k,m}$.
\end{theorem}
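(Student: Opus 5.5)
We argue by taking an extremal graph and showing that its structure is forced to be that of $S_{k,m}$. Let $G$ be a $W_{2k+1}$-free connected graph with $m$ edges that maximizes $\rho(G)$. Write $\rho=\rho(G)$ and let $x$ be its Perron vector, normalized so that $\max_v x_v=x_{u^*}=1$. Since $S_{k,m}$ is $W_{2k+1}$-free, we get a lower bound $\rho\ge\rho(S_{k,m})$. A direct computation, using the equitable partition of $S_{k,m}$ into the $r$ clique vertices adjacent to $z$, the other $k-r$ clique vertices, the $q$ independent vertices, and $z$, shows
\[
\rho(S_{k,m})^2-(k-1)\rho(S_{k,m})> m-\tbinom{k}{2}-r+\frac{c\,r}{\sqrt m}
\]
for some constant $c=c(k)>0$. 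The upper bound from Theorem~\ref{thm:FZZ} then places $\rho^2-(k-1)\rho$ inside a window of width $O(1)$ just below $m-\binom k2$.

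\textbf{Step 1: dense core.} We use the standard fixed-size method, as in Fang--Zhai--Zhang. Starting from
\[
\rho^2 x_{u}=\sum_{v}|N(u)\cap N(v)|\,x_v,
\]
and using that every $G[N(v)]$ is $C_{2k}$-free, so that $e(G[N(v)])\le \mathrm{ex}(d(v),C_{2k})=O(d(v)^{1+1/k})$, we show the following. There is a set $L$ of exactly $k$ vertices, each with $x_v\ge 1-\epsilon$ and degree $(1-o(1))m/k$. Every other vertex has $x_v=O(1/\sqrt m)$, and $L$ induces $K_k$. Almost all remaining vertices are adjacent to all of $L$; together they form a set $Q$ of \emph{independent twins}. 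The independence of $Q$ comes from $W_{2k+1}$-freeness: if $Q$ contained an edge $ab$, then $L\cup\{a,b,\dots\}$ would give a $C_{2k}$ in the neighborhood of a vertex of $L$ once $k\ge3$ (we route the cycle through $L$ and $Q$). Bounding the eigen-equation error terms shows that the remaining set $R=V\setminus(L\cup Q)$ spans $O(1)$ edges, and that each vertex in $R$ has $x_v=O(1/\sqrt m)$.

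\textbf{Step 2: second-order analysis.} Put $s=e(G)-e(K_k\vee |Q|K_1)$, the number of edges not in the core join; then $s\equiv r \pmod k$ and $s=O(1)$. We expand $\rho$ to second order:
\[
\rho^2-(k-1)\rho=\binom{}{}\!\!m-\tbinom k2 - s+\frac{1}{\sqrt m}\Big(\textstyle\sum_{e\notin\text{core}}\text{contribution}\Big)+O(1/m).
\]
The first-order term forces $s$ to be minimal, i.e.\ $s=r$: if $s\ge r+k$, then $\rho<\rho(S_{k,m})$. With exactly $r$ extra edges, the gain at order $1/\sqrt m$ is maximized only when all $r$ edges are incident to $L$, since the entries of $x$ on $L$ are $\approx1$ and elsewhere they are $O(1/\sqrt m)$. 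Connectivity and $W_{2k+1}$-freeness then leave two options: a single new vertex $z$ joined to $r$ vertices of $L$, or configurations in which vertices of $Q$ miss some of $L$. We compare these through their quotient matrices, or through the identity $\rho(G)-\rho(G')=\frac{x^T(A-A')y}{x^Ty}$ applied to each local rearrangement. Each alternative loses a positive amount at order $1/\sqrt m$ or $1/m$. For example, deleting an edge between $L$ and $Q$ and adding an edge at a fresh vertex decreases $\rho$ by about $\rho^{-2}$, because $x_Q\approx \frac{k}{\rho}$ exceeds $x_z\approx\frac{r}{\rho}$.

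\textbf{Main obstacle.} The hardest part is Step~2's exclusion of near-extremal competitors whose spectral radii agree with $\rho(S_{k,m})$ up to order $1/\sqrt m$. Examples are graphs in which $z$ is joined to $r$ vertices of $L$ together with some $Q$-vertices, or graphs in which several deficient $Q$-vertices replace $z$. To handle these, we will compute the exact characteristic polynomials of the equitable quotients. We then compare the largest roots by evaluating one polynomial at the other's root, keeping terms to order $m^{-1}$. Separately, we must check that $W_{2k+1}$-freeness (which needs $k\ge3$) rules out edges inside $Q\cup\{z\}$ that would otherwise be competitive.
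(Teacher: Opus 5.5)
Your proposal has a genuine gap. The most serious part is Step~2, where the expansion is off by a full order. The edges outside the join $K_k\vee|Q|K_1$ do not affect $\rho^2-(k-1)\rho$ only at order $m^{-1/2}$. An edge $vz$ moves $\rho$ by $\Theta(m^{-1/2})$, hence moves $\rho^2-(k-1)\rho$ by $\Theta(1)$: you dropped the factor $2\rho$. Concretely, a vertex $z$ with $d$ neighbors in the clique adds $d$ to $m$. Since $x_z\approx d\,x_{u^*}/\rho$, it adds $d^2/k+o(1)$ to $\rho^2-(k-1)\rho$. So its net contribution to $\rho^2-(k-1)\rho-m+c_k$ is $-d(k-d)/k$, not $-d$.

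Already for the candidate, \eqref{eq:candidate-residual} gives $-r(k-r)/k+O(m^{-1/2})$, where your formula predicts $-r+O(m^{-1/2})$. In your setting (clique $L$, twins $Q$, bounded rest $R$) the true constant term is $-\frac1k\sum_{z\in R}d_L(z)\bigl(k-d_L(z)\bigr)-e(R)=-s+\frac1k\sum_{z\in R}d_L(z)^2$. Hence ``first order forces $s=r$'' is not a valid mechanism. Competitors with equal $s$ are already separated at this order, i.e.\ at order $m^{-1/2}$ in $\rho$. For example, one $z$ with $d_z=r$ costs $r(k-r)/k$, while $r$ pendant vertices, each joined to a single clique vertex, cost $r(k-1)/k$. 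Likewise, the first-order Rayleigh change of your sample edge move is $\approx-(k-r)/(k\rho)$, not of order $\rho^{-2}$.

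Moreover, $W_{2k+1}$-freeness does not make $Q$ independent. Take $K_k\vee qK_1$ plus one edge inside $Q$. A cycle in $N(v)$ with $v\in L$ has at most $k-1$ clique vertices, hence at most $k$ vertices of $Q$, and $|N(w)|\le k+1<2k$ for $w\in Q$. This graph lies in the class $r=1$ and loses to $S_{k,m}$ by only $1/k$ in the constant term, so it must be excluded spectrally. What is missing is a second-order formula valid uniformly for all bounded configurations: the resolvent expansion of Lemma~\ref{lem:resolvent} around the dominant twin neighborhood $T$. This must be combined with the modular defect inequality $\sum_z d_z(k-d_z)+k\,e\ge r(k-r)$ under $\sum_z d_z+e\equiv r\pmod k$, together with its equality case (Lemma~\ref{lem:modular}). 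Comparing quotient polynomials of a few named competitors cannot replace this.

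Step~1 is asserted rather than proved, and the tool you name cannot deliver it. The heavy vertices have degree $\Theta(m)$, and at that scale the bound $O(d^{1+1/k})$ on $\mathrm{ex}(d,C_{2k})$ is far larger than $m$, so it says nothing. Two regimes reach $\rho=\sqrt m+\frac{k-1}{2}+O(m^{-1/2})$ and must be disposed of before a $K_k$ core can be asserted.

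The first is the near-balanced regime. $K_{a,a}$ with a $(k-1)$-regular graph inside each side has $\rho=a+k-1$, hence $\rho^2-(k-1)\rho=m$, which beats $S_{k,m}$. It is ruled out only because it contains $W_{2k+1}$. Showing that \emph{every} near-balanced graph with $\rho\ge\rho(S_{k,m})$ contains a wheel needs the stability theorem, Perron localization, $\Delta(H[A]),\Delta(H[B])\le k-1$, and an explicit $C_{2k}$ in a neighborhood (Lemma~\ref{lem:no-balanced}).

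The second is split graphs whose dominant type is too large. An example is the $W_{2k+1}$-free graph $2K_k\vee nK_1$, with one extra vertex joined to a few clique vertices to fix the residue. It has $2k$ heavy vertices and is beaten only at the $m^{-1/2}$ coefficient (Lemma~\ref{lem:T-clique}). So ``exactly $k$ heavy vertices inducing $K_k$'' is an output of the second-order analysis, not of a first-order step.

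Finally, suppose Step~1 means the Fang--Zhai--Zhang dense-core iteration. Arbitrary deletions move $h$ across residue classes, where $s_k(h)/\sqrt h$ jumps by $\Theta(h^{-1})$. This can exceed the core gain $\varepsilon(m-h)/(2m)$ when $m-h$ is bounded, which is why the paper deletes only multiples of $k$ edges (Lemma~\ref{lem:core-extraction}). If instead you work directly with the extremal $G$, you need a substitute for the core's Perron lower bounds and light-edge control (Lemmas~\ref{lem:light-edges} and~\ref{lem:degree-coordinate}), and the proposal does not indicate one.
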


\noindent{\bf Our approach.}\ \ Our proof strategy is as follows. To prove Theorem~\ref{thm:main}, we proceed in three steps.

First, the ordinary dense-core iteration cannot be applied directly to
$\rho(S_{k,m})$: after dividing by $\sqrt m$, the coefficient of $m^{-1}$
depends on $m-\binom{k}{2}\pmod k$, so one-edge deletions create oscillations of
the same order as the desired gain.  We introduce a
\emph{congruence-preserving core}, in which every allowed deletion has
size divisible by $k$.

Second, we combine the resulting Perron estimates with the stability and
residual framework of~\cite{FangZhaiZhang}.  A bounded number of light
edges may survive; this is essential because the partially adjacent
vertex of $S_{k,m}$ itself may be incident with light edges.  We show
that the graph nevertheless consists of a bounded set $K$ and an
independent set $K'$ satisfying $N(K')\subseteq K$.

Third, we partition $K'$ into independent twin classes.  An exact matrix
identity forces all but $O_k(1)$ vertices of $K'$ into one neighborhood
type.  A resolvent expansion then converts the problem to a finite
optimization.  The final residue comparison follows from the elementary
modular inequality; see Lemma~\ref{lem:modular} below.\vspace{2mm}

\noindent{\bf Organization.}\ \ In Section \ref{s2}, we give some preliminaries. In Section \ref{s3}, we give the proof of Theorem~\ref{thm:main}, which consists of the candidate and its spectral threshold, congruence-preserving dense cores, stability partition and Perron localization, the bounded-core reduction, and the finite-dimensional closure. Some concluding remarks are given in the last section.

\section{Preliminaries}\label{s2}

For $S\subseteq V(G)$, write $e(S)=e(G[S])$.  For two disjoint vertex subsets $S,T\subseteq V(G)$, let
\(
E(S,T)=\{uv\in E(G)\mid u\in S,\,v\in T\},
\)
and $e(S,T)=|E(S,T)|$ denote the number of edges between $S$ and $T$. We use
$d_S(v)=|N(v)\cap S|$, and  use $\Delta(G)$ to denote maximum degree of $G.$
Let \(\boldsymbol{x}\) be the chosen unite perron vector and \(x_{u^*}=\max_{v\in V(H)}x_v\).
We have the following Perron mass stability result.

As usual,
we use $K_n$ and $C_n$ to denote the complete graph and cycle on $n$ vertices, respectively.

Let $G$ be a connected simple graph of order $n$, and let $A$ be its adjacency matrix.
Let $\rho=\rho(A)$ be the spectral radius, i.e., the largest real eigenvalue of $A$.
From the Perron--Frobenius theorem for irreducible non-negative matrices,
there exists a real vector $\boldsymbol{x}\in\mathbb{R}^n$ with all entries positive satisfying
\(
A\boldsymbol{x} = \rho\,\boldsymbol{x}.
\)
This positive eigenvector $\boldsymbol{x}$ associated with $\rho$ is called the \emph{Perron vector} of graph $G$.
As usual, a nonnegative unit eigenvector belonging to
$\rho(G)$ is denoted by $\boldsymbol{x}=(x_v)_{v\in V(G)}$.  If $G$ is disconnected,
we choose $\boldsymbol{x}$ positive on a component attaining $\rho(G)$ and zero on
all other components.

For two disjoint vertex sets $A, B,$ we write $K_{A,B}$ for the complete bipartite graph on the parts $A$
and $B$.
We use the following consequence of edge-spectral stability result
\cite{LiLiuZhangStability}.
\begin{lemma}[\cite{LiLiuZhangStability}]
\label{lem:stability}
Fix a graph $F$ with $\chi(F)=3$.  If $G$ is an $F$-free graph with
$m\to\infty$ edges and there exists a constant \(\delta>0\) such that
\(
 \rho(G)\geq\sqrt{ (1-\delta)m},
\)
then there are disjoint sets $A,B\subseteq V(G)$ such that
\(
 \dist(G,K_{A,B})=o(m).
\)
Moreover, \(\rho(G)\leq \sqrt{(1+o(1))m}\). Here the distance is the number of edge additions and deletions required
to transform one graph into the other, with all remaining vertices
isolated.
\end{lemma}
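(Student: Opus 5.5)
Since this lemma is quoted from~\cite{LiLiuZhangStability}, we only sketch how one would derive it. The plan has three stages: first count triangles, then remove them cheaply, then invoke spectral stability for triangle-free graphs. Fix $t=|V(F)|$. Since $\chi(F)=3$, $F$ is a subgraph of the balanced blow-up $K_3(t)$, so $G$ is $K_3(t)$-free.

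\textbf{Step 1 (few triangles).} We aim to show that a $K_3(t)$-free graph with $m$ edges has $o(m^{3/2})$ triangles. To make the problem dense, split $V(G)$ into two parts. Let $H$ be the set of vertices of degree at least $\varepsilon\sqrt m$; then $|H|\le 2\sqrt m/\varepsilon$. Every triangle meeting a vertex of $V(G)\setminus H$ can be charged to an edge at that vertex together with its third vertex, so there are at most $m\cdot\varepsilon\sqrt m$ such triangles. Inside $H$ we are working on $O(\sqrt m)$ vertices. If $G[H]$ had $c\,|H|^3$ triangles, the supersaturation/removal lemma for $K_3(t)$ (Erd\H{o}s's hypergraph argument on the triangle $3$-graph) would produce a copy of $K_3(t)$. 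Hence the number of triangles in $H$ is $o(|H|^3)=o(m^{3/2})$. Letting $\varepsilon\to0$ gives $t_3(G)=o(m^{3/2})$.

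\textbf{Step 2 (upper bound).} We use Nikiforov's inequality $3t_3(G)\ge \rho(\rho^2-m)$, which follows from $\rho^3=\sum_v x_v\sum_{u\sim v}\rho x_u$ together with Cauchy--Schwarz and $\sum_{u\sim v}x_u^2\le\ldots$. Suppose $\rho\ge\sqrt{(1-\delta)m}$, so that $\rho=\Theta(\sqrt m)$. Then Step~1 gives $\rho^2\le m+o(m^{3/2})/\rho=(1+o(1))m$, which is the ``moreover'' part of the lemma.

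\textbf{Step 3 (structure).} By the triangle removal lemma applied inside $G[H]$, and after deleting all edges incident with $V(G)\setminus H$ that lie in triangles, we delete $o(m)$ edges and obtain a triangle-free graph $G'$. Controlling this second deletion is part of the main obstacle discussed below. Since deleting $o(m)$ edges changes $\rho^2$ by at most $o(m)$ (use $\rho(G)\le\rho(G')+\rho(G-G')$ and $\rho(G-G')\le\sqrt{e(G-G')}$), we get $\rho(G')^2\ge(1-\delta-o(1))m$. Nosal gives $\rho(G')^2\le e(G')$, so $G'$ is a triangle-free graph that is nearly extremal for Nosal's bound. We then apply edge-spectral stability for triangle-free graphs (Nikiforov's theorem, for $\delta$ small): for a Perron vector $\boldsymbol{x}$ of $G'$ with $\rho(G')^2\ge(1-\delta')e(G')$, the neighborhoods of the top Perron vertex $u^*$ and of its neighbors form parts $A,B$ with $\dist(G',K_{A,B})\le\eta(\delta')m$. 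Combining this with the $o(m)$ deleted edges yields $\dist(G,K_{A,B})=o(m)$, where the $o(m)$ is read as $\eta(\delta)\,m$ with $\eta(\delta)\to0$ as $\delta\to0$.

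\textbf{Main obstacle.} The hardest step is the sparse-to-dense reduction in Steps~1 and~3. The removal lemma only applies on the $O(\sqrt m)$ high-degree vertices, and we must argue that the low-degree part carries both $o(m)$ of the relevant Perron mass and $o(m)$ of the triangle-edges. To handle this, I would first bound $\sum_{v\notin H}x_v^2$ using $\rho x_v\le\sqrt{d(v)}$, and then restrict everything to $G[H]$ plus edges from $H$ to its complement. The argument then rests on the fact that a near-bipartite structure can only lose $o(m)$ edges.
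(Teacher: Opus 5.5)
The paper does not prove this lemma; it imports it from \cite{LiLiuZhangStability}, so your sketch has to stand on its own. Steps~1 and~2 are sound. The high/low-degree split together with Erd\H{o}s's theorem on $K^{(3)}(t,t,t)$ gives $t_3(G)=o(m^{3/2})$. Nikiforov's inequality $3t_3(G)\ge\rho(\rho^2-m)$ then gives the ``moreover'' part; the cleanest proof of that inequality is $6t_3=\sum_i\lambda_i^3\ge\lambda_1^3-\lambda_1(2m-\lambda_1^2)$. You are also right that the conclusion must be read as $\dist(G,K_{A,B})\le\eta(\delta)m$ with $\eta(\delta)\to0$. For fixed $\delta$ the statement is false: the bipartite graph $K_{a,a}\cup K_{b,b}$ with $b^2=\delta a^2$ is a counterexample. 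The paper only uses the structural part with $\rho\ge\sqrt h$.

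The genuine gap is Step~3: deleting every triangle edge at a low-degree vertex is not an $\eta(\delta)m$ operation, even on this paper's extremal graphs.
\begin{itemize}
\item In $K_k\vee qK_1$, which is $W_{2k+1}$-free with $\rho^2>m$, the set $H$ is the $k$-clique. All $kq=m-\binom k2$ edges at low-degree vertices lie in triangles, so you would delete essentially the whole graph, whereas deleting the $\binom k2$ edges inside $H$ suffices.
\item Nor can you always delete the high--high edge instead. Take $F=K_{2,2,2}$, cut both sides of $K_{a,a}$ into blocks of size $\varepsilon a/3$, and for every pair of blocks $(S,T)$ add a vertex joined to $S\cup T$. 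This graph is $K_{2,2,2}$-free with $\rho^2\ge(1-o(1))m$, and every edge of $K_{a,a}$ lies in a low--high--high triangle. Yet removing the $O(a/\varepsilon)=o(m)$ new edges suffices.
\end{itemize}
So the missing ingredient is an argument that chooses, triangle by triangle, the cheap side at total cost $\eta(\delta)m$.

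Triangle counts cannot provide this, because sparse removal fails: a disjoint union of triangles has $o(m^{3/2})$ triangles but needs $m/3$ deletions. The spectral hypothesis has to be used at exactly this point, and that is the real content of the cited theorem.

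Your proposed remedy does not supply it. Bounding $\sum_{v\notin H}x_v^2$ via $\rho x_v\le\sqrt{d(v)}$ cannot give $o(1)$, since in $K_k\vee qK_1$ (or in a star) the low-degree vertices carry about half of the Perron mass. Your closing appeal to ``a near-bipartite structure'' assumes the conclusion. A minor point: in general one only has $\rho(G-G')\le\sqrt{2e(G-G')}$, which is still enough for your purpose.

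One way to avoid removal entirely is to push your trace computation further. The identities $\sum_i\lambda_i^3=6t_3=o(m^{3/2})$ and $\sum_i\lambda_i^2=2m$ force $\lambda_{\min}\le-(1-O(\delta)-o(1))\sqrt m$. With $\boldsymbol z$ a unit eigenvector for $\lambda_{\min}$, this gives $\|A(G)-\lambda_1\boldsymbol x\boldsymbol x^{\mathsf T}-\lambda_{\min}\boldsymbol z\boldsymbol z^{\mathsf T}\|_F^2=O(\delta m)+o(m)$. What remains is to round this rank-two approximation to some $K_{A,B}$.
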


The next exact identity is the residual identity used in
\cite{FangZhaiZhang,ZhaiLiLou}.  We include its short derivation.

\begin{lemma}
\label{lem:residual}
Let $G$ have $m$ edges and spectral radius $\rho$, and choose
$u^*$ with $x_{u^*}=\max_{v\in V(G)}x_v$.  Put
\(
 U=N(u^*), W=V(G)\setminus N[u^*]
\)
and
\(
 f(w)=d_U(w)(x_{u^*}-x_w)+\frac12d_W(w)x_{u^*}\) for \(w\in W\).
Then, for arbitrary real constants $c,d$, one has
\begin{align}
 (\rho^2-d\rho+c-e(G))x_{u^*}
 &=\sum_{uv\in E(U)}(x_u+x_v-x_{u^*})
 -(d\rho-c)x_{u^*} -\sum_{w\in W}f(w).
\label{eq:residual-general}
\end{align}
\end{lemma}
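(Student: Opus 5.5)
We prove the identity \eqref{eq:residual-general} by computing $\rho^2x_{u^*}$ through two applications of the eigen-equation $A\boldsymbol{x}=\rho\boldsymbol{x}$ at $u^*$, and then grouping the result against $e(G)$. The constants $c,d$ play no real role: once we have
\[
(\rho^2-e(G))x_{u^*}=\sum_{uv\in E(U)}(x_u+x_v-x_{u^*})-\sum_{w\in W}f(w),
\]
we subtract $(d\rho-c)x_{u^*}$ from both sides to get the general form. So it suffices to prove the identity with $c=d=0$.

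\emph{Expanding $\rho^2 x_{u^*}$.} From $\rho x_{u^*}=\sum_{u\in U}x_u$ and $\rho x_u=\sum_{v\sim u}x_v$ we get
\[
\rho^2x_{u^*}=\sum_{u\in U}\sum_{v\in N(u)}x_v=|U|\,x_{u^*}+\sum_{u\in U}\sum_{v\in N(u)\cap U}x_v+\sum_{u\in U}\sum_{w\in N(u)\cap W}x_w .
\]
Here we split the neighbours of each $u\in U$ into $u^*$, vertices of $U$, and vertices of $W$; no neighbour of $u$ lies elsewhere, since $V(G)=\{u^*\}\cup U\cup W$. We then rewrite the two double sums. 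The middle sum counts each edge $uv\in E(U)$ twice, once from each endpoint, so it equals $\sum_{uv\in E(U)}(x_u+x_v)$. The last sum equals $\sum_{w\in W}d_U(w)x_w$.

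\emph{Decomposing the edge count.} Every edge of $G$ falls into exactly one of five classes: edges at $u^*$, edges inside $U$, edges between $U$ and $W$, and edges inside $W$. (There are no edges between $u^*$ and $W$, because $W$ lies outside $N[u^*]$.) This gives
\[
e(G)=|U|+e(U)+e(U,W)+e(W)=|U|+e(U)+\sum_{w\in W}d_U(w)+\tfrac12\sum_{w\in W}d_W(w).
\]
We multiply this by $x_{u^*}$ and subtract it from the expansion of $\rho^2x_{u^*}$. The $|U|x_{u^*}$ terms cancel. The $E(U)$ terms combine into $\sum_{uv\in E(U)}(x_u+x_v-x_{u^*})$. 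The $W$-terms give
\[
\sum_{w\in W}\Bigl(d_U(w)x_w-d_U(w)x_{u^*}-\tfrac12 d_W(w)x_{u^*}\Bigr)=-\sum_{w\in W}f(w),
\]
which is exactly the definition of $f$.

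There is no genuine obstacle here; the argument is pure bookkeeping. The only point requiring care is the edge partition, in particular that $e(W)=\tfrac12\sum_{w\in W}d_W(w)$ and that no edge joins $u^*$ to $W$. Note also that the maximality of $x_{u^*}$ is not used for the identity itself. It matters only later, where it makes every $f(w)\ge0$, and we would remark on this after the proof.
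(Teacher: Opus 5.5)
Your proposal is correct and follows essentially the same route as the paper: expand $\rho^2x_{u^*}$ by the two-step eigen-equation, use $e(G)=|U|+e(U)+e(U,W)+e(W)$ with $e(W)=\tfrac12\sum_{w\in W}d_W(w)$, and then subtract $(d\rho-c)x_{u^*}$ from both sides. One cosmetic slip: you announce five edge classes but list four, and four is the correct number. Also, the paper's proof says to subtract $(d\rho-c+h)x_{u^*}$; the stray $h$ there is a typo, and your subtraction of $(d\rho-c)x_{u^*}$ is the correct one.
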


\begin{proof}
The two-step eigen-equation at $u^*$ gives
\begin{equation}\label{e:002}
 \rho^2x_{u^*}=|U|x_{u^*}+
 \sum_{uv\in E(U)}(x_u+x_v)+
 \sum_{w\in W}d_U(w)x_w.
\end{equation}
Since
\(
 |U|+e(U,W)+e(W)=e(G)-e(U),
\) \(e(U,W)=\sum_{w\in W}{d_U(w)}\) and \(e(W)=\frac{1}{2}\sum_{w\in W}{d_W(w)}\),
\eqref{e:002} is equivalent to
\[
 \rho^2x_{u^*}=e(G)x_{u^*}+
 \sum_{uv\in E(U)}(x_u+x_v-x_{u^*})-
 \sum_{w\in W}f(w).
\]
Subtracting $(d\rho-c+h)x_{u^*}$ proves
\eqref{eq:residual-general}.
\end{proof}

For any real symmetric matrix $M$, its \emph{spectral norm} is
\(
\|M\|
=\max_{\|\boldsymbol{x}\|=1}\|M\boldsymbol{x}\|
=\max\bigl\{|\lambda|\,\big|\) $\lambda$ is an eigenvalue of $M\bigr\}$.
The \emph{Frobenius norm} is defined by
\(
\|M\|_F
=\sqrt{\sum_{i,j} |M_{ij}|^2}.
\)
The following spectral norm inequality for matrices will be repeatedly used in the subsequent sections.
\begin{lemma}
\label{lem:frobenius}
Let $M$ be a real symmetric matrix with exactly $2D$ nonzero entries,
each of absolute value one.  Then
\(
 \|M\|\le\|M\|_{ F}=\sqrt{2D},
\)
where $\|\cdot\|$ denotes the spectral norm (operator $2$-norm), and $\|\cdot\|_F$ denotes the Frobenius norm.
\end{lemma}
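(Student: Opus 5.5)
The inequality $\|M\|\le\|M\|_F$ holds for any real symmetric matrix, and the equality $\|M\|_F=\sqrt{2D}$ is just a count of entries. We establish these two facts separately.

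\emph{Step 1: the norm comparison.} Since $M$ is real symmetric, the spectral theorem gives an orthonormal eigenbasis with real eigenvalues $\lambda_1,\dots,\lambda_n$. Hence $\|M\|=\max_i|\lambda_i|$, which agrees with both descriptions of the spectral norm in the definition above. On the other hand,
\[
\|M\|_F^2=\tr(M^{\top}M)=\tr(M^2)=\sum_{i=1}^n\lambda_i^2,
\]
because the trace is invariant under orthogonal conjugation. Therefore
\[
\|M\|^2=\max_i\lambda_i^2\le\sum_i\lambda_i^2=\|M\|_F^2 .
\]
If we prefer to avoid eigenvalues, Cauchy--Schwarz applied row by row gives the same bound directly: for any unit vector $\boldsymbol{x}$,
\[
\|M\boldsymbol{x}\|^2=\sum_i\Bigl(\sum_j M_{ij}x_j\Bigr)^2\le\sum_i\Bigl(\sum_j M_{ij}^2\Bigr)\|\boldsymbol{x}\|^2=\|M\|_F^2 .
\]

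\emph{Step 2: evaluating the Frobenius norm.} By hypothesis $M$ has exactly $2D$ nonzero entries, and each has absolute value one, so each contributes $1$ to $\sum_{i,j}|M_{ij}|^2$. Hence $\|M\|_F^2=2D$, that is, $\|M\|_F=\sqrt{2D}$.

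Combining the two steps gives $\|M\|\le\|M\|_F=\sqrt{2D}$.

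There is no real obstacle here. The only point needing care is to use symmetry, so that the operator norm equals the largest absolute eigenvalue as stated in the definition. The Cauchy--Schwarz argument does not need symmetry at all. The count $2D$ reflects the intended application to a symmetric $0/\pm1$ matrix supported on $D$ edges, where each edge contributes two entries, but the count itself uses nothing beyond the hypothesis.
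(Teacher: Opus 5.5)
Your proposal is correct and follows the paper's proof: you establish $\|M\|\le\|M\|_F$ and then evaluate $\|M\|_F^2$ by counting the $2D$ entries of absolute value one. The only difference is that you prove the norm comparison yourself (via the spectral theorem, or alternatively row-wise Cauchy--Schwarz), whereas the paper cites Horn and Johnson (Theorem 5.6.26(b)) for it.
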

\begin{proof}
The inequality \(\|M\| \le \|M\|_F\) follows by \cite[Theorem 5.6.26(b)]{H-J-2012}. 


By assumption, $M$ has at most $2D$ nonzero entries, and every nonzero entry satisfies $|M_{ij}|=1$.
Each nonzero entry contributes $|M_{ij}|^2 = 1$ to the Frobenius-norm squared; zero entries contribute $0$.
Therefore
\[
\|M\|_F^2
=\sum_{i,j}|M_{ij}|^2
\le \sum_{\substack{(i,j)\\M_{ij}\neq 0}} 1
\le 2D.
\]
Taking square roots yields
\(
\|M\|_F \le \sqrt{2D}.
\)
Combining with $\|M\|\le\|M\|_F$ gives us
\(
\|M\| \le \|M\|_F \le \sqrt{2D},
\)
as desired.
\end{proof}
\section{Proof of Theorem~\ref{thm:main}}\label{s3}
\subsection{The candidate and its spectral threshold}

Throughout this section, fix
\[
 m=\binom k2+kq+r,\qquad 1\le r\le k-1,
\]
and write
\[
 s_k(m):=\rho(S_{k,m}).
\]

\begin{proposition}
\label{prop:candidate}
The graph $S_{k,m}$ is $W_{2k+1}$-free.  Moreover, $s_k(m)$ is the
largest root of
\begin{align}
 F_{k,q,r}(x)
 :={}&x(x+1)\bigl(x^2-(k-1)x-kq-r\bigr)+r(k-r)(x+q).
\label{eq:quartic}
\end{align}
\end{proposition}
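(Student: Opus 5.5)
Two things must be shown: that $S_{k,m}$ contains no $W_{2k+1}$, and that $s_k(m)$ is the largest root of $F_{k,q,r}$. Throughout, write $K$ for the clique $K_k$, $I$ for the independent set $qK_1$, and $R\subseteq K$ for the $r$ neighbours of $z$. Note first that the edge count is right: $\binom k2+kq+r=m$.

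\textbf{Freeness.} Since $W_{2k+1}=K_1\vee C_{2k}$, we check that no neighbourhood $G[N(v)]$ contains a $C_{2k}$.
\begin{itemize}
\item Vertices of $I\cup\{z\}$ form an independent set, and each has all its neighbours in $K$.
\item If $v\in I\cup\{z\}$, then $N(v)\subseteq K$ has at most $k<2k$ vertices, so it cannot carry a $C_{2k}$.
\item If $v\in K$, then $N(v)$ is $(K\setminus\{v\})$ together with a subset of $I\cup\{z\}$. In $G[N(v)]$ the set $N(v)\cap(I\cup\{z\})$ is independent, and all its neighbours lie in the clique $K\setminus\{v\}$, which has $k-1$ vertices.
\item A cycle in $G[N(v)]$ cannot have two consecutive vertices from the independent part. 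So a cycle of length $2k$ would need at least $k$ vertices from $K\setminus\{v\}$, which has only $k-1$.
\end{itemize}
Hence $S_{k,m}$ is $W_{2k+1}$-free.

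\textbf{Characteristic polynomial.} We use the equitable partition $\{R,\ K\setminus R,\ I,\ \{z\}\}$. By symmetry the Perron vector is constant on each part; let its values be $a,b,c,t$. The eigen-equations are
\begin{align*}
\rho a&=(r-1)a+(k-r)b+qc+t,\\
\rho b&=ra+(k-r-1)b+qc,\\
\rho c&=ra+(k-r)b,\\
\rho t&=ra.
\end{align*}
Hence $\rho=s_k(m)$ is the largest eigenvalue of the $4\times4$ quotient matrix
\[
B=\begin{pmatrix} r-1&k-r&q&1\\ r&k-r-1&0&0\\ \text{(wait)}&&&\end{pmatrix}
\]
— more precisely, reading off the equations, the rows of $B$ are $(r-1,\,k-r,\,q,\,1)$, $(r,\,k-r-1,\,q,\,0)$, $(r,\,k-r,\,0,\,0)$ and $(r,\,0,\,0,\,0)$. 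The largest eigenvalue of an equitable quotient equals $\rho(G)$ for a connected graph.

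To compute $\det(xI-B)$ by elimination:
\begin{itemize}
\item Subtract the first two equations: $(\rho+1)(a-b)=t$.
\item Use $c=(ra+(k-r)b)/\rho$ and $t=ra/\rho$.
\item Substitute these into the second equation and clear denominators.
\end{itemize}
This should give $\det(xI-B)=F_{k,q,r}(x)$, possibly up to sign.

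\textbf{Sanity checks on $F_{k,q,r}$.} Two checks confirm the form of the polynomial.
\begin{itemize}
\item Setting $r=0$ gives $x(x+1)(x^2-(k-1)x-kq)$. This matches $K_k\vee qK_1$: its nontrivial eigenvalues are the roots of $x^2-(k-1)x-kq$, together with $-1$ and $0$.
\item The correction term is symmetric under $r\leftrightarrow k-r$, which is consistent with the structure of the elimination.
\end{itemize}

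\textbf{Main obstacle.} The delicate step is the algebra that produces exactly the form $x(x+1)(x^2-(k-1)x-kq-r)+r(k-r)(x+q)$. I would expand $\det(xI-B)$ directly and group the terms according to this target.

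Finally, the largest root of $F_{k,q,r}$ is the spectral radius and not a spurious root. This holds because $\rho(S_{k,m})$ is the largest eigenvalue of $B$, and the eigenvalues of $B$ are exactly the roots of its characteristic polynomial $F_{k,q,r}$.
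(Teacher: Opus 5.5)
Your proposal follows the paper's proof essentially step for step: the same neighbourhood argument for freeness (independent vertices cannot be consecutive on a cycle, and the clique part of $N(v)$ has only $k-1$ vertices), and the same equitable partition $\{R,K\setminus R,I,\{z\}\}$ with the same quotient matrix, finished by Perron--Frobenius. The determinant computation you left open does close, and there is no sign ambiguity since both sides are monic. Expanding $\det(xI-B)$ along the last column gives $x(x+1)\bigl(x^2-(k-1)x-kq\bigr)-r\bigl(x^2-(k-r-1)x-(k-r)q\bigr)$, which is exactly $F_{k,q,r}(x)$; your elimination reaches the same point via $\bigl(x^2-(k-r-1)x-(k-r)q\bigr)\bigl(x(x+1)-r\bigr)=r(x+q)x(x+1)$.
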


\begin{proof}
Let $C$ be the $k$-clique, $Q$ the independent set of size $q$, and
$z$ the partially adjacent vertex.  If a proposed wheel center lies in
$Q$, its neighborhood is $C$ and has fewer than $2k$ vertices.  The
neighborhood of $z$ is a clique of order $r<k$.  If the center lies in
$C$, the nonclique vertices of its neighborhood form an independent
set, while the clique part has only $k-1$ vertices.  Every cycle in such
a graph contains at most as many independent vertices as clique
vertices, so it has length at most $2(k-1)$.  Thus $S_{k,m}$ is
$W_{2k+1}$-free.

Partition $C$ into the $r$ neighbors and the $k-r$ nonneighbors of $z$.
Together with $Q$ and $\{z\}$ this is an equitable partition with
quotient matrix
\[
 \begin{pmatrix}
 r-1&k-r&q&1\\
 r&k-r-1&q&0\\
 r&k-r&0&0\\
 r&0&0&0
 \end{pmatrix}.
\]
Its characteristic polynomial is~\eqref{eq:quartic}. The Perron-Frobenius Theorem shows that the largest root is $s_k(m)$.
\end{proof}

In our whole context, let $c_k=\binom k2$. Define the residual
\[
 \cP_h(x):=x^2-(k-1)x-h+c_k.
\]
\begin{lemma}[\cite{Liu-Li-Li-Yu}]\label{lem2.4}
Let $m, k, a, b$ be positive integers with $m\geq 144k^4$ and $k\geq 3$. If $m + \frac{k(k+1)}{2} \equiv r \pmod{k}$ with $1\le r\le k-1$, then $\rho (S_{k,m})> \frac{k-1 + \sqrt{4m -k^2 -k +1}}{2} > \sqrt{m}$ .
\end{lemma}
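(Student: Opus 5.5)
The plan is to evaluate the quartic $F_{k,q,r}$ of Proposition~\ref{prop:candidate} at the point
\[
 \theta:=\frac{k-1+\sqrt{4m-k^2-k+1}}{2}
\]
and show that $F_{k,q,r}(\theta)<0$. The polynomial $F_{k,q,r}$ is monic of degree four, so $F_{k,q,r}(x)\to+\infty$ as $x\to\infty$. Hence $F_{k,q,r}(\theta)<0$ forces a real root larger than $\theta$. By Proposition~\ref{prop:candidate} the largest root is $s_k(m)=\rho(S_{k,m})$, which gives $\rho(S_{k,m})>\theta$.

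First I will record the residue condition in the paper's notation. Since $\binom{k+1}{2}+\binom{k}{2}=k^2\equiv 0\pmod k$, the hypothesis is equivalent to $m=c_k+kq+r$ with $1\le r\le k-1$. Next I compute the quadratic part at $\theta$. Because $\theta$ is the larger root of $x^2-(k-1)x-\frac{4m-2k^2+k}{4}$, we get
\[
 \theta^2-(k-1)\theta-kq-r=-\tfrac k4 .
\]
This gives $F_{k,q,r}(\theta)=-\frac k4\theta(\theta+1)+r(k-r)(\theta+q)$. I then eliminate $q$ using $kq=\theta^2-(k-1)\theta-r+\frac k4$. Writing $t:=r(k-r)/k$, this yields
\[
 F_{k,q,r}(\theta)=\Bigl(t-\tfrac k4\Bigr)\theta^2+\Bigl(t-\tfrac k4\Bigr)\theta+t\Bigl(\tfrac k4-r\Bigr).
\]
Here the $\theta$-coefficient is $-\frac k4+r(k-r)-t(k-1)=t-\frac k4$.

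The main step, and the only delicate one, is the sign analysis, which hinges on $\frac k4-t=\frac{(k-2r)^2}{4k}\ge 0$. I split into two cases.

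\emph{Case $k=2r$.} Then $t=\frac k4$, so the $\theta^2$ and $\theta$ terms cancel. What remains is $F_{k,q,r}(\theta)=\frac k4\bigl(\frac k4-\frac k2\bigr)=-\frac{k^2}{16}<0$.

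\emph{Case $k\ne 2r$.} Then $\frac k4-t\ge \frac1{4k}$, so
\[
 F_{k,q,r}(\theta)\le -\frac{\theta^2}{4k}+t\cdot\frac k4\le -\frac{\theta^2}{4k}+\frac{k^2}{16}.
\]
This is negative as soon as $\theta^2>k^3/4$. That holds with lots of room since $\theta>\sqrt{m-k^2}\ge 10k^2$ when $m\ge 144k^4$.

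Finally, the second inequality $\theta>\sqrt m$ is elementary. It is equivalent to $\sqrt{4m-k^2-k+1}>2\sqrt m-(k-1)$, where the right-hand side is positive. Squaring reduces this to $4(k-1)\sqrt m>2k^2-k$, which is immediate for $k\ge3$ and $m\ge144k^4$ since $\sqrt m\ge 12k^2$.

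I expect no real obstacle beyond the bookkeeping in the algebraic reduction. The key observation is that the leading coefficient $t-\frac k4$ vanishes exactly when $r=k/2$, which is why the even-$k$ middle residue must be handled through the constant term.
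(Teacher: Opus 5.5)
Your proof is correct. I checked the reduction $\theta^2-(k-1)\theta-kq-r=-k/4$, the elimination of $q$, the sign analysis, and the final squaring step for $\theta>\sqrt m$. Your argument also rests on the quotient quartic of Proposition~\ref{prop:candidate}, and that quartic is right: for instance $\det$ of the quotient matrix is $rq(k-r)=F_{k,q,r}(0)$, and the traces agree.

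The paper gives no proof of this lemma to compare with. It quotes the result from Liu--Li--Li--Yu and uses it only once, in Lemma~\ref{lem:candidate-expansion}, to get $\cP_m(s)>-k/4$. Your argument makes that step self-contained, depending only on the quartic of Proposition~\ref{prop:candidate}.

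Your computation also gives more than the cited statement. It produces the exact closed form
\[
 F_{k,q,r}(\theta)=-\frac{(k-2r)^2}{4k}\,\theta(\theta+1)+\frac{r(k-r)(k-4r)}{4k}.
\]
From this form you can see the following.
\begin{itemize}
\item For $r\ge k/4$ (including your special case $r=k/2$), the inequality $\rho(S_{k,m})>\theta$ needs no size assumption on $m$ at all.
\item In general you only need $\theta^2>k^3/4$ and $4(k-1)\sqrt m>2k^2-k$, so the hypothesis $m\ge144k^4$ is far from necessary.
\end{itemize}
Finally, your intermediate identity is exactly $\cP_m(\theta)=-k/4$. Combined with the monotonicity of $x^2-(k-1)x$ for $x>(k-1)/2$, this is precisely how the paper turns the lemma into the lower bound $\cP_m(s)>-k/4$.
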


\begin{lemma}
\label{lem:candidate-expansion}
For $s=s_k(m)$,
\begin{equation}
s^2-(k-1)s-h+c_k =\cP_m(s)
 =-\frac{r(k-r)(s+q)}{s(s+1)}.
\label{eq:candidate-residual}
\end{equation}
Consequently, $-c_k\le\cP_m(s)<0$.  Moreover,
\begin{equation}
 s_k(m)=\sqrt m+\frac{k-1}{2}
 -\left(\frac{k^2-1}{8}+\frac{r(k-r)}{2k}\right)m^{-1/2}
 +O_k(m^{-1}).
\label{eq:candidate-expansion}
\end{equation}
\end{lemma}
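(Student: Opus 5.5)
Three things are to be shown, in order: the identity \eqref{eq:candidate-residual}, the bounds $-c_k\le\cP_m(s)<0$, and the expansion \eqref{eq:candidate-expansion}. The identity comes directly from the quartic of Proposition~\ref{prop:candidate}; the bounds and the expansion are then elementary consequences.

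\textbf{The identity.} In $\cP_m$ the letter $h$ denotes $m$. Since $m-c_k=kq+r$, we have
\[
x^2-(k-1)x-kq-r=\cP_m(x).
\]
Hence \eqref{eq:quartic} reads
\[
F_{k,q,r}(x)=x(x+1)\cP_m(x)+r(k-r)(x+q).
\]
Evaluating at the root $s$ and dividing by $s(s+1)>0$ gives \eqref{eq:candidate-residual}.

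\textbf{The bounds.} Strict negativity is immediate: $r(k-r)>0$ and $s+q>0$, so the right-hand side of \eqref{eq:candidate-residual} is negative. For the lower bound I need
\[
\frac{r(k-r)(s+q)}{s(s+1)}\le c_k .
\]
First, $r(k-r)\le k^2/4\le\binom k2$ for $k\ge 2$. Second, I need $s+q\le s(s+1)$, i.e.\ $q\le s^2$. This holds because $s>\sqrt m$ by Lemma~\ref{lem2.4}, so $s^2>m>q$. (Alternatively, $S_{k,m}$ contains the star $K_{1,q+r}$, which gives $s^2\ge q+r$.) Since $m$ is large, Lemma~\ref{lem2.4} applies, and the bound follows.

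\textbf{The expansion.} Write $\varepsilon=\cP_m(s)$. Solving the quadratic $s^2-(k-1)s-(m-c_k+\varepsilon)=0$ for its larger root gives
\[
s=\frac{k-1+\sqrt{(k-1)^2+4(m-c_k)+4\varepsilon}}{2}.
\]
Since $(k-1)^2-4c_k=-k^2+1$, the radicand equals $4m-(k^2-1)+4\varepsilon$. By the bounds just proved, $\varepsilon=O_k(1)$, so $s=\sqrt m+O_k(1)$. Because $q=(m-c_k-r)/k=m/k+O_k(1)$, this gives
\[
\varepsilon=-\frac{r(k-r)(s+q)}{s(s+1)}=-\frac{r(k-r)}{k}+O_k(m^{-1/2}).
\]
Expanding $\sqrt{4m-A}=2\sqrt m-\frac{A}{4\sqrt m}+O(m^{-3/2})$ with $A=k^2-1-4\varepsilon$ yields
\[
s=\frac{k-1}{2}+\sqrt m-\frac{k^2-1-4\varepsilon}{8\sqrt m}+O_k(m^{-3/2}).
\]
Substituting the value of $\varepsilon$, the error $O_k(m^{-1/2})$ in $\varepsilon$ contributes only $O_k(m^{-1})$. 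The $m^{-1/2}$ coefficient is then
\[
-\frac{k^2-1}{8}-\frac{r(k-r)}{2k},
\]
as claimed.

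The only slightly delicate step is keeping track of the error when $q$ is replaced by $m/k$ in the bootstrap for $\varepsilon$. Since both $q$ and $s(s+1)$ are of order $m$ with $O_k(\sqrt m)$ corrections, that error is $O_k(m^{-1/2})$, which is enough.
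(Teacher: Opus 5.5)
Your proof is correct. The identity is obtained exactly as in the paper, but the bounds and the expansion follow a different route.

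\textbf{The bounds.} The paper sandwiches $s$ between the two explicit quadratic roots supplied by Theorem~\ref{thm:FZZ} and Lemma~\ref{lem2.4}. It then reads off $\cP_m(s)>-k/4\ge -c_k$ from the lower root, using monotonicity of $x^2-(k-1)x$. You instead estimate the right-hand side of \eqref{eq:candidate-residual} directly, via $r(k-r)\le k^2/4\le c_k$ and $q<s^2$. The paper's route gives the slightly sharper constant $k/4$. Yours is self-contained: with your star alternative ($s^2\ge q+r$) it needs neither Lemma~\ref{lem2.4} nor its hypotheses $m\ge 144k^4$, $k\ge 3$. Note also that Theorem~\ref{thm:FZZ} is superfluous there, since negativity already follows from the identity.

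\textbf{The expansion.} The paper assumes the ansatz $s=\sqrt m+a+bm^{-1/2}+o(m^{-1/2})$, matches coefficients, and then appeals to an analytic implicit function theorem to justify both the form of the expansion and the $O_k(m^{-1})$ remainder. That last step is only sketched. The derivative is evaluated ``at $y=0$, $s=1/y$'', which makes sense only after a rescaling such as $s=\sigma/y$.

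Your bootstrap runs as follows:
\begin{enumerate}[label=(\arabic*)]
\item solve the quadratic with $\varepsilon=\cP_m(s)$ treated as a parameter;
\item use $\varepsilon=O_k(1)$ to get $s=\sqrt m+O_k(1)$;
\item feed this back to get $\varepsilon=-r(k-r)/k+O_k(m^{-1/2})$;
\item expand the square root.
\end{enumerate}
This establishes the expansion and the remainder bound together, using only elementary estimates. Identifying $s$ with the larger root is harmless, since the other root is negative for large $m$. The implicit-function approach, carried out properly, would yield a full convergent series in $m^{-1/2}$, but only three terms are ever used in the paper. Your argument delivers exactly what is needed, rigorously.
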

\begin{proof}
Since $m-c_k=kq+r$, equation~\eqref{eq:quartic} at $x=s$ reads
\[
 s(s+1)\cP_m(s)+r(k-r)(s+q)=0,
\]
which proves~\eqref{eq:candidate-residual}.  In particular the residual
is negative and bounded in absolute value by a constant depending only
on $k$.

On the other hand, by  Theorem \ref{thm:FZZ} and Lemma \ref{lem2.4}. We obtain

\[
  \frac{k-1 + \sqrt{4m -k^2  +1}}{2}> s > \frac{k-1 + \sqrt{4m -k^2 -k +1}}{2}.
\]
So, \(\cP_m(s)=s^2-(k-1)s-m+c_k>-\frac{k}{4}\geq -c_k\) by \(s>\frac{k-1 + \sqrt{4m -k^2 -k +1}}{2}\).
Furthermore, we have
\[
 s-\sqrt m=a+b m^{-1/2}+o(m^{-\frac{1}{2}}), \text{  i.e.,  }  s=\sqrt m+a+b m^{-1/2}+o(m^{-\frac{1}{2}}).
\]
Substituting \(s\) into 
\eqref{eq:candidate-residual}, one sees that the constant term gives
$2a=k-1$, whereas the coefficient of $m^{-1/2}$ gives
\[
 a^2+2b-(k-1)a=-\frac{r(k-r)}{k}.
\]
Using $a=(k-1)/2$ yields
\[
 b=-\frac{k^2-1}{8}-\frac{r(k-r)}{2k}.
\]

Next, we derive finer estimates for the residual quantity of $s$. Define the small parameter
\(
y=m^{-1/2}\), i.e., \(m=\frac{1}{y^2}.
\)
Then
\[
q=\frac{m-\binom{k}{2}-r}{k}=\frac{1}{k y^2}-\frac{1}{k}\left(\binom{k}{2}+r\right).
\]
Substitute $m,q$ in terms of $y$ into the polynomial equation $F_{k,q,r}(s)=0$.
Rewrite the equation as an analytic implicit equation
\(
G(y,s)=0,
\)
where $G(y,s)$ is a real-analytic function in two variables near $(y,s)=(0,+\infty)$.

For $y\to 0$ (i.e.\ $m\to\infty$), the leading-order solution satisfies $s\sim 1/y=\sqrt{m}$.
Evaluate the partial derivative with respect to $s$ at the leading-order root:
\[
\frac{\partial G}{\partial s}\bigg|_{(y=0,s=1/y)}\neq 0.
\]
By the analytic implicit-function theorem, there exists a unique real-analytic solution $s=s(y)$ defined for sufficiently small $y>0$, which admits a convergent power-series expansion
\[
s(y)=\frac1y + b\, y + a  + O(y^2).
\]
Recall $y=m^{-1/2}$, $ a=\frac{k-1}{2}$ and $b = -\frac{k^2-1}{8}-\frac{r(k-r)}{2k}$. Substitute back:
\[
s=s(y)=\sqrt{m}+\frac{k-1}{2}-\left(\frac{k^2-1}{8}+\frac{r(k-r)}{2k}\right)m^{-1/2}+O_k(m^{-1}).
\]
The bound $O_k(\cdot)$ means the remainder depends only on fixed $k,r$ and holds uniformly for all sufficiently large $m$.
This completes the proof.
\end{proof}

For real $t$ with $t$ sufficiently large, keep the residue $r$ fixed,
put $q(t)=(t-c_k-r)/k$, and $s_{k,r}(t)$ denotes the largest real root of the polynomial $F_{k,q(t),r}(s)=0$, where $q$ is replaced by the function $q(t)$.

\begin{lemma}
\label{lem:normalized-derivative}
For fixed $k$ and $r$,
\(
 \left|\frac{d}{dt}\frac{s_{k,r}(t)}{\sqrt t}\right|
 \le c_k t^{-3/2}
\)
for all sufficiently large $t$.
\end{lemma}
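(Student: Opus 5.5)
Write $s=s_{k,r}(t)$ and $u=\sqrt t$. The plan is to compute $ds/dt$ by implicit differentiation of $F_{k,q(t),r}(s)=0$ and compare it with $\frac{d}{dt}(s/\sqrt t)=\frac{s'}{\sqrt t}-\frac{s}{2t^{3/2}}$. Since the expansion \eqref{eq:candidate-expansion} has the form $s=\sqrt t+a+O(t^{-1/2})$, the leading terms $\frac{1}{2t}$ must cancel. What remains is of order $t^{-3/2}$ with an explicit constant.

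First I rewrite the defining equation. By the computation in Lemma~\ref{lem:candidate-expansion}, valid verbatim for real $t$, $s$ satisfies
\[
\Phi(t,s):=s^2-(k-1)s-t+c_k+\frac{r(k-r)(s+q(t))}{s(s+1)}=0 .
\]
Its partial derivatives are
\[
\Phi_t=-1+\frac{r(k-r)}{k\,s(s+1)},\qquad
\Phi_s=2s-(k-1)+r(k-r)\,\partial_s\!\left(\frac{s+q}{s(s+1)}\right).
\]
From Lemma~\ref{lem:candidate-expansion} we have $q=O(s^2)$, so the last term of $\Phi_s$ is $O(1)$. Hence $\Phi_s=2s+O_k(1)\neq 0$ for large $t$. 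The implicit function theorem then makes $s$ differentiable, with
\[
s'=-\frac{\Phi_t}{\Phi_s}.
\]

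Second, I expand this quotient. Write $\beta=r(k-r)/k\le k/4$. Then
\[
s'=\frac{1-\beta/(s(s+1))}{2s-(k-1)+E},
\qquad
E:=r(k-r)\,\partial_s\!\left(\frac{s+q}{s(s+1)}\right).
\]
Substituting $q=(t-c_k-r)/k$ and $t=s^2-(k-1)s+c_k+O(1)$, obtained from $\cP_t(s)=O(1)$, gives $E=-\beta+O(s^{-1})$ explicitly.

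Third, I form
\[
\frac{d}{dt}\frac{s}{\sqrt t}=\frac{2ts'-s}{2t^{3/2}}
\]
and show the numerator $2ts'-s$ is bounded by $2c_k$. Using $t=s^2-(k-1)s+c_k-\frac{\beta(s+q)}{s(s+1)}\cdot k/k$ together with the formula for $s'$, the numerator becomes a rational function of $s$. Its $s^1$ and $s^0$ terms must cancel; this is guaranteed by the expansion $s=\sqrt t+a+bt^{-1/2}$, with $2a=k-1$. So $2ts'-s$ equals a bounded combination of $c_k,\beta,(k-1)^2$, plus $O(s^{-1})$. An equivalent shortcut is to differentiate the convergent series $s(y)$ from Lemma~\ref{lem:candidate-expansion} termwise. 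That yields
\[
\frac{s}{\sqrt t}=1+a t^{-1/2}+b t^{-1}+O(t^{-3/2}),
\qquad
\frac{d}{dt}\frac{s}{\sqrt t}=-\frac a2t^{-3/2}+O(t^{-2}),
\]
with $a=\frac{k-1}{2}$.

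The constant is then $\frac{k-1}{4}+o(1)$, which is at most $c_k=\binom k2$ for $k\ge2$, since $\frac{k-1}{4}<\frac{k(k-1)}{2}$. This gives the bound for large $t$.

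The main obstacle is justifying that the $O(\cdot)$ remainder in \eqref{eq:candidate-expansion} may be differentiated in $t$. I would handle this with the analyticity of $s(y)$ in $y=t^{-1/2}$ from the implicit function theorem argument, which makes termwise differentiation legitimate. Alternatively, I would rely on the explicit implicit-differentiation bound above, which avoids differentiating an asymptotic remainder.
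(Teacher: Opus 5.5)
Your ``shortcut'' is the paper's proof. The paper differentiates \eqref{eq:candidate-expansion} termwise to get $\frac{d}{dt}\frac{s_{k,r}(t)}{\sqrt t}=-\frac{k-1}{4}t^{-3/2}+O_k(t^{-2})$, and it never explains why the $O_k(\cdot)$ remainder may be differentiated. Your analyticity remark is the right repair. Its cleanest form is as follows. With $y=t^{-1/2}$ and $s=\sigma/y$, the function $y^4F_{k,q(t),r}(\sigma/y)$ is a polynomial in $(y,\sigma)$ that reduces to $\sigma^2(\sigma^2-1)$ at $y=0$. The root $\sigma=1$ there is simple; the other roots are $0,0,-1$. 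So the largest root gives an analytic $\sigma(y)$ with $\sigma(0)=1$ and $\sigma'(0)=\frac{k-1}{2}$. Since $s/\sqrt t=\sigma(t^{-1/2})$, its $t$-derivative is $-\frac12t^{-3/2}\sigma'(t^{-1/2})$. This also gives $s=\sqrt t+O(1)$ for non-integer $t$, without the graph-theoretic inputs (valid only for integer $m$) used in Lemma~\ref{lem:candidate-expansion}.

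Your implicit-differentiation route is a genuinely different argument and sound in outline, but three details are wrong.
\begin{enumerate}
\item From $\Phi=0$ one gets $t=s^2-(k-1)s+c_k+\frac{r(k-r)(s+q)}{s(s+1)}$, with a plus sign; the last term is $\beta+O(s^{-1})$.
\item $E$ is not $-\beta+O(s^{-1})$. Since $\partial_s\frac{s+q}{s(s+1)}=-\frac{s^2+2sq+q}{s^2(s+1)^2}$ and $q=s^2/k+O(s)$, one has $E=-\frac{2\beta}{s}+O(s^{-2})$.
\item In $2ts'-s$ only the $s^1$ terms cancel. The $s^0$ term is $-\frac{k-1}{2}$, matching $2ts'-s=-a+O(t^{-1/2})$ from the expansion, and this nonzero constant is exactly what produces the leading $-\frac{k-1}{4}t^{-3/2}$. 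Had it cancelled, the derivative would be $O(t^{-2})$, contradicting your own shortcut. Your value of $E$ would instead give the wrong constant $\frac{\beta-(k-1)}{2}$.
\end{enumerate}

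None of this threatens the lemma, because the bound only uses $\Phi_s=2s-(k-1)+O(s^{-1})$, $\Phi_t=-1+O(s^{-2})$ and $t=s^2-(k-1)s+O(1)$. Dividing $2t$ by $2s-(k-1)+O(s^{-1})$ gives $2ts'-s=-\frac{k-1}{2}+O(s^{-1})$. Hence $\bigl|\frac{d}{dt}\frac{s}{\sqrt t}\bigr|=\bigl(\frac{k-1}{4}+o(1)\bigr)t^{-3/2}\le c_kt^{-3/2}$.

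The implicit-differentiation route never differentiates an asymptotic remainder. The analytic route (the paper's, once justified) delivers the full expansion of the derivative at once.
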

\begin{proof}
By Lemma \ref{lem:candidate-expansion}, we have
\[ s_k(m)=\sqrt m+\frac{k-1}{2}
 -\left(\frac{k^2-1}{8}+\frac{r(k-r)}{2k}\right)m^{-1/2}
 +O_k(m^{-1}).\]
 Recall \(m= c_k+kq+r\) and \(t= c_k+kq(t)+r\), so that  \(s_{k,r}(t)= s_k(t)\). Combining this with large \(t\) yields
\[
 \frac{d}{dt}\frac{s_{k,r}(t)}{\sqrt t}
 =-\frac{k-1}{4}t^{-3/2}+O_k(t^{-2}),
\]
which proves the lemma.
\end{proof}

\subsection{Congruence-preserving dense cores}

For a graph $G$ with at least one edge, set
\[
 \Phi(G):=\frac{\rho(G)}{\sqrt{e(G)}}.
\]
Fix $0<\varepsilon<10^{-3}$.  A subgraph is always allowed to discard
isolated vertices after edges have been deleted.

Let $G$ be a graph. Based on $\varepsilon$-\textit{dense} proposed by Fang, Lin and Zhai~\cite{FangLinZhai}, a proper edge-induced subgraph $G'\subsetneq G$
is $(\varepsilon,k)$-\textit{dense} in $G$ if 
\(0<e(G)-e(G')<\varepsilon e(G) 
 ,\, k\mid (e(G)-e(G'))
\)
and
\(
 \Phi(G')-\Phi(G)\ge\frac{\varepsilon(e(G)-e(G'))}{2e(G)}.
\)
A graph with no such proper subgraph is called an
$(\varepsilon,k)$-\textit{core}.
The next lemma is a core extraction.
\begin{lemma}
\label{lem:core-extraction}
Let $G$ be a sufficiently large $m$-edge $W_{2k+1}$-free graph with
\(
 \rho(G)\ge s_k(m),\ m-c_k\equiv r\pmod k.
\)
Then $G$ contains an $(\varepsilon,k)$-core $H$ such that, with
$h=e(H)$,
\[
 h=(1-o(1))m,\qquad h\equiv m\pmod k,
 \qquad \rho(H)\ge s_k(h).
\]
If $H\ne G$, then $\rho(H)>s_k(h)$.
\end{lemma}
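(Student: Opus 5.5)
Run the standard dense-core iteration in the congruence-preserving form. Set $G_0=G$. While $G_i$ is not an $(\varepsilon,k)$-core, choose an $(\varepsilon,k)$-dense subgraph $G_{i+1}\subsetneq G_i$ and delete any isolated vertices. Each step removes a positive multiple of $k$ edges, so $e(G_i)\equiv m\pmod k$ for every $i$; hence $e(G_i)-c_k\equiv r\pmod k$ and the comparison value $s_k(e(G_i))$ is defined with the same residue $r$. Edge counts strictly decrease, so the process stops at some core $H=G_N$ with $h=e(H)\equiv m\pmod k$. Every $G_i$ is a subgraph of $G$, so it is $W_{2k+1}$-free. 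The remaining work is to show that $h=(1-o(1))m$ and that $\rho(H)\ge s_k(h)$, with strict inequality when $N\ge1$.

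For the size bound, telescope $\Phi$. Write $m_i=e(G_i)$. The density gain gives
\[
\Phi(H)-\Phi(G)\ge\sum_{i<N}\frac{\varepsilon(m_i-m_{i+1})}{2m_i}\ge\frac{\varepsilon(m-h)}{2m}.
\]
Lemma~\ref{lem2.4} gives $\Phi(G)\ge s_k(m)/\sqrt m>1$. Suppose first that $h$ is large. Then Lemma~\ref{lem:stability}, applied to the $W_{2k+1}$-free graph $H$ (note $\chi(W_{2k+1})=3$), gives $\Phi(H)\le 1+o(1)$, so $m-h=o(m)$. The case where $h$ is not large must be excluded separately. Here I would use the trivial bound $\rho(H)\le\sqrt{2h}$, which gives $\Phi(H)\le\sqrt2$ and hence $m-h\le 2(\sqrt2-1)m/\varepsilon$. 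This bound is useless on its own because $\varepsilon$ is small. I would therefore run the argument step by step, in the manner of Fang--Lin--Zhai. Each single step has $m_{i+1}>(1-\varepsilon)m_i$, so the edge counts cannot jump. Let $i$ be the first index with $m_i<(1-\delta)m$ for a small fixed $\delta$. Then $m_i\ge(1-\delta)(1-\varepsilon)m$ is still large, and stability applied to $G_i$ gives $\Phi(G_i)\le1+o(1)$. But the telescoped gain gives $\Phi(G_i)\ge1+\varepsilon\delta/2$, a contradiction. So every $m_i$, and in particular $h$, is at least $(1-\delta)m$ for each fixed $\delta$, which is $h=(1-o(1))m$.

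For the spectral inequality, compare the normalized candidate values. Let $t\in[h,m]$. Lemma~\ref{lem:normalized-derivative} applies because $h$ and $m$ lie in the same residue class, so $s_k(h)=s_{k,r}(h)$ and $s_k(m)=s_{k,r}(m)$. It gives
\[
\frac{s_k(h)}{\sqrt h}-\frac{s_k(m)}{\sqrt m}\le c_k\int_h^m t^{-3/2}\,dt\le c_k(m-h)h^{-3/2}.
\]
Since $h$ is large, this is much smaller than $\varepsilon(m-h)/(2m)$. Combining with the telescoped gain,
\[
\Phi(H)\ge\frac{s_k(m)}{\sqrt m}+\frac{\varepsilon(m-h)}{2m}>\frac{s_k(h)}{\sqrt h}
\]
whenever $m>h$. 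Multiplying by $\sqrt h$ gives $\rho(H)>s_k(h)$ if $H\ne G$, and $\rho(H)=\rho(G)\ge s_k(m)$ if $H=G$.

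I expect the main obstacle to be the size control $h=(1-o(1))m$: stability must be invoked on intermediate graphs that are themselves large, which the step-by-step argument above is designed to guarantee. The congruence restriction is what makes the final comparison work. Without it, $s_k(h)/\sqrt h$ would switch to a different residue $r'$, and its $m^{-1}$ coefficient changes by an amount of order $m^{-1}$, which a single small deletion cannot dominate.
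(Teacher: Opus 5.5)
Your proof is correct and follows the paper's argument: the congruence-preserving iteration, telescoping the density gains, the edge-spectral stability bound $\Phi\le 1+o(1)$, and the normalized-derivative comparison of $s_k(h)/\sqrt h$ with $s_k(m)/\sqrt m$ on the same residue branch. The only deviation is in ruling out a small core: the paper first gets $h\ge c_\varepsilon m$ from $\sum_i (m_i-m_{i+1})/m_i\ge(1-\varepsilon)\log(m/h)$ and $\Phi(H)\le\sqrt2$, then applies stability to $H$; your first-exit argument instead applies stability to the first $G_i$ with $m_i<(1-\delta)m$, which still has $m_i>(1-\varepsilon)(1-\delta)m$ edges, and this is equally valid and avoids the logarithmic estimate.
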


\begin{proof}
Starting with $G_0=G$, whenever $G_i$ is not a core choose an
$(\varepsilon,k)$-dense proper subgraph $G_{i+1}$.  Put
\(
 m_i=e(G_i),\ \zeta_i=m_i-m_{i+1}.
\)
The process terminates at a core $H=G_t$, and every $\zeta_i$ is
divisible by $k$.  Hence $h\equiv m\pmod k$.

Summing the defining inequalities gives
\begin{equation}
 \Phi(H)-\Phi(G)
 \ge\frac{\varepsilon}{2}
 \sum_{i=0}^{t-1}\frac{\zeta_i}{m_i}.
\label{eq:core-sum}
\end{equation}
Set \(x_i = \zeta_i/m_i\). Then we have \(0<x_i<\varepsilon\) and \(m_{i+1} = m_i(1 - x_i)\). Thus,
\[
\log \frac{m_0}{m_t} = \sum_{i=0}^{t-1} \log \frac{m_i}{m_{i+1}} = \sum_{i=0}^{t-1} -\log(1 - x_i).
\]
For \(x \in [0, \varepsilon]\), we have \(-\log(1 - x) = \int_0^x \frac{dt}{1-t} \le \frac{x}{1-\varepsilon}\). Since \(x_i \in (0, \varepsilon)\), we obtain
\[
\sum_{i=0}^{t-1} \frac{\zeta_i}{m_i} = \sum_{i=0}^{t-1} x_i \ge (1 - \varepsilon) \sum_{i=0}^{t-1} -\log(1 - x_i) = (1 - \varepsilon) \log \frac{m}{m_t}.
\]
Together with (\ref{eq:core-sum}), one has
\begin{equation}
\Phi(H) - \Phi(G) \ge \frac{\varepsilon(1 - \varepsilon)}{2} \log \frac{m}{m_t}.
\label{eq:4}
\end{equation}
We know that \(\rho^2(H)\leq tr(A(H)^2)=2e(H)\), and therefore $\Phi(H)\le\sqrt2$, while \(\Phi(G)=\frac{s_k(m)}{\sqrt{e(G)}}=1+o(1)\). Thus, (\ref{eq:4}) gives \(\log \frac{m}{m_t}=O_{\varepsilon}(1)\) and so $h=m_t\ge c_{\varepsilon} m$ for some \(c_{\varepsilon}>0\).

If $h\le(1-\alpha)m$ for a fixed $\alpha>0$, then
\eqref{eq:core-sum} yields
\[
 \Phi(H)-\Phi(G)\geq \frac{\varepsilon}{2m}\sum_{i=0}^{t-1}{\zeta_i}\geq\frac{\varepsilon(m-h)}{2m}
 \ge\frac{\varepsilon\alpha}{2}.
\]
Since $H$ is $W_{2k+1}$-free and $h\ge c_\varepsilon m$, Lemma \ref{lem:stability} gives $\Phi(H)\le1+o(1)$, a contradiction.  Therefore
$h=(1-o(1))m$.

We may now assume $h\ge m/2$.  By
Lemma~\ref{lem:normalized-derivative} and Lagrange Mean Value Theorem, we obtain
\[
 \left|
 \frac{s_k(h)}{\sqrt h}-\frac{s_k(m)}{\sqrt m}
 \right|
 \le c_k\frac{m-h}{m^{3/2}},
\]
where the same residue branch is used because $h\equiv m\pmod k$.
On the other hand,~\eqref{eq:core-sum} gives
\[
 \Phi(H)-\Phi(G)\ge\frac{\varepsilon(m-h)}{2m}.
\]
For large $m$, the latter dominates the former whenever $m-h>0$.
Since $\Phi(G)\ge s_k(m)/\sqrt m$, we obtain\[
\Phi(H)-\ \frac{s_k(h)}{\sqrt h}=\bigl(\Phi(G)-\frac{s_k(m)}{\sqrt m}\bigr)+\bigl(\Phi(H)-\Phi(G)\bigr)-\bigl( \frac{s_k(h)}{\sqrt h}-\frac{s_k(m)}{\sqrt m}\bigr)>0.
\]
So that $\rho(H)\ge s_k(h)$, with strict inequality if $H\ne G$.
\end{proof}

The two lemmas below serve primarily to estimate the Perron components. The first asserts that there are at most \(k-1\) edges having a negligible influence on the Rayleigh quotient. The second provides a lower bound on the square of the Perron component of a vertex with degree restriction.
\begin{lemma}
\label{lem:light-edges}
Let $H$ be a sufficiently large $(\varepsilon,k)$-core of size $h$ with $\rho(H)\ge\sqrt h$, and unit Perron vector $\boldsymbol{x}$.  Then fewer than $k$
edges $uv$ satisfy
\begin{equation}
 x_ux_v\le\frac{1-4\varepsilon}{4\sqrt h}.
\label{eq:light}
\end{equation}
\end{lemma}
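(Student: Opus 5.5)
Argue by contradiction. Suppose at least $k$ edges satisfy \eqref{eq:light}, pick exactly $k$ of them, say $e_1,\dots,e_k$, and let $H'=H-\{e_1,\dots,e_k\}$, discarding any isolated vertices. Then $e(H)-e(H')=k$, which is divisible by $k$, and $0<k<\varepsilon h$ once $h$ is large. So $H'$ is an admissible candidate for an $(\varepsilon,k)$-dense subgraph. The plan is to show
\[
\Phi(H')-\Phi(H)\ge\frac{\varepsilon k}{2h},
\]
which contradicts $H$ being an $(\varepsilon,k)$-core.

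First I would bound $\rho(H')$ from below by the Rayleigh quotient, using the restriction of the unit Perron vector $\boldsymbol{x}$ of $H$:
\[
\rho(H')\ge \boldsymbol{x}^{T}A(H')\boldsymbol{x}=\rho(H)-2\sum_{i=1}^k x_{u_i}x_{v_i}\ge \rho(H)-\frac{k(1-4\varepsilon)}{2\sqrt h}.
\]
Discarding isolated vertices does not affect this bound.

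Next I would compare the normalized quantities. Since $e(H')=h-k$,
\[
\Phi(H')\ge \frac{\rho(H)-\frac{k(1-4\varepsilon)}{2\sqrt h}}{\sqrt{h-k}},
\qquad
\frac{1}{\sqrt{h-k}}=\frac{1}{\sqrt h}\Bigl(1+\frac{k}{2h}+O(h^{-2})\Bigr).
\]
Multiplying out gives
\[
\Phi(H')-\Phi(H)\ge \frac{\rho(H)}{\sqrt h}\cdot\frac{k}{2h}-\frac{k(1-4\varepsilon)}{2h}-O(h^{-2}).
\]
Using the hypothesis $\rho(H)\ge\sqrt h$, this is at least
\[
\frac{k}{2h}\bigl(1-(1-4\varepsilon)\bigr)-O(h^{-2})=\frac{2\varepsilon k}{h}-O(h^{-2}),
\]
and for large $h$ this exceeds $\frac{\varepsilon k}{2h}=\frac{\varepsilon(e(H)-e(H'))}{2e(H)}$. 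Hence $H'$ is $(\varepsilon,k)$-dense in $H$, which is the desired contradiction.

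The only delicate point is bookkeeping. The expansion of $(h-k)^{-1/2}$ must keep the exact first-order term, and the $O(h^{-2})$ error must be absorbed by the margin $\tfrac{3}{2}\varepsilon k/h$, which holds since $k$ and $\varepsilon$ are fixed. I would also check that $H'$ is a proper edge-induced subgraph with at least one edge, so that $\Phi(H')$ is defined; this is immediate for large $h$.
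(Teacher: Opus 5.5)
Your proposal is correct and follows the paper's argument: delete $k$ light edges, bound $\rho(H')$ from below by the Rayleigh quotient of the restricted Perron vector, and expand $(h-k)^{-1/2}$ to obtain a gain of about $2\varepsilon k/h$, which contradicts the $(\varepsilon,k)$-core property. The only difference is cosmetic: the paper bounds the ratio $\Phi(H')/\Phi(H)$ and then uses $\Phi(H)\ge 1$, whereas you bound the difference directly, and your $O(h^{-2})$ term is harmless because $(1-k/h)^{-1/2}\ge 1+\frac{k}{2h}$ holds exactly.
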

\begin{proof}
Suppose to the contrary that there exist \(k\) edges satisfying ~\eqref{eq:light}, and let \(E_0\) be the set of these \(k\) edges. let $H'=H-E_0$.  The Rayleigh quotient gives
\[
 \rho(H')\ge\rho(H)-2\sum_{uv\in E_0}x_ux_v
 \ge\rho(H)-\frac{k(1-4\varepsilon)}{2\sqrt h}.
\]
Therefore
\begin{align*}
 \frac{\Phi(H')}{\Phi(H)}
 &\ge
 \left(1-\frac{k(1-4\varepsilon)}{2\rho(H)\sqrt h}\right)
 \left(1-\frac{k}{h}\right)^{-1/2}
 \ge
 \left(1-\frac{k(1-4\varepsilon)}{2h}\right)
 \left(1+\frac{k}{2h}\right)
 >1+\frac{\varepsilon k}{h}
\end{align*}
for sufficiently large $h$.  As $\Phi(H)\ge1$, this implies
\(
 \Phi(H')-\Phi(H)>\frac{\varepsilon k}{2h},
\)
contrary to the $(\varepsilon,k)$-core property.
\end{proof}

\begin{lemma}
\label{lem:degree-coordinate}
Under the assumptions of Lemma~\ref{lem:light-edges}, every vertex $u$
with $d=d_H(u)\le\varepsilon h/8$ satisfies
\begin{equation}
 2h x_u^2\ge(1-20\varepsilon)d-8k.
\label{eq:degree-coordinate}
\end{equation}
\end{lemma}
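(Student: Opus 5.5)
The plan is to compare $H$ with the graph $H'$ obtained by deleting a batch of edges at $u$ whose number is divisible by $k$, and then to use the $(\varepsilon,k)$-core property of $H$ to turn the fact that $H'$ is \emph{not} dense into a lower bound on $x_u$. Let $d'=k\lfloor d/k\rfloor$, so that $d'\ge d-k+1$ and $k\mid d'$. If $d'=0$ then $d\le k-1$, the right-hand side of \eqref{eq:degree-coordinate} is negative, and there is nothing to prove. So assume $d'\ge k$.

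Let $H'$ be $H$ with any $d'$ of the edges at $u$ deleted. Then $0<h-e(H')=d'\le\varepsilon h/8<\varepsilon h$ and $k\mid d'$. The core property therefore forces
\[
\Phi(H')-\Phi(H)<\frac{\varepsilon d'}{2h}.
\]
Since $\rho(H)\ge\sqrt h$ gives $\Phi(H)\ge1$, this implies $\Phi(H')/\Phi(H)<1+\varepsilon d'/(2h)$.

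To bound $\rho(H')$ from below, I avoid tracking the $\le k-1$ surviving edges at $u$. Note that $H'$ contains $H-u$ (all edges at $u$ removed), so $\rho(H')\ge\rho(H-u)$. Use the test vector $\boldsymbol y$ equal to $\boldsymbol x$ with the $u$-coordinate set to $0$. The eigen-equation $\sum_{v\sim u}x_v=\rho x_u$ gives $\boldsymbol y^TA(H-u)\boldsymbol y=\rho-2x_u\sum_{v\sim u}x_v=\rho(1-2x_u^2)$, and $\|\boldsymbol y\|^2=1-x_u^2$. Hence
\[
\rho(H')\ge\rho(H)\,\frac{1-2x_u^2}{1-x_u^2}\ge\rho(H)\bigl(1-x_u^2-2x_u^4\bigr),
\]
and $x_u^4\le x_u^2\cdot x_u^2$ is a negligible multiple of $x_u^2$. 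This zeroing step is the key choice. Keeping $x_u$ and subtracting only the deleted edges gives the weaker bound $\rho(1-2x_u^2)$, which loses a factor $2$ and would only yield $2hx_u^2\gtrsim d/2$.

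Combining with $(1-d'/h)^{-1/2}\ge1+d'/(2h)$ gives
\[
\bigl(1-x_u^2(1+2x_u^2)\bigr)\Bigl(1+\frac{d'}{2h}\Bigr)<1+\frac{\varepsilon d'}{2h}.
\]
Solving for $x_u^2$ yields
\[
x_u^2(1+2x_u^2)\Bigl(1+\frac{d'}{2h}\Bigr)>\frac{(1-\varepsilon)d'}{2h}.
\]
We have $d'/(2h)\le\varepsilon/16$. If $x_u^2\le\varepsilon$ (otherwise $2hx_u^2\ge 2\varepsilon h>d$ and we are done), the left side is at most $x_u^2(1+3\varepsilon)$. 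Hence
\[
2hx_u^2\ge\frac{1-\varepsilon}{1+3\varepsilon}\,d'\ge(1-20\varepsilon)(d-k+1)\ge(1-20\varepsilon)d-8k.
\]

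The main obstacle is making sure the deletion is admissible and loses no constant factor. Divisibility by $k$ is what forces the $-8k$ slack: the leftover $d-d'<k$ edges are the price. The factor-$2$ issue is resolved by the zeroing trick together with the monotonicity $\rho(H')\ge\rho(H-u)$. The remaining bookkeeping of $\varepsilon$-errors, including the case split on whether $x_u^2$ is small, is routine, and the generous constant $20$ leaves ample room.
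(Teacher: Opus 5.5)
Your proof is correct and is essentially the paper's argument: delete a multiple of $k$ edges that includes the edges at $u$, bound $\rho(H')$ by the Rayleigh quotient of $\boldsymbol{x}$ with $x_u$ set to zero and renormalized, and contradict the $(\varepsilon,k)$-core inequality once $x_u^2$ is small. The one variation is how you meet the divisibility condition, and your version also handles the renormalization more carefully than the paper's text:
\begin{itemize}
\item The paper rounds $d$ up to $d+\ell$ by also deleting $\ell\le k-1$ low-weight edges away from $u$, chosen by an averaging argument. You round down to $k\lfloor d/k\rfloor$ and use $\rho(H')\ge\rho(H-u)$, which removes the averaging step entirely.
\item Your estimate $(1-2x_u^2)/(1-x_u^2)\ge 1-x_u^2-2x_u^4$ is exactly what yields the coefficient $(1+3\varepsilon)$ on $x_u^2$. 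The paper justifies this step only by citing $(1-t)<1$, which by itself would lose the factor $2$.
\end{itemize}
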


\begin{proof}
Choose $\ell\in\{0,\ldots,k-1\}$ so that $k\mid (d+\ell)$.
Among the $h-d$ edges not incident with $u$, take $\ell$ edges $e_1,\ldots,e_\ell$ with
minimum total Perron product.  Since
$\sum_{vw\in E(H)}x_vx_w=\rho(H)/2$, one has
$$
  \sum_{vw\in \{e_1,\ldots, e_\ell\}}x_vx_w\le \frac{\ell\rho(H)}{2(h-d)}.
$$

Delete these edges together with all $d$ edges incident with $u$, and
write $H'$ for the resulting graph.  Put $t=x_u^2$ and
$z=(d+\ell)/h$.  Restricting $\boldsymbol{x}$ to $V(H)\setminus\{u\}$ and
renormalizing gives
\begin{equation}
 \frac{\Phi(H')}{\Phi(H)}
 \ge
 \frac{1-2t-\ell/(h-d)}{(1-t)\sqrt{1-z}}.
\label{eq:degree-ratio}
\end{equation}

If the right-hand side of~\eqref{eq:degree-coordinate} is nonpositive,
there is nothing to prove.  Otherwise suppose, for a contradiction, that
\(
 t<\frac{(1-20\varepsilon)d-8k}{2h}.
\)
Then $t\le\varepsilon/16$, $z\le\varepsilon/7$ for large $h$, and
$\ell/(h-d)\le2k/h$.  Using
$(1-z)^{-1/2}\ge1+\frac{1}{2}z$ and
$(1-t)<1$, a direct multiplication in
\eqref{eq:degree-ratio} yields
\begin{equation}
 \frac{\Phi(H')}{\Phi(H)}-1
 \ge
 \frac{d+\ell}{2h}-(1+3\varepsilon)
 \left(t+\frac{2k}{h}\right).
\label{eq:degree-ratio-lower}
\end{equation}
Substituting the assumed upper bound on $t$ into
\eqref{eq:degree-ratio-lower}, and using $\ell\le k-1$, gives
\[
 \frac{\Phi(H')}{\Phi(H)}-1
 \ge\frac{\varepsilon(d+\ell)}{2h}
\]
for $\varepsilon<10^{-3}$ and all sufficiently large $h$.
Since $d+\ell$ is positive, divisible by $k$, and less than
$\varepsilon h$, this contradicts the definition of an
$(\varepsilon,k)$-core.
\end{proof}

\subsection{Stability partition and Perron localization}
By Lemma \ref{lem:core-extraction}, suppose that $H$ is an $(\varepsilon,k)$-core of $G$. Then $H$ is a $W_{2k+1}$-free $(\varepsilon,k)$-core with
$h\to\infty$ edges and
\(
 \rho:=\rho(H)\ge s_k(h).
\)
Choose constants in the hierarchy
\(
 0<\varepsilon\ll\eta^4\ll k^{-1}.
\)
By Theorem~\ref{thm:FZZ} and
Lemma~\ref{lem:candidate-expansion},
\(
 \rho=(1+o(1))\sqrt h.
\)

Based on Lemma~\ref{lem:stability}, among all pairs of disjoint sets
$A,B$, choose one minimizing
\(
 D:=\dist(H,K_{A,B}),
\)
and, subject to this, minimizing
$R:=V(H)\setminus(A\cup B)$.  Write
\(
 a=|A|\le b=|B|.
\)
Then
\begin{equation}
 D=o(h),\qquad ab=(1+o(1))h.
\label{eq:ab}
\end{equation}
An edge between $A$ and $B$ is called a \emph{missing cross edge} if it lies in the complete bipartite graph $K_{A,B}$ but is missing from $H$.

The minimality of \(D\) and \(|R|\) implies that
\begin{equation}
 \begin{aligned}
 d_B(u)&\ge b/2 && \text{for } u\in A,\\
 d_A(v)&\ge a/2 &&\text{for } v\in B ,\\
 d_A(w)&<a/2,\quad d_B(w)<b/2 && \text{for } w\in R.
 \end{aligned}
\label{eq:minimality}
\end{equation}

Indeed, if $u\in A$ satisfies $d_B(u)<b/2$, then removing $u$ from $A$ replaces $b-d_B(u)$ missing cross edges with only $d_B(u)$ extra edges, which strictly decreases $D$. The other assertions follow in the same way by moving a vertex into or out of a part.

For fixed $\gamma>0$, put
\[
 A_\gamma=\{u\in A:d_B(u)\ge(1-\gamma)b\},\qquad
 B_\gamma=\{v\in B:d_A(v)\ge(1-\gamma)a\}.
\]
The number of missing cross edges is $D=o(h)=o(ab)$, so
\begin{equation}
 |A\setminus A_\gamma|=o(a),\qquad
 |B\setminus B_\gamma|=o(b).
\label{eq:good-sets}
\end{equation}

Write $A^*=A_{\eta^2}$ and $B^*=B_{\eta^2}$. For the above graphs $H$ and $K_{A,B}$, we call an edge $uv$ to be \textit{edit-error} if the edge $uv\in  (E(H)\setminus E(K_{A,B}))\cup(E(K_{A,B})\setminus E(H))$.

Let \(\boldsymbol{x}\) be the chosen unite perron vector and \(x_{u^*}=\max_{v\in V(H)}x_v\). We have the following Perron mass stability result.
\begin{lemma}
\label{lem:4.2}
There is a constant $C>0$ such that, for all sufficiently large $h$,
\[
x_u^2 \ge (1-C\eta^2)\frac{b}{2h}\quad (u\in A^*) \quad\text{and}\quad
x_v^2 \ge (1-C\eta^2)\frac{a}{2h}\quad (v\in B^*).
\]
Consequently, both $\sum_{u\in A^*}x_u^2$ and $\sum_{v\in B^*}x_v^2$ are bounded below by $\frac12\bigl(1-C\eta^2-o(1)\bigr)$.
\end{lemma}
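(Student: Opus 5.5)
The plan is to apply Lemma~\ref{lem:degree-coordinate} directly to vertices of $A^*$ and $B^*$. Its degree hypothesis $d\le\varepsilon h/8$ must be checked first. By \eqref{eq:ab}, $ab=(1+o(1))h$. Both $a\to\infty$ and $b\to\infty$: if $a$ were bounded, then $\rho\le\sqrt{ab}+O(\sqrt{D})+\dots$ would still be of order $\sqrt h$, so that does not immediately rule it out. However, $K_{1,b}$-like structures with bounded $a$ are excluded by the $(\varepsilon,k)$-core property, since a star-heavy vertex carries Perron mass about $1/2$ and deleting it does not decrease $\Phi$ enough. I expect this to be the main obstacle: one needs $b\le\varepsilon h/8$ up to a $(1+\gamma)$ factor, i.e.\ $a\ge 9/\varepsilon$.

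The most natural route is as follows. Suppose some vertex $u$ has $d_H(u)>\varepsilon h/8$. Because $\rho^2x_u^2\le$ the walk count and $x$ is unit, we get $x_u^2\le 1/2+o(1)$. I would then use the Rayleigh argument of Lemma~\ref{lem:degree-coordinate} in reverse, deleting the $d$ edges at $u$ (padded to a multiple of $k$ by $\ell$ light edges). Alternatively, I would cite that in \cite{FangZhaiZhang} the core has maximum degree $o(h)$ by the same computation \eqref{eq:degree-ratio}: if $d\ge \varepsilon h$, split the deletion into chunks of size less than $\varepsilon h$. I would present this as a short sublemma: $\Delta(H)\le \varepsilon h/8$ for large $h$, giving $a\ge(8/\varepsilon)(1-o(1))$. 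Since $\varepsilon\ll\eta^4$, any vertex $u\in A^*$ then has $d_H(u)\le d_B(u)+(\text{edit errors at }u)$. Here one needs $d_H(u)$ not much larger than $b$. Degrees exceeding $\varepsilon h/8$ are excluded by the sublemma, but Lemma~\ref{lem:degree-coordinate} only gives a lower bound, which is monotone in $d$. So we actually only need a lower bound on $d$: take $d=\min(d_H(u),\lfloor\varepsilon h/8\rfloor)$. Note that Lemma~\ref{lem:degree-coordinate} needs the true degree, so the sublemma is used to guarantee $d_H(u)\le\varepsilon h/8$.

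With that in place, for $u\in A^*$ we have $d=d_H(u)\ge d_B(u)\ge(1-\eta^2)b$, so
\[
2hx_u^2\ge(1-20\varepsilon)(1-\eta^2)b-8k\ge(1-C\eta^2)b,
\]
using $b\ge\sqrt{ab}/1\cdot(1+o(1))\to\infty$ (since $b\ge a$ gives $b\ge(1-o(1))\sqrt h$) and $\varepsilon\ll\eta^2$. Symmetrically, for $v\in B^*$ we get $2hx_v^2\ge(1-20\varepsilon)(1-\eta^2)a-8k$. To absorb $8k$ we need $a\gg k/\eta^2$. This follows from $a\ge 8/\varepsilon\cdot(1-o(1))$ and $\varepsilon\ll\eta^4\ll k^{-1}$, since then $8k\le \eta^2 a$.

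For the consequence, sum over $A^*$: by \eqref{eq:good-sets}, $|A^*|=(1-o(1))a$, so
\[
\sum_{u\in A^*}x_u^2\ge(1-o(1))a(1-C\eta^2)\frac{b}{2h}=\frac12(1-C\eta^2-o(1)),
\]
using $ab=(1+o(1))h$. The argument for $B^*$ is identical.
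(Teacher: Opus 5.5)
Your plan has a genuine gap: it rests on the sublemma $\Delta(H)\le\varepsilon h/8$ (equivalently $a\ge(8/\varepsilon)(1-o(1))$), and that sublemma is false. Without it, Lemma~\ref{lem:degree-coordinate} cannot be applied to $u\in A^*$.

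\textbf{Why the sublemma fails.} The graph the paper ultimately proves $H$ to be, $S_{k,h}$ (Proposition~\ref{prop:finite-extremum}), has stability partition with $a=k$, and its clique vertices have degree about $h/k\gg\varepsilon h/8$. The same holds for $K_k\vee qK_1$. That graph is an $(\varepsilon,k)$-core by Theorem~\ref{thm:FZZ}: deleting $\zeta$ edges raises $\Phi$ by at most $O_k(\zeta h^{-3/2})$, far below $\varepsilon\zeta/(2h)$.

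Your heuristic points the wrong way. The core condition only forbids deletions that \emph{raise} $\Phi$ by at least $\varepsilon\zeta/(2h)$. Stripping edges from a vertex of large Perron weight \emph{lowers} $\Phi$, so there is nothing to contradict, and cutting the deletion into chunks does not change this. Core-ness turns a degree into a \emph{lower} bound on $x_u^2$, which is exactly Lemma~\ref{lem:degree-coordinate}; it never bounds degrees from above. Bounded $a$ is therefore not an exotic case to be excluded. It is the extremal regime.

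\textbf{The missing idea.} Only the $B$-side needs Lemma~\ref{lem:degree-coordinate}, and there its hypothesis holds automatically. For $v\in B^*$, every neighbour outside $A$ comes from an edit error, so $(1-\eta^2)a\le d_H(v)\le a+D$. Since $a\le\sqrt{(1+o(1))h}$ and $D=o(h)$, this gives $d_H(v)<\varepsilon h/8$.

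The bound on $A^*$ is then transferred through the eigen-equation, not through the core property. By \eqref{eq:good-sets}, $d_{B^*}(u)\ge(1-2\eta^2)b$ for $u\in A^*$. Hence $\rho x_u\ge\sum_{v\in N_{B^*}(u)}x_v\ge(1-2\eta^2)b\sqrt{(1-3\eta^2)a/(2h)}$. Squaring and using $\rho^2=(1+o(1))h$ and $ab=(1+o(1))h$ gives $x_u^2\ge(1-C\eta^2)b/(2h)$. This is the paper's proof.

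\textbf{The $-8k$ term.} Absorbing it on the $B$-side needs $a\gg k/\eta^2$, which you tried to obtain from the false sublemma. The paper also uses this tacitly. That is harmless there, because the lemma is only invoked later when $a\to\infty$ (Lemmas~\ref{lem:4.4} and~\ref{lem3.15}).

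If you want the statement for bounded $a$ as well, the degree-coordinate route cannot give it. Use instead $\|\boldsymbol{x}-\boldsymbol{y}\|_2=o(1)$ from the proof of Lemma~\ref{lem:mass-stability}. That pins $x_u=(2a)^{-1/2}+o(1)$ for each $u\in A$, and the $B^*$ bound then follows from $\rho x_v\ge\sum_{u\in N_A(v)}x_u$.
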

\begin{proof}
Take $v\in B^*$. Then $d_A(v)\ge (1-\eta^2)a$.
Since $a^2\le ab=(1+o(1))h$, it follows that $a=O(\sqrt{h})=o(h)$.
Recall $\varepsilon\ll \eta^4\ll 1$. Every neighbor of $v$ outside $A$ is incident to an edit-error edge, so $d_H(v)\le a+D<\frac{\varepsilon}{8} h$ for large $h$. Lemma \ref{lem:degree-coordinate} gives
\begin{equation}
x_v^2 \ge (1-20\varepsilon)\frac{d_H(v)-8k}{2h}
\ge (1-2\varepsilon)(1-\eta^2)\frac{a}{2h}
\ge (1-3\eta^2)\frac{a}{2h}.
\label{eq:22}
\end{equation}

Now take $u\in A^*$. Then $d_B(u)\ge (1-\eta^2)b$.
Since $|B\setminus B^*|=o(b)$ by (\ref{eq:good-sets}), we have $d_{B^*}(u)\ge (1-2\eta^2)b$.
Applying the lower bound from \eqref{eq:22} to each $v\in N_{B^*}(u)$ and invoking the eigen-equation at $u$, we arrive at
\[
\rho x_u \ge \sum_{v\in N_{B^*}(u)} x_v
\ge (1-2\eta^2)b\sqrt{(1-3\eta^2)\frac{a}{2h}}.
\]
Squaring and using \(ab=(1+o(1))h\) together with $\rho^2=(1+o(1))h$, we further obtain
\begin{equation}
x_u^2 \ge (1-C\eta^2)\frac{b}{2h}
\label{eq:23}
\end{equation}
for some constant $C\ge 3$. Combining \eqref{eq:22} and \eqref{eq:23} establishes the first assertion.

Summing \eqref{eq:22} over $B^*$ and \eqref{eq:23} over $A^*$, and using the estimates
$|A^*|=(1-o(1))a$, $|B^*|=(1-o(1))b$ together with $ab=(1+o(1))h$, we obtain
\begin{equation}
\min\left\{\sum_{u\in A^*}x_u^2,\;\sum_{v\in B^*}x_v^2\right\}
\ge (1-C\eta^2)(1-o(1))\frac{ab}{2h}
\ge \frac12\bigl(1-C\eta^2-o(1)\bigr),
\label{eq:24}
\end{equation}
as desired.
\end{proof}
\begin{lemma}
\label{lem:mass-stability}
 Uniformly for all
$S\subseteq A$ and $T\subseteq B$,
\begin{equation}
 \sum_{u\in S}x_u^2\le\frac{|S|}{2a}+o(1),\qquad
 \sum_{v\in T}x_v^2\le\frac{|T|}{2b}+o(1),
\label{eq:subset-mass}
\end{equation}
and
\begin{equation}
 \sum_{w\in R}x_w^2=o(1).
\label{eq:R-mass}
\end{equation}
\end{lemma}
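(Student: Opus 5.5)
The plan is to get upper bounds on Perron mass from the eigen-equation, and to compare them with the lower bounds of Lemma~\ref{lem:4.2}. The key tool is the identity $\rho = 2\sum_{uv\in E(H)}x_ux_v$. We split this sum according to the decomposition $E(H)\subseteq E(K_{A,B})\cup\{\text{edit-error edges}\}$.

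First, we bound the contribution of the $D=o(h)$ edit-error edges present in $H$. I would use Lemma~\ref{lem:frobenius}: the adjacency matrix $M$ of these extra edges has $2D'$ unit entries with $D'\le D$. Hence $\boldsymbol{x}^{T}M\boldsymbol{x}\le\|M\|\le\sqrt{2D}=o(\sqrt h)$. Writing $\alpha=\sum_{A}x_u^2$, $\beta=\sum_B x_v^2$ and $\gamma=\sum_R x_w^2$, Cauchy--Schwarz gives
\[
\boldsymbol{x}^{T}A(K_{A,B})\boldsymbol{x}=2\Bigl(\sum_A x_u\Bigr)\Bigl(\sum_B x_v\Bigr)\le 2\sqrt{a\alpha}\sqrt{b\beta}.
\]
Therefore
\[
\rho\le 2\sqrt{ab\,\alpha\beta}+o(\sqrt h).
\]
Since $\rho\ge\sqrt h(1-o(1))$ and $ab=(1+o(1))h$, this yields $4\alpha\beta\ge 1-o(1)$. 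Combined with $\alpha+\beta+\gamma=1$ and AM--GM ($4\alpha\beta\le(\alpha+\beta)^2$), we get $\alpha+\beta=1-o(1)$, which is~\eqref{eq:R-mass}, together with $\alpha,\beta=\tfrac12+o(1)$.

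Second, for the subset bounds, I expect that near-equality in Cauchy--Schwarz forces $\boldsymbol{x}$ to be nearly constant on $A$ and on $B$ in $\ell^2$. Concretely, $(\sum_A x_u)^2\ge(1-o(1))a\alpha$, so $\sum_{A}(x_u-\bar x_A)^2=o(1)$, where $\bar x_A=\sum_A x_u/a$ and $\bar x_A^2=(1+o(1))/(2a)$. Then for any $S\subseteq A$,
\[
\sum_S x_u^2\le\Bigl(\sqrt{|S|}\,\bar x_A+\Bigl(\sum_S(x_u-\bar x_A)^2\Bigr)^{1/2}\Bigr)^2 .
\]
The right-hand side is at most $\frac{|S|}{2a}(1+o(1))+o(1)+2\sqrt{|S|/(2a)}\,o(1)$, which is $\frac{|S|}{2a}+o(1)$ uniformly in $S$ because $|S|\le a$. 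The same argument applies to $T\subseteq B$.

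The main obstacle is making the first step quantitative, i.e.\ extracting $\ell^2$-concentration rather than just a bound on totals. The deficiency in Cauchy--Schwarz equals $a\sum_A(x_u-\bar x_A)^2$. So I must check that the $o(1)$ slack in $4\alpha\beta\ge1-o(1)$ really translates into $\sum_A(x_u-\bar x_A)^2=o(1)$ after dividing by $a$. Writing $(\sum_A x_u)^2=a\alpha-a\sum_A(x_u-\bar x_A)^2$ and substituting into $\rho\le 2\sqrt{(\sum_A x_u)^2(\sum_B x_v)^2}+o(\sqrt h)$, the $ab$ normalization gives exactly $\alpha-\sum_A(x_u-\bar x_A)^2\ge \tfrac12-o(1)$. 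This closes the argument.
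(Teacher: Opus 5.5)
Your proof is correct, but it takes a different route from the paper.

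\textbf{The paper's route.} The paper compares $A(H)$ with $A(J)$, where $J$ is $K_{A,B}$ plus $|R|$ isolated vertices. It applies $\|A(H)-A(J)\|\le\sqrt{2D}$ to the full edit difference, added and missing edges alike. It then uses the rank-two spectral decomposition $A(J)=\sqrt{ab}\,\boldsymbol{y}\boldsymbol{y}^{\mathsf T}-\sqrt{ab}\,\boldsymbol{y'}\boldsymbol{y'}^{\mathsf T}$, with $\boldsymbol{y}=\frac{1}{\sqrt{2a}}\1_A+\frac{1}{\sqrt{2b}}\1_B$. This gives $(\boldsymbol{y}^{\mathsf T}\boldsymbol{x})^2\ge1-o(1)$, hence $\|\boldsymbol{x}-\boldsymbol{y}\|_2=o(1)$. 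All three bounds then follow in one line from $\sum_{v\in X}|x_v^2-y_v^2|\le\|\boldsymbol{x}-\boldsymbol{y}\|_2\|\boldsymbol{x}+\boldsymbol{y}\|_2$, uniformly in $X$.

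\textbf{Your route.} You use $\boldsymbol{x}\ge0$ to discard the missing cross edges, so only the added edit-error edges need the Frobenius bound. In place of the spectral decomposition you use Cauchy--Schwarz on $A$ and $B$, together with the exact variance identity $(\sum_A x_u)^2=a(\alpha-V_A)$, where $V_A=\sum_A(x_u-\bar x_A)^2$. The step you call ``gives exactly'' does close:
\begin{itemize}
\item from $4(\alpha-V_A)(\beta-V_B)\ge1-o(1)$ and AM--GM with $(\alpha-V_A)+(\beta-V_B)=1-\gamma-V_A-V_B$, you get $\gamma+V_A+V_B=o(1)$;
\item consequently $a\bar x_A^2=\tfrac12+o(1)$ and $b\bar x_B^2=\tfrac12+o(1)$;
\item Minkowski's inequality with $|S|\le a$ then gives the uniform subset bound.
\end{itemize}

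\textbf{Comparison.} Your argument is more elementary. It is the same computation the paper itself uses later in the bounded-$a$ case of Lemma~\ref{lem3.15}. In effect you also prove $\|\boldsymbol{x}-\boldsymbol{y}\|_2=o(1)$, but you assemble it from block means and within-block variances rather than reading it off the eigen-decomposition. The paper's version does not rely on the sign structure of the edit difference, and it packages everything into a single closeness statement.

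Two small remarks on your write-up. Your first step is subsumed by the second, since $V_A,V_B\ge0$, so the two can be merged into one AM--GM application. The appeal to Lemma~\ref{lem:4.2} in your opening sentence is never actually used.
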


\begin{proof}
Let $J$ be $K_{A,B}$ together with $|R|$ isolated vertices.  By
Lemma~\ref{lem:frobenius},
\[
 \|A(H)-A(J)\|\le\sqrt{2D}=o(\sqrt h)=o(\sqrt{ab}).
\]
Let
\(
 \boldsymbol{y}=\frac{1}{\sqrt{2a}}\1_A+
   \frac{1}{\sqrt{2b}}\1_B,
\)
where \[
    (\boldsymbol{1}_A)_v=
    \begin{cases}
        1, & v\in A,\\
        0, & v\notin A,
    \end{cases}
    \ \ \ \
    (\boldsymbol{1}_B)_v=
    \begin{cases}
        1, & v\in B,\\
        0, & v\notin B.
    \end{cases}
    \]
The nonzero eigenvalues of $A(J)$ are $\sqrt{ab}$ and $-\sqrt{ab}$.
Since
\[
 \boldsymbol{x}^{\mathsf T}A(J)\boldsymbol{x}
 = \boldsymbol{x}^{\mathsf T}A(H)\boldsymbol{x}- \boldsymbol{x}^{\mathsf T}(A(H)-A(J))\boldsymbol{x}\ge\rho(H)-\|A(H)-A(J)\|
 \geq(1-o(1))\sqrt{ab}.
\]
Let $\boldsymbol{y'}=\frac{1}{\sqrt{2a}}\mathbf{1}_A-\frac{1}{\sqrt{2b}}\mathbf{1}_B$. Then $\boldsymbol{y}$ is the unit eigenvector for $\lambda_1=\sqrt{ab}$, and $\boldsymbol{y'}$ is the unit eigenvector for $\lambda_2=-\sqrt{ab}$. The eigenspace of the zero eigenvalue is orthogonal to $\boldsymbol{y}$ and $\boldsymbol{y'}$. Therefore, $A(J)$ admits the spectral decomposition
\[
A(J) = \sqrt{ab}\, \boldsymbol{y}\boldsymbol{y}^{\mathsf T} - \sqrt{ab}\, \boldsymbol{y'}\boldsymbol{y'}^{\mathsf T} + 0\cdot \bigl(\text{orthogonal projection onto } \ker A(J)\bigr).
\]
Here, \(\ker A(J) = \{\boldsymbol{v} : A(J)\boldsymbol{v} = \boldsymbol{0}\}.\)

Decompose \(\boldsymbol x\) with respect to \( \boldsymbol y\), \( \boldsymbol{y'}\), and $\ker A(J)$:
\[
 \boldsymbol x = ( \boldsymbol y^{\mathsf T}  \boldsymbol x) \boldsymbol y + ( \boldsymbol{y'}^{\mathsf T}  \boldsymbol x) \boldsymbol{y'} +  \boldsymbol {x_0},
\]
where \( \boldsymbol {x_0} \in \ker A(J)\), and
\(
\| \boldsymbol x\|^2 = ( \boldsymbol y^{\mathsf T}  \boldsymbol x)^2 + ( \boldsymbol y'^{\mathsf T}  \boldsymbol x)^2 + \| \boldsymbol{x_0}\|^2 = 1.
\)
Then
\(
 \boldsymbol x^{\mathsf T} A(J) \boldsymbol x = \sqrt{ab}\,( \boldsymbol y^{\mathsf T}  \boldsymbol x)^2 - \sqrt{ab}\,( \boldsymbol y'^{\mathsf T}  \boldsymbol x)^2.
\)
Since \( \boldsymbol x^{\mathsf T} A(J) \boldsymbol x \ge (1 - o(1))\sqrt{ab}\), we have
\(
\sqrt{ab}\bigl[( \boldsymbol y^{\mathsf T}  \boldsymbol x)^2 - ( \boldsymbol y'^{\mathsf T}  \boldsymbol x)^2\bigr] \ge (1 - o(1))\sqrt{ab}.
\)
Cancelling \(\sqrt{ab}\) yields
\begin{equation}
( \boldsymbol y^{\mathsf T}  \boldsymbol x)^2 - ( \boldsymbol y'^{\mathsf T}  \boldsymbol x)^2 \ge 1 - o(1).
\label{eq:*}
\end{equation}
On the other hand, by the unit property,
\begin{equation}
( \boldsymbol y^{\mathsf T}  \boldsymbol x)^2 +( \boldsymbol y'^{\mathsf T}  \boldsymbol x)^2\le 1.
\label{eq:**}
\end{equation}
Comparing (\ref{eq:*}) and (\ref{eq:**}) gives us
\(
1 - o(1) \le( \boldsymbol y^{\mathsf T}  \boldsymbol x)^2 - ( \boldsymbol y'^{\mathsf T}  \boldsymbol x)^2 \le ( \boldsymbol y^{\mathsf T}  \boldsymbol x)^2 + ( \boldsymbol y'^{\mathsf T}  \boldsymbol x)^2 \le 1.
\)
Hence,
\(
(\boldsymbol y'^{\mathsf T} \boldsymbol x)^2 = o(1),\) and \((\boldsymbol y^{\mathsf T} \boldsymbol x)^2 = 1 - o(1).
\)
Therefore,
\(
\boldsymbol y^{\mathsf T} \boldsymbol x = 1 - o(1).
\) Consequently,
\[
\|\boldsymbol x-\boldsymbol y\|_2^2 = \|\boldsymbol x\|^2 + \|\boldsymbol y\|^2 - 2\boldsymbol y^{\mathsf T} \boldsymbol x = 1 + 1 - 2(1 - o(1)) = o(1).
\]
Therefore,
\(
\|\boldsymbol x-\boldsymbol y\|_2 = o(1).
\)
Thus, \eqref{eq:subset-mass} and
\eqref{eq:R-mass} follow from
\[
\sum_{v\in X}x^2_v\leq\sum_{v\in X}|x_v^2-y_v^2|+\sum_{v\in X}y_v^2,\ \  \sum_{v\in X}|x_v^2-y_v^2|
 \le\|x-y\|_2\|x+y\|_2=o(1),
\]
uniformly in $X$.
\end{proof}
\begin{lemma}
\label{lem:local-wheel}
For every $w\in V(H)$, if
$d_B(w)>4\eta^2b$, then
\(
 d_A(w)\le\eta^2a+k-1.
\)
If $a\to\infty$, the symmetric assertion also holds.
Consequently, when $a\to\infty$,
\(
 d_A(u)\le2\eta^2a\) for all  \(u\in A\),   and
 \(d_B(v)\le2\eta^2b\)   for all  \(v\in B.
\)
\end{lemma}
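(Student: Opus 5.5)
The plan is to argue by contradiction: a vertex $w$ with many neighbours on both sides of the near-complete bipartite structure would have a $C_{2k}$ in its neighbourhood, hence $H$ would contain $W_{2k+1}=K_1\vee C_{2k}$. Since $k\ge 3$, a cycle alternating between $k$ vertices of $A$ and $k$ vertices of $B$ has length $2k\ge 4$ and is a genuine cycle. So it suffices to find $k$ vertices of $N_A(w)$ and $k$ vertices of $N_B(w)$ that span such an alternating cycle. No such cycle uses $w$ itself, since $w\notin N(w)$.

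\emph{Step 1 (choosing the $A$-vertices).} Fix $\gamma:=\eta^2/k$ and work with the good set $A_\gamma$. Suppose $d_B(w)>4\eta^2 b$ and $d_A(w)>\eta^2a+k-1$. By \eqref{eq:good-sets}, $|A\setminus A_\gamma|=o(a)$.
\begin{itemize}
\item If $a\to\infty$, then $o(a)\le\eta^2 a$, so $|N_A(w)\cap A_\gamma|>k-1$.
\item If $a$ stays bounded, then $o(a)<1$ forces $A_\gamma=A$ for large $h$, and the same bound holds.
\end{itemize}
Either way we can pick distinct $u_1,\dots,u_k\in N_A(w)\cap A_\gamma$.

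\emph{Step 2 (choosing the $B$-vertices).} Each $u_i$ misses at most $\gamma b$ vertices of $B$. Hence the common neighbourhood $N_B(w)\cap\bigcap_i N_B(u_i)$ has size at least
\[
4\eta^2b-k\gamma b=3\eta^2 b .
\]
Since $ab=(1+o(1))h$ and $a\le b$, we have $b\ge(1-o(1))\sqrt h\to\infty$. So this common neighbourhood contains $k$ distinct vertices $v_1,\dots,v_k$.

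\emph{Step 3 (the wheel).} The closed walk $u_1v_1u_2v_2\cdots u_kv_ku_1$ is a $C_{2k}$: each $v_i$ is adjacent to every $u_j$, and all $2k$ vertices are distinct and lie in $N(w)$. Together with $w$ it forms $W_{2k+1}$ in $H\subseteq G$, a contradiction. This proves the first assertion.

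\emph{Step 4 (symmetric case and consequences).} For the symmetric assertion, interchange the roles of $A$ and $B$. Now the common $A$-neighbourhood has size at least $3\eta^2a$, and we need $a\to\infty$ to extract $k$ vertices from it; this is exactly why that hypothesis appears. For the final claims, use \eqref{eq:minimality}:
\begin{itemize}
\item every $u\in A$ has $d_B(u)\ge b/2>4\eta^2 b$, so $d_A(u)\le \eta^2a+k-1\le 2\eta^2 a$ once $a\to\infty$;
\item every $v\in B$ has $d_A(v)\ge a/2>4\eta^2a$, so the symmetric assertion gives $d_B(v)\le\eta^2b+k-1\le 2\eta^2b$.
\end{itemize}

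The only delicate point is Step 1 when $a$ might be bounded: there I rely on the $o(a)$ count in \eqref{eq:good-sets} becoming strictly less than $1$. If that reading is too fragile, I would instead pick the $u_i$ with largest $d_B$-degree and redo the common-neighbourhood count using the total number $D=o(ab)$ of missing cross edges.
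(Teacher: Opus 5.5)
Your proof is correct and follows essentially the paper's argument: you pick $k$ good vertices in $N_A(w)$, close an alternating $C_{2k}$ inside $N(w)$ through common $B$-neighbours, and then use \eqref{eq:minimality} for the consequences. The only differences are that you use the stricter good set $A_{\eta^2/k}$ with one common neighbourhood of all $k$ vertices, where the paper uses $A^*=A_{\eta^2}$ and the triple intersections $N_B(w)\cap N_B(v_i)\cap N_B(v_{i+1})$; also, your worry about bounded $a$ is unnecessary, since $|A\setminus A_\gamma|\le D/(\gamma b)\le \eta^2 a$ holds uniformly for large $h$ because $D=o(ab)$.
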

\begin{proof}
Suppose that $d_B(w) > 4\eta^2 b$ and $d_A(w) > \eta^2 a + k - 1$. Since $|A \setminus A^*| = o(a)$ by \eqref{eq:good-sets}, the set $N_A(w) \cap A^*$ contains $k$ distinct vertices $v_1,\dots,v_k$ for all sufficiently large $h$, with indices taken modulo $k$. Each \(v_i\) misses at most \(\eta^2 b\) vertices of \(B\), and hence
\[
|N_B(w) \cap N_B(v_i) \cap N_B(v_{i+1})| \geq d_B(w) - 2\eta^2 b > 2\eta^2 b.
\]
Since \(\eta\) is constant and \(b \to \infty\) by (\ref{eq:ab}), we may choose distinct \(u_i \in N_B(w) \cap N_B(v_i) \cap N_B(v_{i+1})\). Then, \(v_1 u_1 v_2 u_2 \cdots v_k u_k v_1\) forms a \(2k\)-cycle contained in \(H[N_H(w)]\). This gives a copy of \(W_{2k+1}\) centered at \(w\), a contradiction. This proves the first assertion.

If \(a \to \infty\), the same argument with the roles of \(A\) and \(B\) reversed proves the symmetric assertion. Finally, if \(u \in A\), then \(d_B(u) \geq b/2 > 4\eta^2 b\) by (\ref{eq:minimality}); hence \(d_A(u) \leq \eta^2 a + k - 1 \leq 2\eta^2 a\) for large \(h\). The assertion for vertices of \(B\) follows symmetrically.
\end{proof}

\begin{lemma}
\label{lem:R-degree}
There is a constant $C_R=C_R(k)$ such that
\(
 d_H(w)\le C_R
\) for $w\in R$.
\end{lemma}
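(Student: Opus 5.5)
Our plan is to argue by contradiction, playing the low-degree Perron bound of Lemma~\ref{lem:degree-coordinate} against the mass estimate \eqref{eq:R-mass}, after first splitting on whether $d_H(w)$ is at most $\varepsilon h/8$.

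\emph{Small degree.} Suppose $w\in R$ has $d=d_H(w)\le\varepsilon h/8$. Lemma~\ref{lem:degree-coordinate} gives $2hx_w^2\ge(1-20\varepsilon)d-8k$. We bound $x_w$ from above through the eigen-equation $\rho x_w=\sum_{v\sim w}x_v$, splitting $N(w)$ into $N_A(w)$, $N_B(w)$ and $N_R(w)$.
\begin{itemize}
\item By \eqref{eq:minimality}, $d_A(w)<a/2$ and $d_B(w)<b/2$. Lemma~\ref{lem:mass-stability} then gives
\[
\sum_{v\in N_A(w)}x_v^2\le \frac{d_A(w)}{2a}+o(1)\le \frac14+o(1),
\]
and similarly for $B$.
\item By Cauchy--Schwarz,
\[
\Bigl(\sum_{v\in N_A(w)}x_v\Bigr)^2\le d_A(w)\Bigl(\frac{d_A(w)}{2a}+o(1)\Bigr),
\]
and similarly for $B$ and for $R$, where the $R$-term uses \eqref{eq:R-mass}.
\end{itemize}
Since $\rho^2=(1+o(1))h\approx ab$ and $d_A(w)<a/2$, the $A$-contribution to $x_w^2$ is at most about
\[
\frac{d_A(w)^2/(2a)+o(1)\,d_A(w)}{h},
\]
which is a fraction strictly below $1/2$ of the $d_A(w)/(2h)$ term. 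The same holds for $B$ because $d_B(w)<b/2$, and the $R$-part is $o(d/h)$. Comparing with $x_w^2\ge((1-20\varepsilon)d-8k)/(2h)$, the leading coefficient $\tfrac12(1-20\varepsilon)$ in the lower bound exceeds roughly $\tfrac14+o(1)$ in the upper bound. This yields $c\,d\le 8k+o(d)$, so $d$ is bounded by a constant $C_R(k)$.

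\emph{Two subtleties.} First, the $o(1)$ errors are absolute, not relative to $d$. If they fail to give the needed precision, we would instead use the pointwise bounds $x_v^2\lesssim b/(2h)$ for $v\in A$ and $a/(2h)$ for $v\in B$. These follow from the eigen-equation together with Lemma~\ref{lem:local-wheel}, which shows vertices of $A$ have few $A$-neighbours, and from $d_B(v)\le b+D$-type counting. Second, $R$-neighbours could carry large coordinates. Here we use that $\sum_{R}x^2=o(1)$, and we may iterate, first bounding degrees inside $R$.

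\emph{Large degree.} This is the main obstacle. Suppose $d_H(w)>\varepsilon h/8$. Since $d_A(w)<a/2=O(\sqrt h)$ while $d_H(w)$ is linear in $h$, most of the neighbours must lie in $B\cup R$. Because $d_B(w)<b/2$, there are at least $\varepsilon h/8-a/2-b/2$ neighbours outside $A$, so a linear share of edges are edit-errors or edges inside $B$. We plan to exclude this in one of two ways.
\begin{itemize}
\item \emph{Via minimality of $D$.} If $w$ has $\Theta(h)$ neighbours in $B$, which requires $b=\Theta(h)$ and $a=O(1)$, then moving $w$ into $A$ decreases $D$. This gives $d_B(w)\ge b/2$, contradicting $w\in R$.
\item \emph{Via the edge count.} The edges at $w$ that are not in $K_{A,B}$ number $d_H(w)-d_A(w)$; those into $R$ or inside $B$ are edit-errors counted in $D=o(h)$. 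So $d_H(w)\le d_A(w)+d_B(w)+o(h)$, forcing $b=\Omega(h)$ and $a=O(1)$.
\end{itemize}
In that bounded-$a$ regime, a vertex with $b/2>d_B(w)>\varepsilon h/16$ would carry Perron weight comparable to an $A$-vertex. The Rayleigh/core argument of Lemma~\ref{lem:degree-coordinate}, applied with $d$ replaced by a divisible-by-$k$ subset of its edges, then contradicts $\sum_R x^2=o(1)$. Settling this regime cleanly is where we expect the real work to lie.
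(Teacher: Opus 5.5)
Your overall frame (lower bound from Lemma~\ref{lem:degree-coordinate}, upper bound from Cauchy--Schwarz plus Lemma~\ref{lem:mass-stability}) is the paper's. However, the step that is supposed to produce the contradiction is wrong. Write $\sigma_X=\sum_{v\in N_X(w)}x_v$. Then $\rho^2x_w^2=(\sigma_A+\sigma_B+\sigma_R)^2$, but you bound it as if it were $\sigma_A^2+\sigma_B^2+\sigma_R^2$. Dropping the cross term $2\sigma_A\sigma_B$ is where your ``$\tfrac14$'' comes from. Applied correctly, with one Cauchy--Schwarz over all of $N_H(w)$, these ingredients give only
\[
\frac{1-21\varepsilon}{2}-o(1)\le\sum_{v\in N_H(w)}x_v^2\le\frac{d_A(w)}{2a}+\frac{d_B(w)}{2b}+o(1).
\]
Here \eqref{eq:minimality} only makes the right-hand side less than $\tfrac12+o(1)$, so there is no contradiction. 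Sharper bookkeeping cannot fix this; your fallback with pointwise bounds $x_v^2\lesssim b/(2h)$ and $a/(2h)$ yields the same estimate. Suppose $a\approx b$ and $w$ were joined to just under half of $A^*$ and half of $B^*$. Then $\rho x_w\approx\frac a2\sqrt{b/(2h)}+\frac b2\sqrt{a/(2h)}$ gives $x_w^2\approx d_H(w)/(2h)$, which is exactly the lower bound of Lemma~\ref{lem:degree-coordinate}. So counting Perron mass alone cannot exclude this configuration.

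The missing idea is $W_{2k+1}$-freeness, used through Lemma~\ref{lem:local-wheel}: if $d_B(w)>4\eta^2b$, then $d_A(w)\le\eta^2a+k-1$, since otherwise $N_H(w)$ contains a $C_{2k}$. The paper splits accordingly:
\begin{itemize}
\item If $d_B(w)\le4\eta^2b$, the right-hand side is at most $\tfrac14+2\eta^2+o(1)$.
\item If $d_B(w)>4\eta^2b$ and $a\ge2k$, it is at most $\tfrac12-\tfrac1{4k}+O(\eta^2)+o(1)$.
\item If $a<2k$, integrality in $d_A(w)<a/2$ gives $d_A(w)/(2a)\le\tfrac14-\tfrac1{8k}$.
\end{itemize}
The hierarchy $\varepsilon\ll\eta^4\ll k^{-1}$ then contradicts the lower bound.

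Separately, your large-degree case is vacuous. Since $w\notin A\cup B$, no edge at $w$ lies in $K_{A,B}$, so every edge at $w$ is an edit-error. Hence $d_H(w)\le D=o(h)<\varepsilon h/8$, and your count of only $d_H(w)-d_A(w)$ edit-errors at $w$ is incorrect. Lemma~\ref{lem:degree-coordinate} therefore always applies, and the regime you flag as ``the real work'' never arises.
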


\begin{proof}
Suppose otherwise.  Then along a sequence of counterexamples there is
$w\in R$ with $d_H(w)\to\infty$.  Since every edge incident with $R$ is
an edit-error, $d_H(w)\le D=o(h)$, and
Lemma~\ref{lem:degree-coordinate} applies.  Cauchy--Schwarz inequality and the
eigen-equation give
\begin{equation}
 \sum_{v\in N_H(w)}x_v^2
 \ge\frac{\rho^2x_w^2}{d_H(w)}
 \ge\frac{1-21\varepsilon}{2}-o(1).
\label{eq:R-lower}
\end{equation}

If $d_B(w)\le4\eta^2b$, then
Lemma~\ref{lem:mass-stability} and~(\ref{eq:minimality}) give
\[
 \sum_{v\in N_H(w)}x_v^2
 \le\frac{d_A(w)}{2a}+\frac{d_B(w)}{2b}+o(1)
 \le\frac14+2\eta^2+o(1),
\]
contradicting~\eqref{eq:R-lower}.

Suppose $d_B(w)>4\eta^2b$.  If $a\ge2k$, then
Lemma~\ref{lem:local-wheel} gives
\[
 \frac{d_A(w)}{2a}
 \le\frac{\eta^2}{2}+\frac{k-1}{2a}
 \le\frac14-\frac1{4k}+\frac{\eta^2}{2}.
\]
Together with $d_B(w)<b/2$ and Lemma \ref{lem:mass-stability}, this gives $\sum_{v\in N_H(w)}x_v^2
 \le1/2-1/(4k)+O(\eta^2)+o(1)$, again contradicting
\eqref{eq:R-lower}.  If $a<2k$, integrality and $d_A(w)<a/2$ give
\[
 \frac{d_A(w)}{2a}\le\frac14-\frac1{8k},
\]
and the same contradiction follows.  The hierarchy
$\varepsilon\ll\eta^4\ll k^{-1}$ completes the proof.
\end{proof}
By Lemma \ref{lem:R-degree}, we obtain \(x_w\leq \frac{C_R}{\rho}x_{u^*}<x_{u^*}\) for large $h$. So that, \(u^*\in A\) or \(B\).
Recall that $A^*=A_{\eta^2}=\{u\in A:d_B(u)\ge(1-\eta^2)b\}$ and $B^*=B_{\eta^2}=\{u\in B:d_A(u)\ge(1-\eta^2)a\}$.
{
\begin{lemma}
\label{lem:4.4}
Suppose that $a \to \infty$. Then
\begin{equation}\label{eq:29}
\begin{split}
x_A^* &\le \bigl(1+O(\eta^2)+o(1)\bigr)\sqrt{\frac{b}{2h}} + \bigl(O(\eta^2)+o(1)\bigr)x^*, \\
x_B^* &\le \bigl(1+O(\eta^2)+o(1)\bigr)\sqrt{\frac{a}{2h}} + \bigl(O(\eta^2)+o(1)\bigr)x^*.
\end{split}
\end{equation}
\end{lemma}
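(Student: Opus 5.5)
Here $x_A^*=\max_{u\in A}x_u$, $x_B^*=\max_{v\in B}x_v$ and $x^*=x_{u^*}$. The plan is to bound the maximizer on each side through a single eigen-equation, split according to where the neighbours lie.

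Take $u\in A$ with $x_u=x_A^*$ and write
\[
\rho x_u=\sum_{v\in N_{B}(u)}x_v+\sum_{v\in N_A(u)}x_v+\sum_{v\in N_R(u)}x_v .
\]
The three sums are handled separately.

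\emph{The $B$-part.} Cauchy--Schwarz and Lemma~\ref{lem:mass-stability} with $T=N_B(u)\subseteq B$ give
\[
\sum_{v\in N_B(u)}x_v\le \sqrt{d_B(u)}\Bigl(\frac{d_B(u)}{2b}+o(1)\Bigr)^{1/2}\le b\Bigl(\frac1{2b}+o(1)\cdot\frac1{b}\Bigr)^{1/2}.
\]
This is a little delicate because the $o(1)$ is an absolute error, not a relative one. So I would instead write $\sum_{v\in N_B(u)}x_v\le \sqrt{b}\,\bigl(\sum_{v\in B}x_v^2\bigr)^{1/2}$ and use $\sum_{v\in B}x_v^2\le \tfrac12+o(1)$, which is the case $T=B$ of \eqref{eq:subset-mass}. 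The result is $\sum_{v\in N_B(u)}x_v\le (1+o(1))\sqrt{b/2}$. Dividing by $\rho=(1+o(1))\sqrt h$ and using $ab=(1+o(1))h$ gives $(1+o(1))\sqrt{b/(2h)}$, which is the main term.

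\emph{The $A$-part.} Since $a\to\infty$, Lemma~\ref{lem:local-wheel} gives $d_A(u)\le 2\eta^2a$. Bounding each term by $x^*$ loses too much, because $2\eta^2 a x^*/\rho$ need not be small compared with $x^*$ unless $a=o(\sqrt h)$. I therefore use Cauchy--Schwarz together with Lemma~\ref{lem:mass-stability} for $S=N_A(u)$:
\[
\sum_{v\in N_A(u)}x_v\le\sqrt{2\eta^2a}\Bigl(\eta^2+o(1)\Bigr)^{1/2}=O(\eta^2)\sqrt a+o(\sqrt a).
\]
Since $\sqrt a/\rho\le (1+o(1))\sqrt{a/h}\le(1+o(1))\sqrt{b/h}$ (recall $a\le b$), this term is absorbed into the factor $O(\eta^2)+o(1)$ multiplying $\sqrt{b/(2h)}$.

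\emph{The $R$-part.} Every edge from $u$ to $R$ is an edit error, so there are at most $D=o(h)$ such edges. A direct Cauchy--Schwarz estimate gives $\sum_{N_R(u)}x_v\le\sqrt{d_R(u)}\,o(1)$, but $d_R(u)$ could be as large as $o(h)$, so this is insufficient on its own. I would bound $d_R(u)$ using \eqref{eq:minimality}, the wheel-freeness, and Lemma~\ref{lem:R-degree}; the latter gives $x_w\le C_R x^*/\rho$ for $w\in R$. The $R$-sum is then at most $d_R(u)C_Rx^*/\rho$, and it suffices that $d_R(u)=o(\rho)$ or $O(\eta^2\rho)$. If that is not available directly, I would fall back on the bound $\sqrt{d_R(u)}\,(\sum_R x_w^2)^{1/2}$ combined with $d_R(u)\le d_H(u)$. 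Getting the $R$-contribution (and, symmetrically, any excess in the $A$-contribution) into the form $(O(\eta^2)+o(1))x^*$ is the main obstacle, and this is exactly why that term appears in \eqref{eq:29}.

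The bound for $x_B^*$ follows by the symmetric argument: swap $A$ and $B$ and use the second half of Lemma~\ref{lem:local-wheel}, which requires $a\to\infty$.
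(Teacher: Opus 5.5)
Your one-step argument takes a different route from the paper's and it can be completed, but as written two steps are left open.

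\emph{The $R$-term.} The obstacle you flag comes from dropping a factor of $\rho$. From $\rho x_u=\sum_{v\in N_H(u)}x_v$ and $x_w\le C_Rx^*/\rho$ for $w\in R$, the $R$-neighbours contribute at most $C_R\,d_R(u)\,x^*/\rho^2$ to $x_u$. So you need $d_R(u)=o(\rho^2)=o(h)$, not $o(\rho)$. That is exactly the bound $d_R(u)\le D=o(h)$ you already noted, since every $u$--$R$ edge is an edit error. Hence the $R$-contribution is $o(1)x^*$. Your fallback via $\sum_{w\in R}x_w^2=o(1)$ would not suffice. It yields only an absolute $o(1)$, which is not small compared with $\sqrt{b/(2h)}$; that quantity itself tends to $0$, because $b/h=(1+o(1))/a$.

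\emph{The $B$-side is not symmetric.} For $x_A^*$ you absorbed the internal term into the main term using $a\le b$. For the maximizer $v\in B$, the internal term is $\rho^{-1}\sum_{w\in N_B(v)}x_w\le(O(\eta^2)+o(1))\sqrt{b/(2h)}$. When $b\gg a$ this can be far larger than $\sqrt{a/(2h)}$, so it cannot go into the main term. It must go into the $x^*$-term, and for that you need Lemma~\ref{lem:4.2}. Any $u\in A^*$ (nonempty by \eqref{eq:good-sets}) gives $x^*\ge x_u\ge(1-C\eta^2)^{1/2}\sqrt{b/(2h)}$, hence $\sqrt{b/(2h)}\le(1+O(\eta^2))x^*$. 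With both repairs your argument proves the lemma. For $x_A^*$ it even gives the sharper bound $x_A^*\le(1+O(\eta^2)+o(1))\sqrt{b/(2h)}+o(1)x^*$.

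\emph{Comparison with the paper.} The paper takes $u_0\in A^*$ of minimal coordinate. It bounds $x_{u_0}$ by averaging the $A$-mass, which is at most $\frac12(1+C\eta^2+o(1))$ by Lemma~\ref{lem:4.2}, over $|A^*|=(1-o(1))a$ vertices. It then controls the gap by the two-step eigen-equation
\[
\rho^2(x_{u^*}-x_{u_0})\le x^*\sum_{w\in N_H(u^*)\setminus N_H(u_0)}d_H(w)\le(3\eta^2ab+2D)\,x^*.
\]
This degree count handles all edit-error incidences, including edges to $R$, through the crude bound $2D$. So it needs neither Lemma~\ref{lem:R-degree} nor the subset form of Lemma~\ref{lem:mass-stability}, and it is genuinely symmetric in $A$ and $B$. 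The price is an $(O(\eta^2)+o(1))x^*$ error on both lines. Your Cauchy--Schwarz approach instead needs separate handling of the $R$-neighbours and an extra appeal to Lemma~\ref{lem:4.2} on the $B$ side, and in return gives a cleaner bound for $x_A^*$.
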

\begin{proof}
Since $a\to\infty$, the assumption $a\le b$ and \(ab=(1+o(1))h\) imply $a+b = o(h)$.
Lemma~\ref{lem:local-wheel} gives $d_A(u)\le 2\eta^2 a$ for $u\in A$ and $d_B(v)\le 2\eta^2 b$ for $v\in B$.

Choose $u_0\in A^*$ to be a vertex attaining the minimum Perron coordinate over $A^*$.
Then $x_{u_0}^2 \le \frac{1}{|A^*|}\sum_{u\in A^*}x_u^2 \le \frac{1}{|A^*|}\sum_{u\in A}x_u^2$.
Recall from (\ref{eq:ab}) that $ab=(1+o(1))h$, and from (\ref{eq:good-sets}) that $|A^*|=(1-o(1))a$.
Combining this with \(\sum_{u\in A}x_u^2\leq 1-\sum_{v\in B^*}x_v^2\leq \frac{1}{2}(1+C\eta^2+o(1))\) (by Lemma \ref{lem:4.2}) yields
\begin{equation}
\label{eq:30}
x_{u_0} \le \bigl(1+O(\eta^2)+o(1)\bigr)\sqrt{\frac{b}{2h}}.
\end{equation}

Choose $u^*\in A$ such that $x_{u^*}=x_A^*$.
Subtracting the eigen-equations at $u^*$ and $u_0$ gives
$\rho(x_{u^*}-x_{u_0}) \le \sum_{w\in N_H(u^*)\setminus N_H(u_0)} x_w$.
Applying the eigen-equation once more yields
\[
\rho^2(x_{u^*}-x_{u_0}) \le \sum_{w\in N_H(u^*)\setminus N_H(u_0)} \rho x_w
\le \sum_{w\in N_H(u^*)\setminus N_H(u_0)} d_H(w)\,x^*.
\]

We now estimate the degree sum $\sum_{w\in N_H(u^*)\setminus N_H(u_0)}d_H(w)$.
Its regular $A$--$B$ incidences are at most $d_A(u^*)b + |N_B(u^*)\setminus N_B(u_0)|\,a$.
Since $u_0\in A^*$, we have $d_B(u_0)\ge (1-\eta^2)b$.
The vertices in $N_B(u^*)\setminus N_B(u_0)$ belong to the set of vertices in $B$ not adjacent to $u_0$, whose size is thus at most $\eta^2 b$.
Meanwhile, Lemma \ref{lem:local-wheel} yields $d_A(u^*)\le 2\eta^2 a$.
Consequently,
\[
d_A(u^*)b + |N_B(u^*)\setminus N_B(u_0)|\,a \le 3\eta^2 ab.
\]
On the other hand, every remaining incidence comes from an edit-error edge, and each such edge is counted at most twice.
Therefore,
\[
\rho^2(x_{u^*}-x_{u_0}) \le (3\eta^2 ab + 2D)\,x^*.
\]
Combining $\rho^2>h$, $ab=(1+o(1))h$, and $D=o(h)$ with \eqref{eq:30}, we obtain the first inequality in \eqref{eq:29}.
The second follows by symmetry.
\end{proof}}
The following lemma is due to Fang, Zhai, and Zhang \cite[Lemmas 4.5 and 4.6]{FangZhaiZhang}.
\begin{lemma}[Fang--Zhai--Zhang \cite{FangZhaiZhang}]
\label{lem:localization}
Suppose $b<100a$. Then
\begin{enumerate}[label=\rm(\roman*)]
\item $x_A^*\le(1+O(\eta^2)+o(1))\sqrt{\frac{b}{2h}},\
 x_B^*\le(1+O(\eta^2)+o(1))\sqrt{\frac{a}{2h}},$
where $x_A^*=\max_{u\in A}x_u$ and $x_B^*=\max_{u\in B}x_u$.
\item \(
 d_B(u)>(1-\eta)b\) for \(u\in A,\) and \(
 d_A(v)>(1-\eta)a\) for \(v\in B\).
\end{enumerate}
\end{lemma}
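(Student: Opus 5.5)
Both parts rest on one observation. Since $b<100a$ and $ab=(1+o(1))h$, we have $a\ge (1+o(1))\sqrt{h}/10\to\infty$. Hence Lemma~\ref{lem:4.4} and the "$a\to\infty$" clauses of Lemma~\ref{lem:local-wheel} are available. Moreover, the two scales $\sqrt{b/2h}$ and $\sqrt{a/2h}$ agree up to a factor at most $10$. This comparability is what lets every error term be absorbed into a constant depending only on the number $100$.

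\emph{Part (i).} I would remove the $x^*$-term from \eqref{eq:29}. By Lemma~\ref{lem:R-degree}, $x_w\le C_Rx^*/\rho<x^*$ for $w\in R$, so $x^*=\max\{x_A^*,x_B^*\}$. Suppose first that $x^*=x_A^*$. The first line of \eqref{eq:29} then reads $(1-O(\eta^2)-o(1))x^*\le(1+O(\eta^2)+o(1))\sqrt{b/2h}$, so $x^*=O(\sqrt{b/2h})$. Since $b<100a$, this also gives $x^*=O(\sqrt{a/2h})$. Substituting back, the terms $(O(\eta^2)+o(1))x^*$ in both lines of \eqref{eq:29} are $(O(\eta^2)+o(1))\sqrt{b/2h}$ and $(O(\eta^2)+o(1))\sqrt{a/2h}$ respectively, which gives (i). The case $x^*=x_B^*$ is symmetric.

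\emph{Part (ii).} I would sandwich $x_u$ between two bounds that are both proportional to $d_B(u)$. Fix $u\in A$ and put $d=d_H(u)$.

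First, the lower bound. Since $d\le a+b+D=o(h)$, Lemma~\ref{lem:degree-coordinate} gives $2hx_u^2\ge(1-20\varepsilon)d-8k\ge(1-20\varepsilon)d_B(u)-8k$.

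Next, the upper bound. From the eigen-equation,
\[
\rho x_u\le d_B(u)x_B^*+d_A(u)x_A^*+\sum_{w\in N_R(u)}x_w .
\]
I would bound the three terms as follows.
\begin{itemize}
\item By (i), the first term is at most $(1+O(\eta^2)+o(1))\,d_B(u)\sqrt{a/2h}$.
\item By Lemma~\ref{lem:local-wheel}, $d_A(u)\le 2\eta^2a$. Together with (i) and $a\le b$, the second term is at most $O(\eta^2)\,a\sqrt{b/2h}=O(\eta^2)\,b\sqrt{a/2h}$.
\item By Cauchy--Schwarz, $d_R(u)\le D$ and \eqref{eq:R-mass}, the third term is at most $\sqrt{D}\cdot o(1)=o(\sqrt h)$. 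Since $b\sqrt{a/2h}=\Theta(\sqrt{h})$, this is $o(1)\cdot b\sqrt{a/2h}$.
\end{itemize}
Because $d_B(u)\ge b/2$ by \eqref{eq:minimality}, the last two terms are $(O(\eta^2)+o(1))\,d_B(u)\sqrt{a/2h}$. Therefore $x_u\le(1+O(\eta^2)+o(1))\,d_B(u)\sqrt{a/2h}/\rho$.

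Combining the two bounds and using $\rho^2\ge h$, then dividing by $d_B(u)\ge b/2\to\infty$ so that $8k$ is negligible, I expect to obtain
\[
d_B(u)\ \ge\ (1-O(\eta^2)-o(1))\,\frac{h}{a}\ =\ (1-O(\eta^2)-o(1))\,b .
\]
Since $\eta^2\ll\eta$, this exceeds $(1-\eta)b$ for large $h$. The claim for $v\in B$ follows by the symmetric argument, which uses the symmetric half of Lemma~\ref{lem:local-wheel}, valid because $a\to\infty$.

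The step I expect to be the main obstacle is the upper bound on $x_u$. All error contributions must come out as a factor $1+O(\eta^2)$, not merely $O(1)$, since the final gap available is only $\eta$ against $\eta^2$. The delicate terms are the $A$-neighbours and the possibly many $R$-neighbours. The comparability $b<100a$ is used exactly to convert $a\sqrt{b/2h}$ into $O(b\sqrt{a/2h})$ and $x^*$ into the appropriate scale. The $R$-term is controlled by mass stability \eqref{eq:R-mass}, not by degrees.
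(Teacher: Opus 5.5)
Part (i) is fine: it follows from \eqref{eq:29} exactly as you describe. Your sandwich strategy for (ii) is also a sound route: you play the core lower bound of Lemma~\ref{lem:degree-coordinate} against the eigen-equation upper bound. The paper itself gives no proof of this lemma and only cites Fang--Zhai--Zhang.

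The step that controls the $R$-neighbours fails as written, because of a scaling error. The quantity $b\sqrt{a/2h}$ is not $\Theta(\sqrt h)$. In fact $b\sqrt{a/2h}=\sqrt b\,\sqrt{ab/2h}=(1+o(1))\sqrt{b/2}=\Theta(h^{1/4})$, which is the same order as $\rho x_u$ itself. Your Cauchy--Schwarz bound $\sqrt{D}\cdot o(1)=o(\sqrt h)$ is not $o(h^{1/4})$, so it is not absorbed into $(O(\eta^2)+o(1))\,d_B(u)\sqrt{a/2h}$. Hence the upper bound $x_u\le(1+O(\eta^2)+o(1))\,d_B(u)\sqrt{a/2h}/\rho$ does not follow, and the argument does not close. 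The global mass bound \eqref{eq:R-mass} is too weak here; you need pointwise control of the coordinates on $R$.

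That control comes from tools you already used in (i):
\begin{itemize}
\item By Lemma~\ref{lem:R-degree}, $\rho x_w\le C_R x^*$ for every $w\in R$.
\item By (i) and $a\le b$, $x^*=O(\sqrt{b/h})$.
\item Every edge from $u$ into $R$ is an edit-error edge, so $d_R(u)\le D=o(h)$.
\end{itemize}
Together these give $\sum_{w\in N_R(u)}x_w\le D\,C_R\,O(\sqrt{b/h})/\sqrt h=o(\sqrt b)=o(1)\cdot b\sqrt{a/2h}$. Alternatively, every edge meeting $R$ then satisfies \eqref{eq:light}, so Lemma~\ref{lem:light-edges} leaves at most $k-1$ of them. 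With this replacement your combination gives $d_B(u)\ge(1-O(\eta^2)-o(1))h/a=(1-O(\eta^2)-o(1))b$, as intended.

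One bookkeeping correction concerns where $b<100a$ is used. For $u\in A$, the conversion $a\sqrt{b/2h}\le b\sqrt{a/2h}$ needs only $a\le b$. It is the symmetric case $v\in B$ that genuinely needs $b<100a$. There it turns $d_B(v)x_B^*\le 2\eta^2 b\sqrt{a/2h}$ into $O(\eta^2)\,a\sqrt{b/2h}$. It also turns the $R$-term $o(\sqrt b)$ into $o(\sqrt a)=o(1)\cdot a\sqrt{b/2h}$.
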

\begin{lemma}\label{lem3.15}
If  $b\ge100a$ and $u^*$ has maximum Perron coordinate, then $d_B(u^*)\ge(1-3\eta^2)b$ for $u^*\in A,\, x_v\le\frac14x_{u^*}$ for $v\in B,$
and every $u\in A$ with $x_u>\frac{3}{5}x_{u^*}$ satisfies $d_B(u)>11b/20$.
\end{lemma}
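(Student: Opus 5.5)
The plan is to treat the three assertions in the order $x_B^*\le\frac14x_{u^*}$, then $u^*\in A$ together with $d_B(u^*)\ge(1-3\eta^2)b$, then the degree bound for heavy vertices of $A$. The last two both come from the two-step eigen-equation at a vertex of $A$. Throughout I use $ab=(1+o(1))h$, $\rho^2=(1+o(1))h$, $D=o(h)$, and $b\ge100a$, so that $a\le\sqrt{h/100}\,(1+o(1))$ and $\sqrt{a/b}\le\frac1{10}$. I would split according to whether $a\to\infty$ or $a$ stays bounded along the sequence, since the available tools differ.

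\emph{Step 1: small coordinates on $B$.}
\begin{itemize}
\item Lower bound on $x_{u^*}$. Lemma~\ref{lem:4.2} gives $x_{u^*}^2\ge x_u^2\ge(1-C\eta^2)\frac{b}{2h}$ for any $u\in A^*$.
\item Case $a\to\infty$. Lemma~\ref{lem:4.4} gives $x_B^*\le(1+O(\eta^2)+o(1))\sqrt{a/(2h)}+(O(\eta^2)+o(1))x_{u^*}$. Since $\sqrt{a/(2h)}\le(1+O(\eta^2))\sqrt{a/b}\,x_{u^*}\le\frac{1+O(\eta^2)}{10}x_{u^*}$, this yields $x_v\le\frac14x_{u^*}$ for all $v\in B$.
\item Case $a$ bounded. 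For $v\in B$ I write $\rho x_v\le d_A(v)x_{u^*}+\sum_{w\in N(v)\setminus A}x_w$. Every edge from $v$ to $B\cup R$ is an edit-error, so Cauchy--Schwarz bounds the second sum by $\sqrt{D}=o(\sqrt h)$. Hence $x_v\le\frac{a}{\rho}x_{u^*}+o(1)$.
\item Here $x_{u^*}\ge(1-o(1))\sqrt{b/(2h)}\approx(2a)^{-1/2}$ is bounded below by a constant, so the $o(1)$ term is absorbed. Also $a/\rho\le(1+o(1))\sqrt{a/b}\le\frac1{9}$, so again $x_v\le\frac14x_{u^*}$.
\item Consequence. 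Lemma~\ref{lem:R-degree} gives $x_w\le\frac{C_R}{\rho}x_{u^*}$ for $w\in R$. Together with the bound on $B$, this forces $u^*\in A$.
\end{itemize}

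\emph{Step 2: the degree of $u^*$.} I use the two-step identity
$$\rho^2x_{u}=\sum_{w\in N(u)}\sum_{y\in N(w)}x_y$$
at $u=u^*$ and split by where $w$ lies.
\begin{itemize}
\item $w\in N_B(u^*)$, $y\in A$: the contribution is at most $d_B(u^*)\,a\,x_{u^*}$.
\item $w\in B$ with $y\in B\cup R$, or $w\in R$: every such path uses an edit-error edge, so the total is at most $2Dx_{u^*}=o(h)x_{u^*}$.
\item $w\in N_A(u^*)$: each inner sum equals $\rho x_w\le\rho x_{u^*}$, so the total is at most $d_A(u^*)\rho x_{u^*}$.
\item Bounding $d_A(u^*)$. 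If $a\to\infty$, Lemma~\ref{lem:local-wheel} gives $d_A(u^*)\le2\eta^2a$, and then $d_A(u^*)\rho\le O(\eta^2)h$. If $a$ is bounded, $d_A(u^*)\rho\le a\rho=O(\sqrt h)=o(h)$.
\end{itemize}
Combining these, $\rho^2\le d_B(u^*)a+O(\eta^2)h+o(h)$. With $\rho^2\ge h$ and $ab=(1+o(1))h$, this gives $d_B(u^*)\ge(1-O(\eta^2))b$. Obtaining the exact constant $3\eta^2$ requires tracking the constants, either in the bound $d_A(u^*)\le2\eta^2a$ or via the minimality condition~\eqref{eq:minimality}.

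\emph{Step 3: heavy vertices of $A$.} For $u\in A$ with $x_u>\frac35x_{u^*}$, the same decomposition gives
$$\tfrac35\rho^2x_{u^*}<\rho^2x_u\le\bigl(d_B(u)a+O(\eta^2)h+o(h)\bigr)x_{u^*}.$$
Hence $d_B(u)>(\frac35-O(\eta^2)-o(1))\frac{h}{a}\ge\frac{11}{20}b$, because $\frac35=\frac{12}{20}$ leaves a margin of $\frac1{20}$ that absorbs the error terms.

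\emph{Main obstacle.} I expect the hardest step to be the $A$-neighbor term $d_A(u)\rho x_{u^*}$ and the uniformity of the $B$-bound across the two regimes. When $a\to\infty$, $x_{u^*}$ may tend to $0$, so additive $o(1)$ errors are unusable and one must rely on Lemma~\ref{lem:4.4} and Lemma~\ref{lem:local-wheel}. When $a$ is bounded, the symmetric half of Lemma~\ref{lem:local-wheel} is unavailable, and the argument must instead use the crude bound $d_A\le a=O(1)$ together with the constant lower bound on $x_{u^*}$.
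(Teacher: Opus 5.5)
Your proposal is correct and follows essentially the paper's route: the same split into $a\to\infty$ and $a$ bounded; Lemmas~\ref{lem:4.2} and~\ref{lem:4.4} to get $x_v\le\frac14x_{u^*}$ on $B$ when $a\to\infty$, and the estimate $\rho x_v\le a\,x_{u^*}+\sqrt D$ plus a constant lower bound on $x_{u^*}$ when $a$ is bounded (that lower bound is most safely read off from Lemma~\ref{lem:mass-stability}, which gives $\sum_{u\in A}x_u^2\ge\frac12-o(1)$); hence $u^*\in A$; and the two-step eigen-equation for the degree bounds. Your bound $d_A(u)\rho\,x_{u^*}$ for the $N_A(u)$-paths is a harmless variant of the paper's $d_A(u)\,b\,x_B^*$, and the constant $3\eta^2$ is no obstacle, since $d_A(u^*)\rho\le 2\eta^2a\rho\le(2\eta^2+o(1))h$ already yields $d_B(u^*)\ge(1-2\eta^2-o(1))b\ge(1-3\eta^2)b$.
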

\begin{proof}
{Note that $b\geq100a$. We may assume that either $a \to \infty$ or $a$ remains bounded as $h \to \infty$. For the latter, since $k$ is fixed, we may write $a = O_k(1)$.

First we treat the case $a \to \infty$. In this case, there exists a $u^* \in A$, i.e., $x^* = x_A^*$. Indeed, suppose $x^* = x_B^*$. Upon absorbing the $O(\eta^2)x^*$ term, the second inequality in (\ref{eq:29}) yields
\begin{equation*}
x^* \leq (1 + O(\eta^2)) \sqrt{\frac{a}{2h}}.
\end{equation*}
Note that the first assertion of Lemma \ref{lem:4.2} guarantees that $x_u \geq (1 - O(\eta^2))\sqrt{\frac{b}{2h}}$ for all $u \in A^*$. Since $b \geq 100a$, $x_u > x^*$ for all $u \in A^*$, a contradiction. Hence, $u^* \in A$, and the same lower bound for $u \in A^*$ together with the first inequality in (\ref{eq:29}) gives
\begin{equation}
\label{eq:33}
x_{u^*} = (1 + O(\eta^2) + o(1)) \sqrt{\frac{b}{2h}}.
\end{equation}

Now, we show $d_B(u^*) \geq (1 - 3\eta^2)b$. Suppose $d_B(u^*) < (1 - 3\eta^2)b$. Since $u^* \in A$, Lemma \ref{lem:local-wheel} gives $d_A(u^*) \leq 2\eta^2a$. Together with \(
\rho^2 x_w \leq \sum_{u \in N_A(w)} d_B(u) x_B^* + \sum_{u \in N_B(w)} d_A(u) x_A^* + 2D x^* \leq d_A(w) b x_B^* + d_B(w) a x_A^* + o(h) x^*\), one has
\begin{equation*}
\rho^2 x_{u^*} \leq (d_A(u^*)b + d_B(u^*)a + o(h)) x_{u^*} \leq ((1 - \eta^2)ab + o(h)) x_{u^*}.
\end{equation*}
Recall that $ab = (1 + o(1))h$ and $\eta^2$ is a constant. Thus we have $\rho^2 <h$, which contradicts the lower bound with \(\rho> \sqrt{h}\). Consequently, $d_B(u^*) \geq (1 - 3\eta^2)b$.

Next, since $\sqrt{b/a} \geq 10$, the second inequality in (\ref{eq:29}) together with (\ref{eq:33}) directly implies $x_B^* \leq \frac{1}{4}x_{u^*}$, i.e., $x_v \leq \frac{1}{4}x_{u^*}$ for any vertex $v \in B$. Now take $u \in A$ with $x_u > \frac{3}{5}x_{u^*}$, and set $\beta_u = d_B(u)/b$. Again by Lemma \ref{lem:local-wheel}, we have $d_A(u) \leq 2\eta^2a$. Together with \(
\rho^2 x_w \leq \sum_{u \in N_A(w)} d_B(u) x_B^* + \sum_{u \in N_B(w)} d_A(u) x_A^* + 2D x^* \leq d_A(w) b x_B^* + d_B(w) a x_A^* + o(h) x^*\) and $x_B^* \leq \frac{1}{4}x_{u^*}$, we obtain
\begin{equation*}
\rho^2 x_u \leq d_A(u)b x_B^* + (d_B(u)a + o(h)) x_{u^*} \leq \left(ab\left(\frac{1}{2}\eta^2 + \beta_u\right) + o(h)\right) x_{u^*}.
\end{equation*}
Recall that $ab = (1 + o(1))h$ and $\rho^2 > h$. Thus, $x_u \leq (\frac{1}{2}\eta^2 + \beta_u + o(1))x_{u^*}$. Combining this with $x_u > \frac{3}{5}x_{u^*}$, we obtain $\beta_u > 11/20$ for sufficiently small $\eta$ and large $h$.

Second we treat the case $a = O_k(1)$. Now, $b = \Theta(h)$ and $D = o(h) = o(ab)$. Since $\eta^2$ is a constant, and the total number of missing $A$--$B$ edges is at most $D$, we have
\begin{equation*}
d_B(u) \geq b - D = (1 - o(1))b \geq (1 - 3\eta^2)b > \frac{11b}{20}
\end{equation*}
for all vertices $u \in A$. 

We next verify that \( u^* \in A \) and \( x_v = o(1) \) for every \( v \in B \). Let \( K \) be the union of \( K_{A,B} \) and \( |R| \) isolated vertices. Denote \( M = A(H) - A(K) \), and let \( \rho(M) \) denote the maximum absolute value of the eigenvalues of \( M \). Each of the \( D \) edge discrepancies between \( G \) and \( K \) contributes two nonzero off-diagonal entries, each is equal to \(1\) or \(-1\), to \( M \). Hence, \( \operatorname{tr}(M^2) = 2D \), and thus \( \rho(M) \leq \sqrt{2D} \). Note that \( \rho(K) = \sqrt{ab} \). By Weyl's inequality, \( |\rho(H) - \sqrt{ab}| \leq \sqrt{2D} = o(\sqrt{ab}) \), which yields
\begin{equation}
\label{eq:34}
\rho(H) \geq (1 - o(1))\sqrt{ab}.
\end{equation}

Let \( p = \sum_{u \in A} x_u^2 \) and \( q = \sum_{v \in B} x_v^2 \). Then, \( p + q \leq1 \). Since \(\boldsymbol{x}^{\mathsf T} M\boldsymbol{x} \leq \rho(M) \leq \sqrt{2D} \), we have
\[
\rho(H) = \boldsymbol{x}^{\mathsf T} A(K)\boldsymbol{x} + \boldsymbol{x}^{\mathsf T} M\boldsymbol{x} \leq 2\left(\sum_{u \in A} x_u\right)\left(\sum_{v \in B} x_v\right) + \sqrt{2D}.
\]
Applying the Cauchy--Schwarz inequality on \( A \) and \( B \) yields
\[
\rho(H) \leq 2\sqrt{ap}\sqrt{bq} + \sqrt{2D} = (2\sqrt{pq} + o(1))\sqrt{ab}.
\]
Together with (\ref{eq:34}), this gives \( 1 - o(1) \leq 2\sqrt{pq} \). Combining this with \( 2\sqrt{pq} \leq p + q \leq 1 \) implies \( p, q = \frac{1}{2} + o(1) \). Together with \( a = O_k(1) \), this implies
\[
x_A^* \geq \sqrt{\frac{1}{a}\sum_{u \in A} x_u^2} = \sqrt{\frac{p}{a}} \geq J_k
\]
for some positive constant \( J_k \) that depends only on \( k \).

Let \( v \in B \). Every neighbor of \( v \) outside \( A \) is incident to an edge in the edit-error set between \( H \) and \( K \). Hence, \( |N_H(v) \setminus A| \leq D \). By the eigen-equation and the Cauchy--Schwarz inequality,
\[
\rho x_v \leq \sum_{u \in A} x_u + \sum_{u \in N_H(v) \setminus A} x_u \leq \sqrt{ap} + \sqrt{D}\sqrt{\sum_{u \in N_H(v) \setminus A} x_u^2} \leq \sqrt{ap} + \sqrt{D}.
\]
Recall that \( a = O_k(1) \), \( \sqrt{D} = o(\sqrt{ab}) \) and \( \rho > \sqrt{h} = \Theta(\sqrt{ab}) \). Thus \( x_v = o(1) \). Together with \( x_{u^*} \geq x_A^* \geq J_k \), this implies \( u^* \in A \) and \( x_v = o(x_{u^*}) \) for all \( v \in B \). The proof is therefore complete.}
\end{proof}
\begin{lemma}
\label{lem:absorb-R}
\begin{enumerate}[label=\rm(\roman*)]
\item If  $b<100a$, i.e., \(a\to\infty\), then the number of edges with at least one endpoint in \(R\) is at most \(k-1\). Moreover, the vertices incident with these edges contribute \( o(1) \) to the Rayleigh quotient.
\item If  there exists an \(A_0\) such that $a<A_0$, then $|R|=o(b)$; after moving $R$ into $B$, every moved vertex has coordinate $o(x_{u^*})$, and all conclusions of Lemma~\ref{lem3.15} remain valid.
\end{enumerate}
\end{lemma}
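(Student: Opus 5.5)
For (i) the plan is to combine the light-edge count of Lemma~\ref{lem:light-edges} with an upper bound on the Perron products of edges touching $R$. Since $b<100a$ and $ab=(1+o(1))h$, we have $a,b=\Theta(\sqrt h)$. Lemma~\ref{lem:localization}(i) then gives $x_A^*,x_B^*=O(h^{-1/4})$. Next I would bound $x_w$ for $w\in R$. By Lemma~\ref{lem:R-degree}, $d_H(w)\le C_R$, so the eigen-equation gives
\[
\rho x_w\le C_R\max_v x_v .
\]
Hence $x_w=O(h^{-1/2}\cdot h^{-1/4})=O(h^{-3/4})$, since the maximum coordinate is attained in $A\cup B$ and is $O(h^{-1/4})$. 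For an edge $wv$ with $w\in R$, the product is then $x_wx_v=O(h^{-3/4})\cdot O(h^{-1/4})=O(h^{-1})$. This is far below the threshold $(1-4\varepsilon)/(4\sqrt h)$ in \eqref{eq:light}, so every such edge is light. Lemma~\ref{lem:light-edges} allows fewer than $k$ light edges, which gives the bound $k-1$. For the \emph{moreover} part, at most $2(k-1)$ vertices are incident with these edges. Each has squared coordinate $O(h^{-1/2})$ (endpoints in $A\cup B$) or $O(h^{-3/2})$ (endpoints in $R$), so their total mass, and their contribution $2\sum x_wx_v=O(h^{-1})$ to $\boldsymbol{x}^{\mathsf T}A\boldsymbol{x}$, is $o(1)$.

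For (ii), with $a<A_0$ bounded, the plan is as follows. First, every vertex of $R$ has degree at most $C_R$ (Lemma~\ref{lem:R-degree}) and is incident with at least one edge, since $H$ has no isolated vertices. All such edges are edit-errors, so $|R|\le D=o(h)$. That is not yet $o(b)$, but here $b=\Theta(h)$ because $ab=(1+o(1))h$ and $a$ is bounded, so $|R|\le D=o(h)=o(b)$. Moving $R$ into $B$ changes $b$ by a factor $1+o(1)$. The new distance is $D'\le D+a|R|=o(h)$, because at most $a|R|$ new missing cross edges are created and edges inside $R$ or between $R$ and $B$ become edits inside $B$. So \eqref{eq:ab} persists with the new parts.

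For the moved vertices, apply the eigen-equation with $d_H(w)\le C_R$: this gives $x_w\le C_R x_{u^*}/\rho=o(x_{u^*})$. Finally I would rerun the proof of Lemma~\ref{lem3.15} in the bounded-$a$ case with the new $B$. That proof used only $a=O_k(1)$, $D=o(h)$, $b=\Theta(h)$ and the Cauchy--Schwarz/Weyl estimates, all of which survive with $D'$ in place of $D$. The conclusions are therefore unchanged: $d_B(u)\ge(1-o(1))b$ for $u\in A$ (the new missing edges number only $a|R|=o(b)$ per vertex), $u^*\in A$, and $x_v=o(x_{u^*})$ on $B$.

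The main obstacle is the step in (i) where all edges at $R$ are shown to be light. It needs the maximum coordinate to be $O(h^{-1/4})$, which requires Lemma~\ref{lem:localization}(i) together with the absorption of the $O(\eta^2)x^*$ terms. A second delicate point is the lower bound on $\rho$ (at least $\sqrt h$) needed to invoke Lemma~\ref{lem:light-edges}; this comes from Lemma~\ref{lem:core-extraction} together with Lemma~\ref{lem2.4}.
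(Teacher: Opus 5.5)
Your proposal is correct and follows the paper's proof essentially step for step. In (i) you bound $x_w\le C_Rx_{u^*}/\rho$ via Lemma~\ref{lem:R-degree} and use Lemma~\ref{lem:localization}(i) to get $x_{u^*}^2=O(h^{-1/2})$, so every edge at $R$ is light, and then you invoke Lemma~\ref{lem:light-edges}. In (ii) you use $b=\Theta(h)$, $|R|=o(b)$ and $x_w=o(x_{u^*})$, and rerun the bounded-$a$ part of Lemma~\ref{lem3.15}; your explicit bound $D'\le D+a|R|$ makes precise the paper's claim that the estimates are ``unchanged up to $o(1)$''.

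One small slip: an edge with both ends in $R$ covers two vertices of $R$, so the correct count is $|R|\le 2D$ rather than $|R|\le D$. This is harmless, since both bounds are $o(h)$.
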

\begin{proof}
By Lemma~\ref{lem:R-degree}, $d(w)\le C_R$ for $w\in R$ and
\(
 x_w\le\frac{C_Rx_{u^*}}{\rho}.
\)
When $b<100a$, Lemma~\ref{lem:localization} gives
$x^2_{u^*}=O(1/a)$ (when \(a=O_k(1)\)) and $x^2_{u^*}=O(h^{-1/2})$ (when \(a\to \infty\)).  Assume $a$ is sufficiently large.  Then every edge
incident with $R$ satisfies~\eqref{eq:light}, and
Lemma~\ref{lem:light-edges} shows that there are at most $k-1$ such
edges.  Since there are no isolated vertices, $|R|=O_k(1)$.

If there exists an \(A_0\) such that $a<A_0$, then $b=\Theta(h)$.  Every vertex of $R$ is incident with an edit-error edge, so $|R|\le2D=o(h)=o(b)$.  Moving $R$ into $B$ changes $b$ by $o(b)$, and $x_w\le C_Rx_{u^*}/\rho=o(x_{u^*})$. Thus the missing-cross-edge and localization estimates are unchanged up to $o(1)$. Repeating the argument of Lemma \ref{lem:localization}, we obtain the same assertion.
\end{proof}
\subsection{The bounded-core reduction}

We first exclude the balanced scale.
\begin{lemma}
\label{lem:no-balanced}
For all sufficiently large $h$, the stability partition $A\cup B$ with $|A|=a,|B|=b$ satisfies
\(
 b\ge100a.
\)
\end{lemma}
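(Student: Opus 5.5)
We argue by contradiction: assume $b<100a$, so that $a\to\infty$ (since $a\le b$ and $ab=(1+o(1))h$, both parts have order $\sqrt h$). In this regime Lemma~\ref{lem:localization} gives that every $u\in A$ has $d_B(u)>(1-\eta)b$ and every $v\in B$ has $d_A(v)>(1-\eta)a$. It also gives that all Perron coordinates on $A$ and $B$ are of the flat sizes $\sqrt{b/2h}$ and $\sqrt{a/2h}$ up to factors $1+O(\eta^2)+o(1)$. Moreover, Lemma~\ref{lem:absorb-R}(i) says that $R$ carries at most $k-1$ edges, which contribute $o(1)$ to the Rayleigh quotient. The plan is to show that such a nearly balanced, nearly complete bipartite structure cannot reach the threshold $\rho\ge s_k(h)=\sqrt h+\frac{k-1}{2}+O(h^{-1/2})$ from Lemma~\ref{lem:candidate-expansion}.

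\emph{Step 1: the graph inside each part.} For $u\in A$, the $W_{2k+1}$-freeness applied as in Lemma~\ref{lem:local-wheel} gives more than this. Since $d_B(u)>(1-\eta)b$, any $k$ neighbours of $u$ in $A$ have more than $(1-(k+1)\eta)b$ common neighbours in $N_B(u)$. Hence they can be threaded into a $2k$-cycle inside $N(u)$. Therefore $d_A(u)\le k-1$ for every $u\in A$, and symmetrically $d_B(v)\le k-1$ for every $v\in B$. So $e(A)+e(B)\le (k-1)(a+b)/2$.

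\emph{Step 2: estimate $\rho$ through the residual identity.} Apply Lemma~\ref{lem:residual} with $u^*$ the vertex of maximum coordinate, say $u^*\in A$ (the case $u^*\in B$ is symmetric). By Step~1, $E(U)$ consists of the at most $k-1$ edges of $A$ at $u^*$ together with edges inside $N_B(u^*)$. By the flatness of the coordinates, each term $x_u+x_v-x_{u^*}$ has size $(1+O(\eta))x_{u^*}$ for edges inside $B$ (when $a\approx b$), and is comparable to $x_{u^*}$ in general. Using $e(B)\le (k-1)b/2$, this yields
\[
\sum_{uv\in E(U)}(x_u+x_v-x_{u^*})\le \bigl((k-1)(a+b)/2+O(1)\bigr)(1+O(\eta))\,x_{u^*}.
\]
Meanwhile $\rho^2-(k-1)\rho\ge h-c_k-O(1)$ requires a surplus of order $(k-1)\sqrt h\,x_{u^*}$. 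Since $a+b\ge 2\sqrt{ab}=2(1+o(1))\sqrt h$, this bound alone does not rule out the balanced case. We therefore work more carefully with the eigen-equations. Write $\rho^2\le \lambda_{\max}$ of the quotient given by $e(A,B)\le ab$, with the internal degrees at most $k-1$. A cleaner route is the bound
\[
\rho\le \sqrt{ab}+\tfrac{1}{2}\bigl(\max_A d_A+\max_B d_B\bigr)/\ldots,
\]
more precisely, $\rho(H)\le \rho(K_{A,B}\text{-part})+\rho(H[A]\cup H[B])$. Here $\rho(H[A]\cup H[B])\le k-1$, but the important point is that this interior contribution is only effective on a unit vector concentrated in one part. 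For $x$ split with mass $\tfrac12$ on each side, $x^{\mathsf T}A(H[A]\cup H[B])x\le (k-1)\cdot\tfrac12+(k-1)\cdot\tfrac12$, which is too weak. So instead I would compute $x^{\mathsf T}A(H)x$ with the flat coordinates: the cross part gives at most $\sqrt{ab}$ minus a missing-edge deficit, and the interior part gives $2e(A)\cdot\frac{b}{2h}+2e(B)\frac{a}{2h}\le (k-1)\bigl(\frac{ab}{2h}+\frac{ab}{2h}\bigr)(1+O(\eta))=(k-1)(1+O(\eta))$. Simultaneously, $e(A,B)\le h-e(A)-e(B)$ forces $\sqrt{ab}\le\sqrt{h-e(A)-e(B)}\approx \sqrt h-\frac{e(A)+e(B)}{2\sqrt h}$. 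With $e(A)+e(B)$ up to $(k-1)(a+b)/2$ and $a\approx b\approx\sqrt h$, this loss is about $\frac{k-1}{2}\cdot\frac{a+b}{\sqrt h}\ge k-1$ in the unbalanced-ratio-independent form. The gain is $(k-1)(1+O(\eta))$ and the loss is at least $(k-1)\frac{a+b}{2\sqrt{ab}}\ge k-1$, so the net gain is at most $O(\eta)(k-1)$, which is far below the required $\frac{k-1}{2}$. This contradiction is the heart of the proof. The factor: $\rho\ge\sqrt h+\frac{k-1}2$ while $\rho\le \sqrt h+O(\eta k)+o(1)$.

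\emph{Main obstacle.} The delicate step is making the Rayleigh-quotient upper bound rigorous, since $x$ is not exactly flat. I would bound $x^{\mathsf T}A(H)x\le 2\sum_{u\in A,v\in B}x_ux_v+2\sum_{E(A)\cup E(B)}x_ux_v$ plus the $o(1)$ contribution of $R$. For the second sum I would use the pointwise maxima from Lemma~\ref{lem:localization}(i); for the first I would use Cauchy--Schwarz, $2\|x_A\|_1\|x_B\|_1\le2\sqrt{ab\,pq}\le\sqrt{ab}$, which gives $\rho\le\sqrt{ab}+(k-1)(1+O(\eta^2)+o(1))$. The choice $\eta^4\gg\varepsilon$ and $\eta\ll k^{-1}$ then closes the argument.
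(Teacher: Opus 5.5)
Your Step~1 is correct; it is the paper's claim $\Delta(H[A]),\Delta(H[B])\le k-1$. Step~2, however, does not yield a contradiction, for two reasons.

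\emph{The cross term.} The bound you settle on, $2\|x_A\|_1\|x_B\|_1\le\sqrt{ab}$, is useless at the $O(1)$ scale. Only $ab=(1+o(1))h$ is known, and $ab-e(A,B)$ (the number of missing cross edges) is merely $o(h)$. So $\sqrt{ab}$ may exceed $\sqrt h$ by an unbounded amount. Your inequality $\sqrt{ab}\le\sqrt{h-e(A)-e(B)}$ does not follow from $e(A,B)\le h-e(A)-e(B)$, since $ab\ge e(A,B)$. The right device, used in the paper, is Cauchy--Schwarz over the edges:
\[
\Bigl(\sum_{uv\in E(A,B)}x_ux_v\Bigr)^2\le e(A,B)\sum_{u\in A}x_u^2\sum_{v\in B}x_v^2 .
\]
This bounds the cross part by $\sqrt{h-s}$ with $s=e(A)+e(B)$.

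\emph{The gain/loss comparison.} Even with that fix, your comparison is wrong. The loss $s/(2\sqrt h)\le (k-1)(a+b)/(4\sqrt h)$ is an upper bound, not a lower bound, and you also introduced a spurious factor $2$. Moreover, the loss depends on the same $s$ as the gain $e(A)/a+e(B)/b$. Writing $\alpha=2e(A)/a$, $\beta=2e(B)/b$, $t=\sqrt{b/a}$, the correct net estimate is
\[
\rho\le\sqrt h+\alpha\Bigl(\frac12-\frac1{4t}\Bigr)+\beta\Bigl(\frac12-\frac t4\Bigr)+O_k(\eta^2)+o(1).
\]
Its maximum over $\alpha,\beta\le k-1$, $1\le t<10$ is $\frac{k-1}{2}$, attained at $t=1$, $\alpha=\beta=k-1$. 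This is exactly the constant term of $s_k(h)$, so the net gain is not $O(\eta)$.

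No refinement of the counting can help. Take $K_{a,a}$ with a $(k-1)$-regular graph inside each side. It has every property your Step~2 uses, yet it is $(a+k-1)$-regular with $h=a^2+(k-1)a$. Hence $\rho=a+k-1=\sqrt h+\frac{k-1}{2}+\frac{(k-1)^2}{8a}+O(a^{-2})>s_k(h)$.

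\emph{The missing step.} You need a second, genuinely combinatorial use of $W_{2k+1}$-freeness to exclude this near-extremal configuration. The paper notes that the displayed bound leaves a fixed gap below $\sqrt h+\frac{k-1}{2}$ unless $\alpha,\beta>k-1-\frac14$. In that remaining case:
\begin{itemize}
\item Some $u_0\in A$ has exactly $k-1$ neighbours $u_1,\dots,u_{k-1}$ in $A$.
\item The common $B$-neighbourhood $X=\bigcap_{i=0}^{k-1}N_B(u_i)$ satisfies $|X|>(1-k\eta)b$.
\item Since $\beta$ is close to $k-1$ and $\Delta(H[B])\le k-1$, this gives $e(X)>2(k-1)$. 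Hence $H[X]$ contains two disjoint edges $p_1q_1,p_2q_2$.
\item Then $u_1p_1q_1u_2z_2u_3z_3\cdots u_{k-2}z_{k-2}u_{k-1}p_2q_2u_1$, with $z_i\in X$, is a $C_{2k}$ inside $N(u_0)$. This is where $k\ge3$ is used, and it yields a $W_{2k+1}$.
\end{itemize}
Without this step or an equivalent one, your argument does not rule out the balanced case.
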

\begin{proof}
Suppose $a\le b<100a$.  Then $a,b=\Theta(\sqrt h)$. By
Lemma~\ref{lem:localization}(ii),  one has
\(
 d_B(u)>(1-\eta)b\) for \(u\in A,\) and \(
 d_A(v)>(1-\eta)a\) for \(v\in B\).
Furthermore, we claim
\[
 \Delta(H[A]),\Delta(H[B])\le k-1.
\]
In fact, suppose first that $\Delta(H[A]) \geq k$. Then there exists a vertex $u\in A$ having $k$ distinct neighbors $u_1,u_2,\dots,u_k\in A$. Read the indices cyclically, so that $u_{k+1}=u_1$. For every $i\in\{1,\dots,k\}$, it follows from Lemma~\ref{lem:localization}(ii) that
\[
\big|N_B(u)\cap N_B(u_i)\cap N_B(u_{i+1})\big|
\geq d_B(u)+d_B(u_i)+d_B(u_{i+1})-2b > (1-3\eta)b.
\]
Since $k$ is fixed and $b\to\infty$, each of these $k$ triple intersections has cardinality tending to infinity. Thus, for all sufficiently large $h$, we may choose distinct vertices
\[
z_i\in N_B(u)\cap N_B(u_i)\cap N_B(u_{i+1})
\]
for each $i\in\{1,\dots,k\}$. Then, $u_1z_1u_2z_2\cdots u_kz_ku_1$ forms a cycle $C_{2k}$ in $H[N_H(u)]$. Hence, $G$ contains a copy of $W_{2k+1}$, contradicting the assumption that $G$ is $W_{2k+1}$-free. Therefore, $\Delta(H[A])\leq k-1$.
By symmetry, we can prove $\Delta(H[B])\leq k-1$.

Let
\[
 \alpha=\frac{2e(A)}a,\qquad
 \beta=\frac{2e(B)}b,\qquad t=\sqrt{b/a}.
\]
Thus $\alpha,\beta\le k-1$. Let $s = e(A)+e(B)$. Edges incident with the original $R$
contribute $o(1)$ by Lemma~\ref{lem:absorb-R}. So we have
\[
\rho=\sum_{uv\in E(A,B)}2x_ux_v+\sum_{uv\in E(A)}2x_ux_v+\sum_{uv\in E(B)}2x_ux_v+o(1).
\]
For the cross contribution, $e(A,B)\leq h-s$. By the Cauchy--Schwarz inequality,
\[
\biggl(\sum_{uv\in E(A,B)}x_ux_v\biggr)^2
\le (h-s)\sum_{uv\in E(A,B)}x_u^2x_v^2
\le (h-s)\biggl(\sum_{u\in A}x_u^2\biggr)\biggl(\sum_{v\in B}x_v^2\biggr).
\]
Since $\sum_{u\in A}x_u^2+\sum_{v\in B}x_v^2\leq1$, we have $\sum_{u\in A}x_u^2\sum_{v\in B}x_v^2\le \frac14$, and therefore,
\(
\sum_{uv\in E(A,B)}2x_ux_v\le \sqrt{h-s}.
\)

Note that where $ab=(1+o(1))h$ and $e(A)/a=\alpha/2\le (k-1)/2$. Then in view of Lemma~\ref{lem:localization}(i), we obtain
\[
\sum_{uv\in E(A)}2x_ux_v
\le 2e(A)(x_A^*)^2
\le e(A)\frac{b}{h}\bigl(1+O(\eta^2)+o(1)\bigr)
=\frac{e(A)}{a}+O_k(\eta^2)+o(1),
\]
By symmetry, we also get
\[
\sum_{uv\in E(B)}2x_ux_v\le \frac{e(B)}{b}+O_k(\eta^2)+o(1).
\]
Together with \(\sum_{uv\in E(A,B)}2x_ux_v\le \sqrt{h-s}\) and the above two inequalities, we obtain
\[
\rho=\sum_{uv\in E(H)}2x_ux_v
\le \sqrt{h-s}+\frac{e(A)}{a}+\frac{e(B)}{b}+O_k(\eta^2)+o(1).
\]

Recall that $a\le b<100a$. Thus $1\le t=\sqrt{b/a}<10$. Furthermore, $ab=(1+o(1))h$ implies $a=(1+o(1))\sqrt{h}/r$ and $b=(1+o(1))r\sqrt{h}$. Since $e(A)=\alpha a/2$, $e(B)=\beta b/2$ and $\alpha,\beta\le k-1$, it follows that $s=(\alpha a+\beta b)/2=O_k(\sqrt{h})$. Therefore,
\[
\sqrt{h-s}=\sqrt{h}-\frac{s}{2\sqrt{h}}+o(1)
=\sqrt{h}-\frac{\alpha}{4t}-\frac{\beta t}{4}+o(1).
\]
Moreover, $e(A)/a=\alpha/2$ and $e(B)/b=\beta/2$. So we have
\[
\rho
\le \sqrt{h}-\frac{\alpha}{4r}-\frac{\beta r}{4}+\frac{\alpha}{2}+\frac{\beta}{2}+O_k(\eta^2)+o(1)
=\sqrt h+
 \alpha\left(\frac12-\frac1{4t}\right)+
 \beta\left(\frac12-\frac t4\right)+
 O_k(\eta^2)+o(1).\]

If $\min\{\alpha,\beta\}\le k-1-1/4$, elementary maximization over
$1\le t<10$ gives
\[
 \rho\le\sqrt h+\frac{k-1}{2}-\gamma(k,t)+O_k(\eta^2)+o(1)
\]
for some $\gamma(k,t)>0$, after choosing $\eta$ sufficiently small.  This
contradicts
\[
 \rho\ge s_k(h)=\sqrt h+\frac{k-1}{2}+O_k(h^{-1/2}).
\]
Hence $\alpha,\beta>k-1-1/4$.  Since the internal maximum degrees are
at most $k-1$, there $u_0\in A$ so that $d_A(u_0)=k-1$; write its internal
neighbors as $u_1,\ldots,u_{k-1}$.  By localization, their common
neighborhood
\[
 X=\bigcap_{i=0}^{k-1}N_B(u_i)
\]
By Lemma \ref{lem:localization}(ii), \( |B \setminus N_B(u_i)| < \eta b \) for \( i \in \{0, \ldots, k-1\} \). Since any \( v \in B \setminus X \) lies in \( B \setminus N_B(u_i) \) for some index \( i \), we have \( B \setminus X \subseteq \bigcup_{i=0}^{k-1} (B \setminus N_B(u_i)) \). Thus
\[
|B \setminus X| \leq \sum_{i=0}^{k-1} |B \setminus N_B(u_i)| < k\eta b,
\]
and so
\[
|X| = b - |B \setminus X| > (1-k\eta)b.
\] Let \( E_{\text{out}} \) be the set of edges of \( H[B] \) having at least one endpoint within \( B \setminus X \). Recall \( \Delta(H[B]) \leq k-1 \). Therefore,
\[
|E_{\text{out}}| \leq \sum_{v \in B \setminus X} d_B(v) \leq (k-1)|B \setminus X| < k(k-1)\eta b.
\]
Recall \( \beta > k-1 - \frac{1}{4} \). Thus \( e(B) = \frac{\beta b}{2} > \left(\frac{k-1}{2} - \frac{1}{8}\right)b \). Recall that \( \eta \ll 1 \) and \( b \) is sufficiently large. It follows that
\[
e(X) = e(B) - |E_{\text{out}}| > \left(\frac{k-1}{2} - \frac{1}{8} - k(k-1)\eta\right)b > 2(k-1).
\]

We next show that \( H[X] \) contains at least two disjoint edges. Indeed, suppose otherwise, and let \( pq \in E(X) \). Since there are no two disjoint edges in \( H[X] \), every edge of \( H[X] \) must be incident with at least one of \( p \) and \( q \). Therefore,
\[
e(X) \leq d_X(p) + d_X(q) \leq 2(k-1),
\]
a contradiction.

In summary $H[X]$ has more than $2(k-1)$ edges
and hence contains two disjoint edges, say \(p_1q_1\) and \(p_2q_2\).

When $k=3$, $u_1 p_1 q_1 u_2 p_2 q_2 u_1$ forms a $C_6$. Now suppose $k\ge 4$. Since $|X| > (1-k\eta)b$, we may select distinct vertices $z_2,\dots,z_{k-2}\in X\setminus\{p_1,q_1,p_2,q_2\}$. The walk
\[
u_1 p_1 q_1 u_2 z_2 u_3 z_3 \cdots u_{k-2} z_{k-2} u_{k-1} p_2 q_2 u_1
\]
is a cycle $C_{2k}$. Indeed, the two pairs $p_1 q_1$ and $p_2 q_2$ are edges of $H[X]$, while every remaining edge in the exhibited cycle connects some $u_i$ to a vertex in $X$. Such edges exist by the definition of $X$. Recall that $u_1,\dots,u_{k-1}\in N_A(u_0)$ and $X\subseteq N_B(u_0)$. We thus obtain a copy of $C_{2k}$ with all vertices contained in $N(u_0)$. Consequently, $H$ contains a copy of $W_{2k+1}$, contradicting the assumption that $G$ is $W_{2k+1}$-free. Therefore, the case $a\le b<100a$ cannot occur.
\end{proof}

We now work in the split case $b\ge100a$.  Take a maximum Perron vertex
$u^*$ and put
\[
 L=\left\{u\in N_A(u^*):x_u>\frac{3x_{u^*}}{5}\right\},\qquad
 L^*=N(u^*)\setminus L,\qquad
 W=V(H)\setminus N[u^*].
\]
By Lemmas~\ref{lem:localization} and~\ref{lem:absorb-R},
\begin{equation}
 u^*\in A,\quad d_B(u^*)\ge(1-o(1))b,\quad
 x_v\le \frac{x_{u^*}}{4}\quad(v\in B),
\label{eq:split-localization}
\end{equation}
and every $u\in L$ has $d_B(u)>11b/20$.

\begin{lemma}
\label{lem:L-size}
$|L|\le k-1$.
\end{lemma}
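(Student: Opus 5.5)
We argue by contradiction, using the wheel-embedding argument of Lemma~\ref{lem:local-wheel} and of Lemma~\ref{lem:no-balanced}. Suppose $|L|\ge k$, and pick distinct $v_1,\dots,v_k\in L$, with indices read cyclically. By the definition of $L$ we have $L\subseteq N_A(u^*)$. By \eqref{eq:split-localization} and Lemma~\ref{lem3.15}, $d_B(u^*)\ge(1-3\eta^2)b$ (indeed $(1-o(1))b$), while every $v_i$ satisfies $d_B(v_i)>11b/20$. We will build a $2k$-cycle $v_1z_1v_2z_2\cdots v_kz_kv_1$ inside $H[N_H(u^*)]$, with $z_i\in B$. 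This gives a copy of $W_{2k+1}$ centered at $u^*$ and contradicts $W_{2k+1}$-freeness (recall $H\subseteq G$).

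The key step is to choose, for each $i$, a vertex $z_i\in N_B(u^*)\cap N_B(v_i)\cap N_B(v_{i+1})$, with the $z_i$ pairwise distinct. Inclusion--exclusion gives
\[
\bigl|N_B(u^*)\cap N_B(v_i)\cap N_B(v_{i+1})\bigr|\ge d_B(u^*)+d_B(v_i)+d_B(v_{i+1})-2b> \Bigl(1-3\eta^2+\tfrac{11}{10}-2\Bigr)b=\Bigl(\tfrac1{10}-3\eta^2\Bigr)b .
\]
This is where the threshold $11b/20$ is used: two such neighborhoods overlap in more than $b/10$ vertices. Since $b\to\infty$ in the split case (because $ab=(1+o(1))h$ and $a\le b$), each of these $k$ intersections has size $\Theta(b)>k$. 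So we can greedily select distinct $z_1,\dots,z_k$. All $v_i$ and $z_i$ lie in $N(u^*)$, and consecutive vertices of the cycle are adjacent by construction, so the cycle has length $2k$ as required.

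I expect the main obstacle to be making sure the bound $d_B(u^*)\ge(1-3\eta^2)b$ and the bound $d_B(u)>11b/20$ for $u\in L$ are valid in both regimes: $a\to\infty$ and $a=O_k(1)$. In the bounded case, $R$ has been moved into $B$, and Lemma~\ref{lem:absorb-R}(ii) guarantees that the conclusions of Lemma~\ref{lem3.15} persist after this move, with $b$ changed only by $o(b)$. So I would cite Lemma~\ref{lem3.15} together with Lemma~\ref{lem:absorb-R} for both regimes. Note that $v_i\in A$ also requires $a\ge k$, but this is automatic once $|L|\ge k$, since $L\subseteq A$. With these inputs the counting argument above is uniform, and the contradiction shows $|L|\le k-1$.
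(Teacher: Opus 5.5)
Your proposal is correct and follows the paper's argument: assume $k$ distinct vertices $v_1,\dots,v_k$ of $L$, combine $d_B(u^*)\ge(1-3\eta^2)b$ with $d_B(v_i)>11b/20$ to get triple intersections of size more than $(\frac1{10}-3\eta^2)b>k$, choose distinct $z_i$ greedily, and obtain a $C_{2k}$ in $H[N_H(u^*)]$, hence a $W_{2k+1}$ centered at $u^*$. Your source for the degree bounds, Lemma~\ref{lem3.15} together with Lemma~\ref{lem:absorb-R} for bounded $a$, is the right one and slightly more accurate than the paper's citation of Lemma~\ref{lem:localization}, which treats the regime $b<100a$.
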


\begin{proof}
If $v_1,\ldots,v_k$ are distinct vertices of $L$,  Lemma \ref{lem:localization} gives \(d_B(v_i)>\frac{11}{20}b\), while \(d_B(u^*)\geq (1-3\eta^2)b\). So that, for every $i$
the triple intersection
\[
 N_B(u^*)\cap N_B(v_i)\cap N_B(v_{i+1})\geq d_B(u^*)+d_B(v_i)+d_B(v_{i+1})-2b>(\frac{1}{10}-3\eta^2)b.
\]
For sufficiently small \(\eta\) and sufficiently large \(h\), the right-hand side is larger than \(k\). We may therefore choose pairwise distinct vertices \(w_i \in N_B(u^*) \cap N_B(v_i) \cap N_B(v_{i+1})\) for each \(i \in \{1, \ldots, k\}\). Thus, \(v_1 w_1 v_2 w_2 \cdots v_k w_k v_1\) is a copy of \(C_{2k}\) in \(H[N_H(u^*)]\). Together with the center \(u^*\), this gives a copy of \(W_{2k+1}\), a contradiction. Hence, \(|L| \leq k-1\).
\end{proof}

Put
\[
 \cP:=\rho^2-(k-1)\rho-h+c_k.
\]
Theorem~\ref{thm:FZZ} gives $\cP\le0$, while
Lemmas~\ref{lem:candidate-expansion} and the monotonicity of
$x^2-(k-1)x$ give
\begin{equation}
 -c_k\le\cP\le0.
\label{eq:P-bounded}
\end{equation}
Define
\[
 S_{L^*}:=
 \sum_{yz\in E(L^*)}(x_y+x_z-x_{u^*}).
\]

\begin{lemma}
\label{lem:S-defect}
There is a constant $C>0$ such that
\begin{equation}
 S_{L^*}\le
 -Cb|L^*\cap A|x_{u^*}-\frac12e(L^*\cap B)x_{u^*}\le0
\label{eq:S-defect}
\end{equation}
for all sufficiently large $h$.
\end{lemma}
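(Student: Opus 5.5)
Split the edges of $H[L^*]$ by where their endpoints lie and bound each class using the Perron estimates from the split case. Recall $L^*=N(u^*)\setminus L$, so $L^*\cap A$ consists of $A$-neighbours of $u^*$ with $x_u\le \frac35x_{u^*}$. Also $L^*\cap B=N_B(u^*)$, up to the $o(b)$ vertices of $R$ moved into $B$, all of which have $x_v\le\frac14 x_{u^*}$ by \eqref{eq:split-localization}. Vertices of the original $R$ in $L^*$ have been absorbed into $B$ (or, in the bounded-$a$ case, lie in $B$), so every edge of $H[L^*]$ is of type $BB$, $AB$ or $AA$.

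\emph{$BB$ edges.} For $y,z\in L^*\cap B$ we have $x_y+x_z-x_{u^*}\le(\frac14+\frac14-1)x_{u^*}=-\frac12x_{u^*}$. Summing gives the term $-\frac12e(L^*\cap B)x_{u^*}$ exactly.

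\emph{$AA$ and $AB$ edges.} Write $L_A=L^*\cap A$. For an edge with one endpoint in $L_A$ and the other in $B$, the contribution is at most $(\frac35+\frac14-1)x_{u^*}=-\frac{3}{20}x_{u^*}$. For an edge inside $L_A$ it is at most $(\frac35+\frac35-1)x_{u^*}=\frac15x_{u^*}$, which has the wrong sign. I will charge these positive terms against the $AB$ edges. By Lemma~\ref{lem:local-wheel} (or directly by $\Delta$-type wheel arguments when $a$ is bounded, since $a=O_k(1)$), each $u\in L_A$ has $d_A(u)\le 2\eta^2 a$. Hence $e(L_A)\le \eta^2 a|L_A|$, and the positive contribution is at most $\frac15\eta^2 a|L_A|x_{u^*}\le \frac{\eta^2}{500}b|L_A|x_{u^*}$, using $b\ge100a$. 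In the bounded-$a$ case it is at most $O_k(1)|L_A|x_{u^*}=o(b)|L_A|x_{u^*}$.

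The main obstacle is the lower bound on the negative part: each $u\in L_A$ must have $\Omega(b)$ neighbours in $N_B(u^*)$. Since $u\in A$, \eqref{eq:minimality} gives $d_B(u)\ge b/2$ (after absorbing $R$, still $\ge(\frac12-o(1))b$). By Lemma~\ref{lem3.15}, $d_B(u^*)\ge(1-3\eta^2)b$, so
\[
|N_B(u)\cap N_B(u^*)|\ge (\tfrac12-3\eta^2-o(1))b\ge b/3 .
\]
Every such common neighbour lies in $L^*\cap B$, so $u$ is incident to at least $b/3$ edges of type $AB$ inside $L^*$. These edges are distinct for distinct $u$, because each has exactly one endpoint in $A$. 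So the negative $AB$ contribution is at most $-\frac{3}{20}\cdot\frac b3|L_A|x_{u^*}=-\frac1{20}b|L_A|x_{u^*}$. (Edges from $L_A$ to $B\setminus N(u^*)$ do not belong to $E(L^*)$, so they are irrelevant.)

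Adding the three parts gives
\[
S_{L^*}\le -\bigl(\tfrac1{20}-\tfrac{\eta^2}{500}-o(1)\bigr)b|L_A|x_{u^*}-\tfrac12e(L^*\cap B)x_{u^*},
\]
which yields \eqref{eq:S-defect} with, say, $C=1/40$ for small $\eta$ and large $h$. The right-hand side is nonpositive trivially. I expect the only delicate points to be the bookkeeping for the absorbed $R$-vertices and the bounded-$a$ regime. Both are handled by the $o(\cdot)$ slack in Lemma~\ref{lem:absorb-R}(ii) and by $x_v=o(x_{u^*})$ there.
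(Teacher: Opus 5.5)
Your proposal is correct and follows essentially the same argument as the paper. It uses the same split of $E(L^*)$ into edges inside $L^*\cap A$, cross edges, and edges inside $L^*\cap B$, with per-edge bounds $\frac15x_{u^*}$, $-\frac{3}{20}x_{u^*}$ and $-\frac12x_{u^*}$, and it controls the cross term the same way, via $\Omega(b)$ common neighbours in $N_B(u^*)$ together with $b\ge100a$. The only difference is cosmetic: you bound $e(L^*\cap A)$ through Lemma~\ref{lem:local-wheel} with a separate bounded-$a$ case, whereas the paper uses the trivial bound $e(L^*\cap A)\le a|L^*\cap A|/2$, which already suffices uniformly because $b\ge100a$.
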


\begin{proof}
By the definition of $L$, $x_u\le \frac{3}{5}x_{u^*}$ for every $u\in L^*\cap A$, and Lemma \ref{lem:localization}(iii) gives $x_v\le \frac14 x_{u^*}$ for every $v\in L^*\cap B$. Consequently, an edge inside $L^*\cap A$ contributes at most $\frac15 x_{u^*}$ to $S_{L^*}$, an edge between $L^*\cap A$ and $L^*\cap B$ contributes at most $-\frac{3}{20}x_{u^*}$, and an edge inside $L^*\cap B$ contributes at most $-\frac12 x_{u^*}$. On the other hand, every $u\in L^*\cap A$ has at least
$(1/2-o(1))b$ neighbors in
$N_B(u^*)=L^*\cap B$, whereas
$e(L^*\cap A)\le a|L^*\cap A|/2$.  Since $b\ge100a$, the negative
cross contribution dominates and gives~\eqref{eq:S-defect}.
\end{proof}

\begin{lemma}
\label{lem:split-defect}
It holds that $|L|=k-1$ and
\begin{align}
 -S_{L^*}
 &+\sum_{v\in L}(\rho+d_{L^*}(v))(x_{u^*}-x_v)+\sum_{w\in W}\bigl(d_L(w)x_w+f(w)\bigr)\notag\\
 &=(c_{k-1}-e(L)-\cP)x_{u^*}=O_k(x_{u^*}).
\label{eq:master-defect}
\end{align}
\end{lemma}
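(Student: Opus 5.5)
The plan is to specialize the residual identity of Lemma~\ref{lem:residual} to $d=k-1$, $c=c_k$, $U=N(u^*)=L\cup L^*$ and $W=V(H)\setminus N[u^*]$. We then split $E(U)$ into $E(L)$, $E(L,L^*)$ and $E(L^*)$. With this choice the left side of \eqref{eq:residual-general} is exactly $\cP x_{u^*}$, and the right side is
\[
\sum_{uv\in E(L)}(x_u+x_v-x_{u^*})+\sum_{vy\in E(L,L^*)}(x_v+x_y-x_{u^*})+S_{L^*}-\bigl((k-1)\rho-c_k\bigr)x_{u^*}-\sum_{w\in W}f(w).
\]

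The key step is to rewrite the $L$-part using the eigen-equation at each $v\in L$. Since $v\in N(u^*)$, we have
\[
\rho x_v=x_{u^*}+\sum_{y\in N_L(v)}x_y+\sum_{y\in N_{L^*}(v)}x_y+\sum_{w\in N_W(v)}x_w .
\]
Summing $\rho(x_{u^*}-x_v)$ over $v\in L$ turns $\sum_{v\in L}\sum_{N_L(v)}x_y$ into $\sum_{E(L)}(x_u+x_v)$. It turns $\sum_{v\in L}\sum_{N_{L^*}(v)}x_y$ into the $L^*$-endpoint sum over $E(L,L^*)$, and $\sum_{v\in L}\sum_{N_W(v)}x_w$ into $\sum_{w\in W}d_L(w)x_w$. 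The remaining pieces of the $E(L,L^*)$ sum, namely $\sum_{v\in L}d_{L^*}(v)(x_v-x_{u^*})$, combine with $\rho(x_{u^*}-x_v)$ into the coefficient $\rho+d_{L^*}(v)$. Writing $(k-1)\rho x_{u^*}=|L|\rho x_{u^*}+(k-1-|L|)\rho x_{u^*}$ and collecting terms, I expect to obtain
\[
-S_{L^*}+\sum_{v\in L}(\rho+d_{L^*}(v))(x_{u^*}-x_v)+\sum_{w\in W}\bigl(d_L(w)x_w+f(w)\bigr)
=\bigl(c_k-|L|-e(L)-\cP\bigr)x_{u^*}-(k-1-|L|)\rho x_{u^*}.
\]
This bookkeeping is routine but sign-sensitive; it is the step I would check most carefully.

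Next I show $|L|=k-1$. Every term on the left is nonnegative. Indeed, $-S_{L^*}\ge0$ by Lemma~\ref{lem:S-defect}, and $x_v\le x_{u^*}$ by the choice of $u^*$. Moreover $f(w)=d_U(w)(x_{u^*}-x_w)+\tfrac12 d_W(w)x_{u^*}\ge0$. Hence
\[
(k-1-|L|)\rho\le c_k-|L|-e(L)-\cP\le 2c_k
\]
by \eqref{eq:P-bounded}. Since $\rho\to\infty$, this forces $|L|\ge k-1$, and Lemma~\ref{lem:L-size} gives $|L|\le k-1$. So $|L|=k-1$, and the $\rho$-term disappears.

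With $|L|=k-1$, the identity becomes $(c_k-(k-1)-e(L)-\cP)x_{u^*}=(c_{k-1}-e(L)-\cP)x_{u^*}$, which is \eqref{eq:master-defect}. The bound $O_k(x_{u^*})$ follows from $0\le e(L)\le c_{k-1}$ and $-c_k\le\cP\le0$. The main obstacle is only the careful accounting of the $E(L,L^*)$ edges and of the $x_{u^*}$ term coming from the eigen-equation at $v$. Everything else is immediate from Lemmas~\ref{lem:residual}, \ref{lem:L-size}, \ref{lem:S-defect} and \eqref{eq:P-bounded}.
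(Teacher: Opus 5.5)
Your proposal is correct and follows essentially the same route as the paper. Both proofs specialize Lemma~\ref{lem:residual} with $d=k-1$, $c=c_k$, and substitute the summed eigen-equations over $L$, which yields the defect identity with the extra $-(k-1-|L|)\rho x_{u^*}$ term. They then use nonnegativity of the defect terms, $S_{L^*}\le 0$, and \eqref{eq:P-bounded} to force $|L|=k-1$. Your sign-sensitive accounting of the $E(L,L^*)$ edges checks out, and your explicit appeal to Lemma~\ref{lem:L-size} for $|L|\le k-1$ makes precise a step the paper leaves implicit.
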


\begin{proof}
For each $u\in L$, the eigen--equation at $u$ reads
\[
\rho x_u = \sum_{v\in N_H(u)} x_v.
\]
We decompose $N_H(u)$ into four parts: $N_H(u)\cap L$, $N_H(u)\cap L^*$, $N_H(u)\cap W$, and the vertex $u^*$. Summing over all $u\in L$ gives
\begin{align}
\label{eq:30}
\rho \sum_{u \in L} x_u
&= \sum_{u \in L} \sum_{v \in N_H(u) \cap L} x_v
+ \sum_{u \in L} \sum_{v \in N_H(u) \cap L^*} x_v
+ \sum_{u \in L} \sum_{v \in N_H(u) \cap W} x_v
+ |L| x_{u^*} \notag \\
&= \sum_{uv \in E(L)} (x_u + x_v)
+ \sum_{uv \in E(L, L^*)} x_v
+ \sum_{w \in W} d_L(w) x_w
+ |L| x_{u^*}.
\end{align}
Recall the residual identity from Lemma \ref{lem:residual} with \(d = k-1\),  \(c = c_k\) and (\ref{eq:30}). Then we have
\begin{align*}
 \cP x_{u^*}
 ={}&S_{L^*}-(k-1-|L|)\rho x_{u^*}+
 (c_k-|L|-e(L))x_{u^*}\\
 &-\sum_{v\in L}(\rho+d_{L^*}(v))(x_{u^*}-x_v)-\sum_{w\in W}\bigl(d_L(w)x_w+f(w)\bigr).
\end{align*}
All displayed defect terms are nonnegative after moving them to the
left, and $S_{L^*}\le0$.  If $|L|\le k-2$, this identity and
\eqref{eq:P-bounded} imply
$\cP\le-\rho+O_k(1)$, a contradiction.  Thus $|L|=k-1$, and the
identity simplifies to~\eqref{eq:master-defect}.
\end{proof}

\begin{lemma}
\label{lem:bounded-core}
There is a set $K\subseteq V(H)$ with $|K|=O_k(1)$ such that
$K':=V(H)\setminus K$ is an independent set with $N_H(K')\subseteq K$.
\end{lemma}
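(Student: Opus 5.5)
We take $K:=\{u^*\}\cup L\cup Z$, where $Z$ is a bounded exceptional set extracted from the master identity \eqref{eq:master-defect}. The aim is to show that every vertex outside $K$ is adjacent only to $K$. Every term on the left of \eqref{eq:master-defect} is nonnegative: $-S_{L^*}\ge0$ by Lemma~\ref{lem:S-defect}, $x_v\le x_{u^*}$, $f(w)\ge0$. The right-hand side is $O_k(x_{u^*})$. So each defect family is separately bounded by $O_k(x_{u^*})$, and each such bound will be converted into a bounded count of exceptional vertices or edges.

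\emph{Step 1 ($L^*\cap A$ and $E(L^*\cap B)$).} Lemma~\ref{lem:S-defect} gives $Cb|L^*\cap A|x_{u^*}\le O_k(x_{u^*})$. Since $b\to\infty$, this forces $L^*\cap A=\emptyset$. It also gives $e(L^*\cap B)=O_k(1)$. Hence $N(u^*)\subseteq L\cup B$, and all but $O_k(1)$ vertices of $N(u^*)\setminus L$ are isolated in $H[N(u^*)\setminus L]$.

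\emph{Step 2 ($W$).} For $w\in W$, we use $f(w)\ge d_U(w)(x_{u^*}-x_w)+\tfrac12 d_W(w)x_{u^*}$. Vertices $w\in W\cap B$ have $x_w\le x_{u^*}/4$ by \eqref{eq:split-localization}, so each such $w$ contributes at least $\tfrac34 d_U(w)x_{u^*}$. Vertices $w\in W\cap A$ lie in $A\setminus N(u^*)$, and these are at most $a$ in number. Summing, $\sum_w (d_U(w)+d_W(w))=O_k(1)$, so $e(W)$ and $e(U,W)$ are $O_k(1)$. The vertices of $W$ touching these edges form a bounded set, which we put into $Z$. The remaining vertices of $W$ have all their neighbours in $L$. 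Since $d_L(w)x_w$ also appears on the left, $\sum_{w\in W}d_L(w)x_w=O_k(x_{u^*})$. Moreover every $w\in W$ with a neighbour in $L$ satisfies $\rho x_w\ge x_u>\tfrac35x_{u^*}$. This bounds $|W|$ after removing isolated vertices, so $W$ is absorbed into $K$ entirely.

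\emph{Step 3 ($L$).} The term $\sum_{v\in L}(\rho+d_{L^*}(v))(x_{u^*}-x_v)$ gives $x_v=x_{u^*}(1-O_k(1/\rho))$ for each $v\in L$. Repeating the eigen-equation comparison between $v$ and $u^*$, as in Lemma~\ref{lem:4.4}, then shows that $N_B(v)$ and $N_B(u^*)$ differ in only $O_k(1)$ vertices. These symmetric-difference vertices go into $Z$. Set $K'=V(H)\setminus K$. Every vertex of $K'$ lies in $N(u^*)\cap B$, is isolated in $H[N(u^*)\setminus L]$ by Step~1, and has no neighbour in $W$ by Step~2. Hence $K'$ is independent and $N(K')\subseteq \{u^*\}\cup L\cup Z=K$.

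\emph{Main obstacle.} The delicate point is Step~2 for $w\in W\cap A$. There $x_w$ can be close to $x_{u^*}$, so the factor $x_{u^*}-x_w$ in $f(w)$ provides no lower bound, and the count $d_U(w)$ must be controlled differently. The plan is to use Lemma~\ref{lem:L-size} together with the $W_{2k+1}$-freeness argument of Lemma~\ref{lem:L-size}. A vertex $w\in A$ with $x_w>\tfrac35x_{u^*}$ has $d_B(w)>11b/20$, so it would pair with $L\cup\{u^*\}$ to produce a $C_{2k}$ in some neighbourhood. This forces $x_w\le\tfrac35x_{u^*}$ for $w\in W\cap A$, after which $x_{u^*}-x_w\ge\tfrac25x_{u^*}$ and the counting of Step~2 goes through. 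If this fails, a fallback is to use $\sum_{w\in W\cap A}f(w)=O_k(x_{u^*})$ together with the Perron-mass bound of Lemma~\ref{lem:mass-stability} to show $|W\cap A|=O_k(1)$ directly.
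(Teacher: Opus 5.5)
There is a genuine gap, and it is exactly the step you flag: bounding $W\cap A$. Your Step 1 and your treatment of $W\cap B$ agree with the paper. As written, however, Step 2's conclusion $\sum_{w\in W}(d_U(w)+d_W(w))=O_k(1)$ fails as soon as $W\cap A\neq\varnothing$. Every such $w$ has $d_U(w)\ge d_{L^*}(w)\ge b/3$, by \eqref{eq:minimality} and $d_B(u^*)\ge(1-3\eta^2)b$. The remark ``at most $a$ in number'' does not help, because $a\to\infty$ has not yet been excluded in the split regime; this lemma is what excludes it.

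Your proposed repair is false. You claim that $W_{2k+1}$-freeness forces $x_w\le\frac35x_{u^*}$ on $W\cap A$. But $w\notin N(u^*)$, so $w$ cannot lie on a cycle in $H[N(u^*)]$, and no other centre works. Indeed, $(K_k+tK_1)\vee\overline{K_b}$ is $W_{2k+1}$-free for every $t$: its vertex neighbourhoods are $K_{k-1}\vee\overline{K_b}$, $\overline{K_b}$ and $K_k+tK_1$, and none contains a $C_{2k}$. Yet each of the $t$ extra vertices lies in $W\cap A$ with $x_w=(1-\frac{k-1}{\rho})x_{u^*}$. Moreover, \eqref{eq:master-defect} together with $f(w)\ge\frac b3(x_{u^*}-x_w)$ already gives $x_{u^*}-x_w=O_k(x_{u^*}/b)$. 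So your desired inequality amounts to $W\cap A=\varnothing$, which no local argument can supply. Your fallback fails as well: Lemma~\ref{lem:mass-stability} carries an additive $o(1)$ error. It is therefore consistent with $|W\cap A|\approx a$ and $x_w\approx x_{u^*}\approx(2a)^{-1/2}$.

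The missing idea is spectral, and the paper proceeds as follows.
\begin{enumerate}
\item From $x_{u^*}-x_w=O_k(x_{u^*}/b)$, obtain $x_w\ge x_{u^*}/2$ on $W\cap A$.
\item Hence each $w$ with $d_L(w)>0$ costs at least $x_{u^*}/2$ through the $d_L(w)x_w$ term. Your bound $\rho x_w\ge\frac35x_{u^*}$ would only give $O_k(\rho)$ such vertices.
\item Vertices with $d_W(w)>0$ are endpoints of the $O_k(1)$ edges of $H[W]$.
\item Every remaining $w\in W\cap A$ has $N(w)\subseteq L^*\subseteq N(u^*)$. Comparing the eigen-equations at $u^*$ and $w$ gives $\rho(x_{u^*}-x_w)\ge\sum_{v\in L}x_v\ge\frac35(k-1)x_{u^*}$.
\item Multiply by $d_{L^*}(w)\ge b/3$ and use $b/\rho\ge 9$, which follows from $b\ge100a$ and $ab=(1+o(1))h$. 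This gives $f(w)\ge\frac95(k-1)x_{u^*}$, so there are only $O_k(1)$ such $w$.
\end{enumerate}

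Once $|W|=O_k(1)$, take $K=\{u^*\}\cup L\cup W\cup Q$ and $K'=L^*\setminus Q$, where $Q$ is the set of endpoints of $E(L^*)$. Your Step 3 is then unnecessary, since the lemma does not require $K'$-vertices to share their neighbourhood in $L$. It is also unjustified: $B$-coordinates are only known to be at least $x_{u^*}/\rho$, so the eigen-equation comparison bounds $|N_B(u^*)\triangle N_B(v)|$ by $O_k(\rho)$, not $O_k(1)$.
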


\begin{proof}
By \eqref{eq:S-defect} and~\eqref{eq:master-defect} we obtain
\(
 L^*\cap A=\varnothing,\) and \(e(L^*)=O_k(1).
\)
Moreover, the $d_W(w)x_{u^*}/2$ term in $f(w)$ gives
$e(W)=O_k(1)$.

For $w\in W\cap B$,~\eqref{eq:split-localization} implies
\(
 f(w)\ge\frac34d_{N(u^*)}(w)x_{u^*}+\frac12d_W(w)x_{u^*}.
\)
Every such nonisolated vertex therefore costs at least $x_{u^*}/2$ in
\eqref{eq:master-defect}; hence $|W\cap B|=O_k(1)$.

Now let $w\in W\cap A$.  By~\eqref{eq:minimality}, after the harmless
absorption in Lemma~\ref{lem:absorb-R},
\(
 d_{L^*}(w)\ge b/3.
\)
The term $f(w)$ in~\eqref{eq:master-defect} then gives
\(
 x_{u^*}-x_w=O_k(x_{u^*}/b),
\)
so $x_w\ge x_{u^*}/2$ for large $h$.  Consequently vertices with
$d_L(w)>0$ are bounded in number by the $d_L(w)x_w$ term.
The endpoints of edges in $H[W]$ are also bounded in number because
$e(W)=O_k(1)$.

For every remaining $w\in W\cap A$, we have
$d_L(w)=d_W(w)=0$. Hence $N(w)\subseteq L^*$.  Comparing the
eigen-equations at $u^*$ and $w$ gives
\[
 \rho(x_{u^*}-x_w)
 \ge\sum_{v\in L}x_v=(k-1-o(1))x_{u^*};
\]
here~\eqref{eq:master-defect} was used to deduce
$x_v=x_{u^*}-O_k(x_{u^*}/\rho)$ for $v\in L$.  Since
$d_{L^*}(w)\ge b/3$ and, by~\eqref{eq:ab},
$b/\rho\ge9$ in the split case, each such $w$ contributes at least
$c_k'x_{u^*}$ to $f(w)$.  Thus $|W|=O_k(1)$.

Let $Q$ be the set of endpoints of edges in $H[L^*]$, and put
\[
 K=\{u^*\}\cup L\cup W\cup Q,
 \qquad K'=L^*\setminus Q.
\]
Then $|K|=O_k(1)$, $K'$ is an independent set, and all neighbors of $K'$ lie
in $K$.
\end{proof}

\subsection{Finite-dimensional closure}
Let $K,K'$ be defined as in Lemma~\ref{lem:bounded-core}.  Partition $K'$ by
neighborhood type in $K$.  Write the distinct nonempty types as
$T_1,\ldots,T_s\subseteq K$, their multiplicities as $n_1,\ldots,n_s$,
and their incidence vectors as
$\chi_1,\ldots,\chi_s\in\{0,1\}^{K}$.  Set
\[
 M=\sum_{i=1}^s n_i\chi_i\chi_i^{\mathsf T}.
\]
Let $\boldsymbol{y}$ be the restriction of a Perron vector to $K$, and put
$\tilde{\boldsymbol{y}}=\boldsymbol{y}/\|\boldsymbol{y}\|_2$. We are to obtain the following finite-core identity.
\begin{lemma}
\label{lem:finite-identity}
It holds
\begin{equation}
 \rho^2\boldsymbol{y}=\rho A(K)\boldsymbol{y}+M\boldsymbol{y}.
\label{eq:finite-vector}
\end{equation}
Let
\(
 \xi:=\tr M-\tilde{\boldsymbol{y}}^{\mathsf T}M\tilde{\boldsymbol{y}}.
\)
Then $\xi\ge0$ and
\begin{equation}
 \cP=-\xi+
 \rho\bigl(\tilde{\boldsymbol{y}}^{\mathsf T}A(K)\tilde{\boldsymbol{y}}-(k-1)\bigr)+c_k-e(K).
\label{eq:finite-residual}
\end{equation}
\end{lemma}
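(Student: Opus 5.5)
Since $K'$ is independent and $N_H(K')\subseteq K$, the eigen-equations split cleanly. For a vertex $w\in K'$ of type $T_i$ one has $\rho x_w=\sum_{v\in T_i}x_v=\chi_i^{\mathsf T}\boldsymbol y$, so all vertices of a given type share the coordinate $\chi_i^{\mathsf T}\boldsymbol y/\rho$. For $u\in K$ we split the neighbourhood into its part inside $K$ and its part inside $K'$:
\[
 \rho y_u=(A(K)\boldsymbol y)_u+\sum_{i:\,u\in T_i}n_i\frac{\chi_i^{\mathsf T}\boldsymbol y}{\rho}.
\]
Multiplying by $\rho$ gives $\rho^2\boldsymbol y=\rho A(K)\boldsymbol y+\sum_i n_i\chi_i(\chi_i^{\mathsf T}\boldsymbol y)$. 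This is exactly \eqref{eq:finite-vector}.

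For the residual identity, take the inner product of \eqref{eq:finite-vector} with $\tilde{\boldsymbol y}$ and divide by $\|\boldsymbol y\|$. This yields
\[
 \rho^2=\rho\,\tilde{\boldsymbol y}^{\mathsf T}A(K)\tilde{\boldsymbol y}+\tilde{\boldsymbol y}^{\mathsf T}M\tilde{\boldsymbol y}.
\]
Here $\boldsymbol y\neq0$, since $u^*\in K$ and the Perron vector is positive on the connected $H$. Next we count edges. Every edge of $H$ either lies in $H[K]$ or joins $K$ to $K'$, so
\[
 h=e(K)+\sum_i n_i|T_i|=e(K)+\tr M,
\]
because $\tr(\chi_i\chi_i^{\mathsf T})=|T_i|$. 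Substituting $\rho^2$ and $h$ into $\cP=\rho^2-(k-1)\rho-h+c_k$ and regrouping gives \eqref{eq:finite-residual} with $\xi=\tr M-\tilde{\boldsymbol y}^{\mathsf T}M\tilde{\boldsymbol y}$.

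It remains to show $\xi\ge0$. The matrix $M$ is positive semidefinite, being a nonnegative combination of rank-one matrices $\chi_i\chi_i^{\mathsf T}$. Hence its largest eigenvalue is at most the sum of all its eigenvalues, which is $\tr M$. Since $\tilde{\boldsymbol y}$ is a unit vector, $\tilde{\boldsymbol y}^{\mathsf T}M\tilde{\boldsymbol y}\le\lambda_{\max}(M)\le\tr M$, so $\xi\ge0$.

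None of these steps is a serious obstacle. The only point that needs care is the bookkeeping: $M$ must capture precisely the $K$–$K'$ edges, with no edges inside $K'$ and no neighbours of $K'$ outside $K$. Both facts are guaranteed by Lemma~\ref{lem:bounded-core}.
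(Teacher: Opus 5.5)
Your proof is correct and follows the paper's argument: you substitute the common twin-class coordinate $\chi_i^{\mathsf T}\boldsymbol y/\rho$ into the eigen-equations on $K$, take the inner product with $\tilde{\boldsymbol y}$, and use the edge count $h=e(K)+\tr M$, exactly as the paper does. The only deviation is in showing $\xi\ge0$: the paper writes $\xi=\sum_i n_i\bigl(|T_i|-(\tilde{\boldsymbol y}^{\mathsf T}\chi_i)^2\bigr)$ and applies Cauchy--Schwarz to each term, which gives nonnegativity type by type (reused in Lemma~\ref{lem:one-type} to bound each $\xi_i\le\xi$), whereas your bound $\tilde{\boldsymbol y}^{\mathsf T}M\tilde{\boldsymbol y}\le\lambda_{\max}(M)\le\tr M$ gives only the aggregate inequality, which is all this statement needs.
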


\begin{proof}
All vertices of type $T_i$ have the same Perron coordinate
$\chi_i^{\mathsf T}\boldsymbol{y}/\rho$.  Substitution in the eigen-equations on
$K$ gives~\eqref{eq:finite-vector}.  Multiplying it by
$\boldsymbol{y}^{\mathsf T}/\|\boldsymbol{y}\|_2^2$  yields
\(
 \rho^2=\rho\tilde{\boldsymbol{y}}^{\mathsf T}A(K)\tilde{\boldsymbol{y}}+\tilde{\boldsymbol{y}}^{\mathsf T}M\tilde{\boldsymbol{y}}.
\)
Since
\[
 h=e(K)+\sum_i n_i|T_i|=e(K)+\tr M,
\]
by the definition of \(\cP\), equation~\eqref{eq:finite-residual} follows.  Finally,
\[
 \xi=\sum_i n_i
 \bigl(\|\chi_i\|_2^2-(v^{\mathsf T}\chi_i)^2\bigr)\ge0
\]
follows by the Cauchy--Schwarz inequality.
\end{proof}

\begin{lemma}
\label{lem:one-type}
After moving $O_k(1)$ vertices from $K'$ into $K$, all vertices of $K'$
have one common neighborhood $T\subseteq K$.  Moreover,
\(
 \Delta(K[T])\le k-1.
\)
\end{lemma}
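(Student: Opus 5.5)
We plan to read Lemma~\ref{lem:one-type} off the finite-core identity \eqref{eq:finite-residual}, together with the bound $-c_k\le\cP\le 0$ from \eqref{eq:P-bounded}.

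The first step controls the quadratic term $\tilde{\boldsymbol y}^{\mathsf T}A(K)\tilde{\boldsymbol y}$. From the bounded-core structure, $\boldsymbol y$ is concentrated on $\{u^*\}\cup L$: these $k$ vertices have coordinates $x_{u^*}(1-O_k(1/\rho))$, by \eqref{eq:master-defect}. Every other vertex of $K$ either has bounded degree or was shown to have small coordinate. We therefore expect
\[
\tilde{\boldsymbol y}^{\mathsf T}A(K)\tilde{\boldsymbol y}\le k-1+O_k(\rho^{-1}).
\]
The first-order term is the Rayleigh quotient of a $K_k$, and $H[\{u^*\}\cup L]$ is at most a $K_k$. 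Substituting into \eqref{eq:finite-residual} and using $\cP\ge -c_k$ gives $\xi=O_k(1)$.

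The second step writes
\[
\xi=\sum_i n_i\bigl(|T_i|-(\tilde{\boldsymbol y}^{\mathsf T}\chi_i)^2\bigr).
\]
Since $\tilde{\boldsymbol y}$ is essentially the normalized indicator of a set $T_0$ of size about $k$, plus an $O_k(\rho^{-1})$ perturbation, the summand for type $i$ is bounded below by a positive constant $c(k)$ whenever $T_i$ is not the single dominant type. This holds in particular when $T_i$ contains a vertex outside the support, or when $|T_i|\ne |T_i\cap\mathrm{supp}|^2/\|\cdot\|^2$. The argument is to show that $|T|-(\tilde{\boldsymbol y}^{\mathsf T}\chi_T)^2$ vanishes to first order only for $T=T_0$, using $|T|\ge(\tilde{\boldsymbol y}^{\mathsf T}\chi_T)^2$, with equality iff $\tilde{\boldsymbol y}$ is parallel to $\chi_T$. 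Hence $\sum_{i:T_i\ne T}n_i\le \xi/c(k)=O_k(1)$, and we move those vertices into $K$.

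This move keeps $|K|=O_k(1)$, keeps $K'$ independent, and preserves \eqref{eq:finite-identity} with a single type $T$ of multiplicity $n=(1-o(1))h/|T|$. The main obstacle is the first step: bounding the quadratic form uniformly needs care with the $O_k(1)$ low-coordinate vertices of $K$ and with the second-order $\rho^{-1}$ corrections. We would handle this by a block decomposition of $\tilde{\boldsymbol y}$ into its large and small parts. The small part has size $O_k(\rho^{-1})$, so its contribution to $\rho\,\tilde{\boldsymbol y}^{\mathsf T}A(K)\tilde{\boldsymbol y}$ is $O_k(1)$, which is all that is needed.

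For $\Delta(K[T])\le k-1$, we argue by contradiction. Suppose some $t\in T$ has $k$ neighbors $t_1,\dots,t_k$ in $T$. Since $n\to\infty$, we can pick distinct $w_1,\dots,w_k\in K'$, all of type $T$. Then $t_1w_1t_2w_2\cdots t_kw_kt_1$ is a $C_{2k}$: each $w_i$ is adjacent to all of $T$, and each $w_i\in N(t)$ because $t\in T$. Together with $t$ this gives a $W_{2k+1}$ centered at $t$, a contradiction.
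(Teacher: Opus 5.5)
\paragraph{Gap: the concentration of $\tilde{\boldsymbol y}$.}
Your argument for $\Delta(K[T])\le k-1$ is the paper's, but your first step has a genuine gap. You assert that $\boldsymbol y$ is concentrated on $\{u^*\}\cup L$ up to an $O_k(\rho^{-1})$ error, because every other vertex of $K$ either has bounded degree or a small coordinate. Nothing proved before Lemma~\ref{lem:one-type} gives this. The core is $K=\{u^*\}\cup L\cup W\cup Q$. Your claim is fine for $Q$ and $W\cap B$, which do have bounded degree, but not for $W\cap A$. Lemma~\ref{lem:bounded-core} only bounds $|W|$. Its proof in fact shows that any $w\in W\cap A$ has $d_{L^*}(w)\ge b/3$ and $x_{u^*}-x_w=O_k(x_{u^*}/b)$, so $w$ has large degree and a coordinate essentially equal to $x_{u^*}$. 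Such vertices are not excluded at this stage.

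If such a $w$ is present, $\tilde{\boldsymbol y}$ is not $O_k(\rho^{-1})$-close to $\chi_{\{u^*\}\cup L}/\sqrt k$. Your bound $\tilde{\boldsymbol y}^{\mathsf T}A(K)\tilde{\boldsymbol y}\le k-1+O_k(\rho^{-1})$ then loses its justification: the heavy part of $\tilde{\boldsymbol y}$ also sees $w$ and its edges to $L$, which the comparison with $K_k$ ignores. So you cannot conclude $\xi=O_k(1)$. Your second step identifies the dominant type through the same concentration, so it inherits the gap. Tellingly, your argument would give $T=\{u^*\}\cup L$ and hence $p=k$ for free, whereas the paper only obtains $p=k$ later, from the resolvent expansion in Lemma~\ref{lem:T-clique}.

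\paragraph{The missing idea.}
Let \eqref{eq:finite-residual} locate $\tilde{\boldsymbol y}$ itself rather than importing its location.
\begin{enumerate}
\item There are $O_k(1)$ types and $\sum_i n_i|T_i|=h-O_k(1)$, so by pigeonhole some type $T$ has $n_T=\Theta_k(h)$.
\item Apply your wheel argument to this $T$ first. With $\tilde{\boldsymbol x}:=\chi_T/\sqrt p$, this gives $\tilde{\boldsymbol x}^{\mathsf T}A(K)\tilde{\boldsymbol x}\le k-1$.
\item The trivial bound $\tilde{\boldsymbol y}^{\mathsf T}A(K)\tilde{\boldsymbol y}\le |K|$, together with $\cP\ge -c_k$, gives $\xi=O_k(\sqrt h)$.
\item The $T$-part of $\xi$ is $n_Tp\bigl(1-(\tilde{\boldsymbol y}^{\mathsf T}\tilde{\boldsymbol x})^2\bigr)\le\xi$. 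This forces $\|\tilde{\boldsymbol y}-\tilde{\boldsymbol x}\|_2=O_k(\sqrt{\xi/h})$.
\item Expanding the quadratic form about $\tilde{\boldsymbol x}$ gives $\rho\bigl(\tilde{\boldsymbol y}^{\mathsf T}A(K)\tilde{\boldsymbol y}-(k-1)\bigr)\le C_k\sqrt\xi$. Hence $-c_k\le\cP\le -\xi+C_k\sqrt\xi+c_k$, and so $\xi=O_k(1)$.
\item Only now does your second step go through, with $\chi_T/\sqrt p$ in place of $\chi_{\{u^*\}\cup L}/\sqrt k$. For each $T_i\ne T$, the per-vertex defect tends to $|T_i|-|T_i\cap T|^2/p>0$, so $n_i=O_k(1)$.
\end{enumerate}
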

\begin{proof}
Because $|K|=O_k(1)$ and
$\sum_i n_i|T_i|=h-O_k(1)$, some type $T$ has multiplicity
$n_T=\Theta_k(h)$.  Let $p=|T|$ and $\tilde{\boldsymbol{x}}=\chi_T/\sqrt p$.

If a vertex $z\in T$ had $k$ neighbors
$z_1,\ldots,z_k$ in $K[T]$, choose distinct vertices
$w_1,\ldots,w_k$ from the large twin class.  Then
\[
 z_1w_1z_2w_2\cdots z_kw_kz_1
\]
is a $C_{2k}$ in $N(z)$, a contradiction.  Hence
$\Delta(K[T])\le k-1$, and
\begin{equation}
\tilde{\boldsymbol{x}}^{\mathsf T}A(K)\tilde{\boldsymbol{x}}=\frac{2e(K[T])}{p}\le k-1.
\label{eq:uAu}
\end{equation}

Since $\cP\ge-c_k$, equation~\eqref{eq:finite-residual} first gives
$\xi=O_k(\sqrt h)$.  The contribution of the large type \(T:M_T=n_T\chi_T\chi^{\mathsf T}_T\)  to $\xi$ is
\(
 n_Tp\bigl(1-(\tilde{\boldsymbol{y}}^{\mathsf T}\tilde{\boldsymbol{x}})^2\bigr),
\)
so we have
\[
1-\tilde{\boldsymbol{y}}^{\mathsf T}\tilde{\boldsymbol{x}}\leq1-(\tilde{\boldsymbol{y}}^{\mathsf T}\tilde{\boldsymbol{x}})^2\leq \frac{\xi}{ n_Tp},
\]
and therefore
\[
    \|\tilde{\boldsymbol{y}}-\tilde{\boldsymbol{x}}\|_2=(\tilde{\boldsymbol{y}}-\tilde{\boldsymbol{x}})^{\mathsf T}(\tilde{\boldsymbol{y}}-\tilde{\boldsymbol{x}})=2(1-\tilde{\boldsymbol{y}}^{\mathsf T}\tilde{\boldsymbol{x}})=O_k\bigl(\sqrt{\xi/h}\bigr).
\]
As $h\to\infty$, one has $\|\tilde{\boldsymbol{y}}-\tilde{\boldsymbol{x}}\|_2\to 0$. i.e., \(v=u+o(1)\). Let \(\tilde{\boldsymbol{y}}=\tilde{\boldsymbol{x}}+\boldsymbol{z}\), we have \(||\boldsymbol{z}||_2=\|\tilde{\boldsymbol{y}}-\tilde{\boldsymbol{x}}\|_2=O_k\bigl(\sqrt{\xi/h}\bigr)\).

From the above, substituting~\eqref{eq:uAu} into~\eqref{eq:finite-residual} yields
\[
 -c_k\leq\cP\le-\xi+c_k\sqrt\xi+c_k.
\]
It follows that $\xi=O_k(1)$.

For a different type $T_i\ne T$, its individual defect tends to
\(
 |T_i|-\frac{|T_i\cap T|^2}{p}>0.
\)
The contribution of $T_i$ to $\xi$ is \(\xi_i= n_i
 \bigl(\|\chi_i\|_2^2-(\tilde{\boldsymbol{y}}^{\mathsf T}\chi_i)^2\bigr)= n_i
 \bigl(\|\chi_i\|_2^2-((\tilde{\boldsymbol{x}}+o(1))^{\mathsf T}\chi_i)^2\bigr)\), where \(\|\chi_i\|_2^2=T_i\), \(\tilde{\boldsymbol{x}}^{\mathsf T} \chi_i = \frac{\boldsymbol{x}_T^{\mathsf T}}{\sqrt{p}} \chi_i = \frac{|T_i \cap T|}{\sqrt{p}}.\) Combining this with  $\xi_i\leq\xi=O_k(1)$ yields $n_i=O_k(1)$.  Absorb all these exceptional vertices into $K$.
\end{proof}

We now assume that $|K'|=n$. Let $T$ be the common neighborhood of vertices in $K'$ satisfying $T\subseteq K$ and $|T|=p$.  Let $\chi$ be the incidence vector of $T$. Then we have the following resolvent expansion.
\begin{lemma}
\label{lem:resolvent}
It holds that
\begin{align}
 \rho={}&\sqrt h+\frac{e(K[T])}{p}+\frac{1}{2\sqrt h}
 \left(
 \frac{\sum_{z\in K}d_T(z)^2}{p}-e(K)
 -3\left(\frac{e(K[T])}{p}\right)^2
 \right)+O_k(h^{-1}).
\label{eq:resolvent-expansion}
\end{align}
\end{lemma}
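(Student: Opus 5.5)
The plan is to reduce the Perron eigen-equations to a single scalar secular equation for $\rho$, and then read off the expansion by matching powers of $\sqrt h$. By Lemma~\ref{lem:one-type}, every vertex $v\in K'$ has $N_H(v)=T$, so its eigen-equation gives $x_v=\chi^{\mathsf T}\boldsymbol{y}/\rho$. Substituting this into the eigen-equations at the vertices of $K$ yields
\[
 \bigl(\rho^2 I-\rho A(K)\bigr)\boldsymbol{y}=n\,\chi\,(\chi^{\mathsf T}\boldsymbol{y}),
\]
which is the one-type case of \eqref{eq:finite-vector}. We need two facts about the data.
\begin{itemize}
\item Since $|K|=O_k(1)$, we have $\|A(K)\|\le |K|=O_k(1)$. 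As $\rho=(1+o(1))\sqrt h$, the matrix $I-A(K)/\rho$ is invertible, with a convergent Neumann series.
\item The Perron vector is positive, so $\chi^{\mathsf T}\boldsymbol{y}>0$.
\end{itemize}
Solving for $\boldsymbol{y}$ and multiplying by $\chi^{\mathsf T}$, then cancelling $\chi^{\mathsf T}\boldsymbol{y}$, gives the exact scalar identity
\[
 \rho^2=n\,\chi^{\mathsf T}\Bigl(I-\tfrac{1}{\rho}A(K)\Bigr)^{-1}\chi
 =n\Bigl(p+\frac{\chi^{\mathsf T}A(K)\chi}{\rho}+\frac{\chi^{\mathsf T}A(K)^2\chi}{\rho^2}+O_k(\rho^{-3})\Bigr).
\]

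Next I identify the combinatorial quantities and eliminate $n$.
\begin{itemize}
\item $\chi^{\mathsf T}A(K)\chi=2e(K[T])$.
\item $\chi^{\mathsf T}A(K)^2\chi=\|A(K)\chi\|^2=\sum_{z\in K}d_T(z)^2$.
\item From $h=e(K)+np$ we get $n=(h-e(K))/p$. Hence $n=\Theta_k(h)$, and the Neumann remainder contributes $O_k(h/\rho^3)=O_k(h^{-1/2})$ to $\rho^2$.
\end{itemize}
The identity then becomes
\[
 \rho^2=h-e(K)+\frac{2e(K[T])}{p}\cdot\frac{h-e(K)}{\rho}+\frac{\sum_z d_T(z)^2}{p}\cdot\frac{h}{\rho^2}+O_k(h^{-1/2}).
\]

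Now insert the ansatz $\rho=\sqrt h+\alpha+\beta h^{-1/2}+\delta$, with $\delta=O_k(h^{-1})$ to be justified.
\begin{itemize}
\item We have $h/\rho=\sqrt h-\alpha+O_k(h^{-1/2})$ and $h/\rho^2=1+O_k(h^{-1/2})$.
\item Matching the $\sqrt h$ terms gives $2\alpha=2e(K[T])/p$, so $\alpha=e(K[T])/p$.
\item Matching the constant terms gives $\alpha^2+2\beta=-e(K)-2\alpha\,e(K[T])/p+\sum_z d_T(z)^2/p$. This yields $2\beta=\sum_z d_T(z)^2/p-e(K)-3\alpha^2$, exactly the coefficient in \eqref{eq:resolvent-expansion}.
\end{itemize}

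To make this rigorous rather than formal, set $\Psi(\rho):=\rho^2-n\chi^{\mathsf T}(I-A(K)/\rho)^{-1}\chi$. On the window $\rho\in[\sqrt h-C_k,\sqrt h+C_k]$, which is valid a priori by Lemma~\ref{lem:candidate-expansion} and \eqref{eq:P-bounded}, the derivative satisfies $\Psi'(\rho)=2\rho+O_k(n/\rho^2)=(2+o(1))\sqrt h$. Evaluating $\Psi$ at the truncated ansatz $\sqrt h+\alpha+\beta h^{-1/2}$ gives $O_k(h^{-1/2})$. The mean value theorem then places the true root within $O_k(h^{-1/2})/\sqrt h=O_k(h^{-1})$ of the ansatz.

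I expect the main obstacle to be this bookkeeping of error terms. One must check that every discarded term in $\Psi$, namely the Neumann tail, the $e(K)/\rho$ correction inside $n/\rho$, and the $O(h^{-1/2})$ parts of $h/\rho^2$, is genuinely $O_k(h^{-1/2})$ uniformly. All constants must depend only on $k$, through $|K|=O_k(1)$, so that the final remainder is $O_k(h^{-1})$ rather than merely $o(h^{-1/2})$.
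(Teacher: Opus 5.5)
Your proposal is correct and follows the paper's route. Both use the Schur-complement secular equation $\rho=n\chi^{\mathsf T}(\rho I-A(K))^{-1}\chi$ (your form is this multiplied by $\rho$), the Neumann expansion with $\chi^{\mathsf T}A(K)\chi=2e(K[T])$ and $\chi^{\mathsf T}A(K)^2\chi=\sum_{z\in K}d_T(z)^2$, and coefficient matching against the ansatz $\sqrt h+c+d/\sqrt h$. Your version is somewhat more careful than the paper's in two places. First, the mean-value-theorem argument on $\Psi$, over the a priori window from \eqref{eq:P-bounded}, actually proves the $O_k(h^{-1})$ remainder, which the paper simply builds into its ansatz. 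Second, your explicit elimination $n=(h-e(K))/p$ keeps the factor $n$ that the paper's intermediate display drops.
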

\begin{proof}
By Lemma \ref{lem:bounded-core} and Lemma \ref{lem:one-type}, we have
\(
V(H)=K\cup K',\quad K\cap K'=\emptyset,\quad |K'|=n,
\)
and
\[
A=
\begin{pmatrix}
A(K) & \chi \boldsymbol{1}_n^{\mathsf T} \\
\boldsymbol{1}_n \chi^{\mathsf T} & O_{n\times n}
\end{pmatrix}.
\]
It follows readily that
\begin{equation}
 \rho=n\chi^{\mathsf T}(\rho I-A(K))^{-1}\chi,
\label{eq:schur}
\end{equation}
here $I$ denotes the identity matrix.

For large $\rho$, one has
\(
\rho I - A(K) = \rho\bigl(I - \rho^{-1}A(K)\bigr),
\)
and so
\[
\bigl(\rho I - A(K)\bigr)^{-1} = \frac{1}{\rho}\bigl(I - \rho^{-1}A(K)\bigr)^{-1}.
\]
Provided the operator norm $\bigl\|\rho^{-1}A(K)\bigr\| < 1$, which holds for sufficiently large $\rho$, we expand the inverse via the Von Neumann series:
\[
\bigl(I - \rho^{-1}A(K)\bigr)^{-1}
=\sum_{t=0}^{\infty}\frac{A(K)^t}{\rho^t}
= I+\frac{A(K)}{\rho}+\frac{A(K)^2}{\rho^2}+O_k(\rho^{-3}).
\]
Substituting back yields the resolvent expansion
\[
\bigl(\rho I - A(K)\bigr)^{-1}
=\frac{I}{\rho}+\frac{A(K)}{\rho^2}+\frac{A(K)^2}{\rho^3}+O_k(\rho^{-4}).
\]
Since the dimension of $K$ is a constant depending only on $k$, we write the remainder term as $O_k(\cdot)$.
Multiplying by $\chi^{\mathsf T}$ on the left and $\chi$ on the right gives
\begin{align*}
\chi^{\mathsf T}\bigl(\rho I-A(K)\bigr)^{-1}\chi
&=\chi^{\mathsf T}\left(\frac{I}{\rho}+\frac{A(K)}{\rho^2}+\frac{A(K)^2}{\rho^3}+O_k(\rho^{-4})\right)\chi\\
&=\frac{\chi^{\mathsf T} I\chi}{\rho}+\frac{\chi^{\mathsf T} A(K)\chi}{\rho^2}+\frac{\chi^{\mathsf T} A(K)^2\chi}{\rho^3}+O_k(\rho^{-4}).
\end{align*}
So we have
\[
 \chi^{\mathsf T}(\rho I-A(K))^{-1}\chi
 =\frac{p}{\rho}+
 \frac{2e(K[T])}{\rho^2}+
 \frac{\sum_{z\in K}d_T(z)^2}{\rho^3}
 +O_k(\rho^{-4}).
\]
Furthermore, we obtain
\begin{equation}
 \rho=  \frac{p}{\rho}+
 \frac{2e(K[T])}{\rho^2}+
 \frac{\sum_{z\in K}d_T(z)^2}{\rho^3}
 +O_k(\rho^{-4}).
\label{eq:1}
\end{equation}

Since $h=np+e(K)$, substitute
\begin{equation}
\label{eq:2}
\rho=\sqrt h+c+d/\sqrt h+O_k(h^{-1})
\end{equation}
into
\eqref{eq:1}. By means of the Taylor expansion for $\frac{1}{\rho}$ (resp. $\frac{1}{\rho^2}$ and $\frac{1}{\rho^3}$).
The explicit expansions obtained from the Taylor series are as follows:
\[
\rho = \sqrt{h} \left( 1 + \frac{c}{\sqrt{h}} + \frac{d}{h} + O(h^{-3/2}) \right).
\]

Let
\(
t = \frac{c}{\sqrt{h}} + \frac{d}{h} + O(h^{-3}/2).
\)
Then
\[
\frac{1}{\rho} = \frac{1}{\sqrt{h}} \cdot \frac{1}{1+t} = \frac{1}{\sqrt{h}} \left( 1 - t + t^2 - t^3 + \cdots \right),
\ \
\frac{1}{\rho^2} = \frac{1}{h} (1+t)^{-2} = \frac{1}{h} \left( 1 - 2t + 3t^2 - 4t^3 + \cdots \right),
\]
and
\[
\frac{1}{\rho^3} = \frac{1}{h^{3/2}} (1+t)^{-3} = \frac{1}{h^{3/2}} \left( 1 - 3t + 6t^2 - 10t^3 + \cdots \right).
\]
Finally, by comparing the coefficients of the \(\sqrt{h}\)-term and the constant term on both sides of (\ref{eq:1}) and (\ref{eq:2}), gives
\[
 c=\frac{e(K[T])}{p},
\qquad
 d=\frac12\left(
 \frac{\sum_{z\in K}d_T(z)^2}{p}-e(K)-3c^2
 \right),
\]
which is~\eqref{eq:resolvent-expansion}. 
\end{proof}
Recall that $T$ is the common neighborhood of vertices in $K'$ and $T\subseteq K$ with $|T|=p$.
\begin{lemma}
\label{lem:T-clique}
It holds that \(p=k\) and \( K[T]\cong K_k.\)
\end{lemma}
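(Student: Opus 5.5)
The plan is to compare the resolvent expansion of Lemma~\ref{lem:resolvent} with the lower bound $\rho\ge s_k(h)$ from Lemma~\ref{lem:core-extraction} and the expansion~\eqref{eq:candidate-expansion}, first at constant order and then, if needed, using the structural constraint $\Delta(K[T])\le k-1$ from Lemma~\ref{lem:one-type}.

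\emph{Constant-order comparison.} By~\eqref{eq:resolvent-expansion}, $\rho=\sqrt h+e(K[T])/p+O_k(h^{-1/2})$. By~\eqref{eq:candidate-expansion},
\[
\rho\ge s_k(h)=\sqrt h+\tfrac{k-1}{2}+O_k(h^{-1/2}).
\]
Since both constant terms are independent of $h$, letting $h\to\infty$ gives
\[
\frac{e(K[T])}{p}\ge\frac{k-1}{2}.
\]
On the other hand, $\Delta(K[T])\le k-1$ gives $2e(K[T])\le (k-1)p$. Hence equality holds, and $K[T]$ is $(k-1)$-regular on $p$ vertices.

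\emph{Identifying $K[T]$ as $K_k$.} This is the main obstacle, since every $(k-1)$-regular graph has the same constant term. I would use the $W_{2k+1}$-freeness directly. Take $z\in T$, its $k-1$ neighbours $z_1,\dots,z_{k-1}$ in $T$, and the huge twin class $K'$, all of which is adjacent to all of $T$. Then $N(z)\supseteq\{z_1,\dots,z_{k-1}\}\cup K'$.

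\begin{itemize}
\item If some $z_i$ has a neighbour $y\in T\setminus N[z]$ (which is automatic when $p>k$ and $K[T]$ is connected), then $y\notin N(z)$, so this does not directly give a longer cycle.
\item Instead, suppose some neighbour $z_1$ of $z$ has a further neighbour $z_2$ among $z$'s neighbours, i.e.\ $z$ lies in a triangle $zz_1z_2$. Then $z_1z_2$ is an edge inside $N(z)$. The cycle $z_1 z_2 w_2 z_3 w_3\cdots z_{k-1}w_{k-1}z_1$, with the $w_i\in K'$ distinct, has $2+2(k-2)=2k-2$ vertices, so this falls short of $2k$.
\end{itemize}

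So the pure forbidden-subgraph route is delicate, and I would instead go to second order. With $e(K[T])=(k-1)p/2$, Lemma~\ref{lem:resolvent} gives a coefficient
\[
d=\tfrac12\Bigl(\sum_{z\in K}d_T(z)^2/p-e(K)-3(k-1)^2/4\Bigr),
\]
while $s_k(h)$ has coefficient
\[
-\frac{k^2-1}{8}-\frac{r(k-r)}{2k}.
\]
For $z\in T$ we have $d_T(z)=k-1$, which contributes $(k-1)^2$ to $\sum_{z\in K}d_T(z)^2/p$.

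\emph{Second-order inequality.} Vertices of $K\setminus T$ add $d_T(z)^2/p$ to the sum but also add at least $d_T(z)$ edges to $e(K)$. Also $e(K)\ge e(K[T])=(k-1)p/2$. The inequality $d\ge -\frac{k^2-1}{8}-\frac{r(k-r)}{2k}$ then has to force $p$ small, and I expect it to yield $p\le k$. I would verify this by showing that each vertex outside $T$ contributes $d_T(z)^2/p-d_T(z)\le 0$ to the bracket. If $p\le k$, a $(k-1)$-regular graph on $p$ vertices must have $p=k$, and the only such graph is $K_k$. This step is the one I expect to need care: the term $-e(K)$ must be balanced against the $-(k-1)p/2$ coming from $e(K[T])$, and the residue term $r(k-r)/(2k)$ must not be large enough to permit $p=k+1$ or $k+2$. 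If the comparison is too tight, I would fall back on the exact identity~\eqref{eq:finite-residual} together with $\cP\ge-c_k$ from~\eqref{eq:P-bounded}, which bounds $e(K)\le c_k+O_k(1)$ and hence forces $p(k-1)/2\le\binom k2$, giving $p\le k$.
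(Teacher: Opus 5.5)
Your first step and your set-up of the second-order comparison are the paper's route. The constant term forces $e(K[T])/p=(k-1)/2$, hence $K[T]$ is $(k-1)$-regular, and one then compares the $h^{-1/2}$ coefficients of \eqref{eq:resolvent-expansion} and \eqref{eq:candidate-expansion}.

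\textbf{The gap.} You stop before the decisive comparison. You only ``expect'' it to give $p\le k$, and you worry that the residue term might permit $p=k+1$ or $k+2$. That comparison is the entire content of the lemma beyond regularity. It closes in one line from your own two observations. Put $J=K\setminus T$, $d_z=d_T(z)$, and use the exact count $e(K)=\frac{(k-1)p}{2}+\sum_{z\in J}d_z+e(K[J])$. Then
\[
d=\frac{(k-1)(k-1-2p)}{8}-\frac{1}{2p}\sum_{z\in J}d_z(p-d_z)-\frac12\,e(K[J])\le\frac{(k-1)(k-1-2p)}{8}.
\]
The right-hand side is strictly decreasing in $p$, so only $p=k+1$ needs checking. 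There the bound equals $-\frac{k^2-1}{8}-\frac{k-1}{4}$, while the candidate coefficient is $-\frac{k^2-1}{8}-\frac{r(k-r)}{2k}$, and
\[
\frac{r(k-r)}{2k}\le\frac{k}{8}<\frac{k-1}{4}\qquad\text{for } k\ge 3 .
\]
So every $p\ge k+1$ loses a fixed positive amount in the $h^{-1/2}$ coefficient. Since $|K|=O_k(1)$, there are only finitely many configurations, so this gap beats the uniform $O_k(h^{-1})$ remainder and contradicts $\rho\ge s_k(h)$. Then $p=k$ and $(k-1)$-regularity give $K[T]\cong K_k$, as you say. This inequality is exactly where $k\ge 3$ is used; for $k=2$, $r=1$ it becomes an equality. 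Your proposal does not identify this.

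\textbf{The fallback fails.} Your fallback is not valid and should be dropped. From \eqref{eq:finite-residual} and $\cP\ge -c_k$ you only get
\[
e(K)\le 2c_k-\xi+\rho\bigl(\tilde{\boldsymbol y}^{\mathsf T}A(K)\tilde{\boldsymbol y}-(k-1)\bigr).
\]
The last term is $\rho$ times an $O_k(h^{-1/2})$ deviation, i.e.\ an $O_k(1)$ quantity. It is actually positive when $J$ sends edges to $T$; to leading order it equals $2\sum_{z\in J}d_z^2/p$.
\begin{itemize}
\item This yields only $e(K)=O_k(1)$, which you already know from $|K|=O_k(1)$. An unspecified additive $O_k(1)$ can never force $\frac{(k-1)p}{2}\le\binom k2$.
\item Even if you discard that term, $\cP\ge-c_k$ gives only $e(K[T])\le 2c_k$, i.e.\ $p\le 2k$.
\item Making the identity sharp would require the precise bound $\cP\ge\cP_h(s_k(h))=-\frac{r(k-r)}{k}+o(1)$ together with a second-order evaluation of $\rho\bigl(\tilde{\boldsymbol y}^{\mathsf T}A(K)\tilde{\boldsymbol y}-(k-1)\bigr)$. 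That is the resolvent computation over again.
\end{itemize}
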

\begin{proof}
By Lemma~\ref{lem:one-type},
\[
 \frac{e(K[T])}{p}\le\frac{k-1}{2}.
\]
If the inequality is strict, the constant term in
\eqref{eq:resolvent-expansion} is strictly less than the constant term
in~\eqref{eq:candidate-expansion}.  Hence equality is necessary, so
$K[T]$ is $(k-1)$-regular and $p\ge k$.

Put $J=K\setminus T$ and $d_z=d_T(z)$ for $z\in J$.  The coefficient
of $h^{-1/2}$ in~\eqref{eq:resolvent-expansion} is
\begin{equation}
 \beta(K,T)=
 \frac{(k-1)(k-1-2p)}8
 -\frac1{2p}\sum_{z\in J}d_z(p-d_z)
 -\frac12e(K[J]).
\label{eq:beta-general}
\end{equation}
Suppose $p\ge k+1$, we have
\[
\beta(K,T)\leq\frac{(k-1)(-k-3)}8=-(\frac{k^2-1}{8}+\frac{k-1}{4})<-(\frac{k^2-1}{8}+\frac{r(k-r)}{2k})
\]
for $k\ge3$ and \(0\leq r\leq k\).  Thus $\beta(K,T)$ in~\eqref{eq:beta-general} is strictly smaller than
the candidate coefficient in~\eqref{eq:candidate-expansion}. When \(h\) is sufficiently large, we have \(\rho<s_k(h)\), a
contradiction. Therefore $p=k$, and $(k-1)$-regularity gives
$K[T]=K_k$.
\end{proof}

The final optimization is purely modular, we called it the modular defect inequality.
\begin{lemma}
\label{lem:modular}
Let $0\le d_1,\ldots,d_s\le k$ and $e\ge0$ be integers satisfying
\(
 d_1+\cdots+d_s+e\equiv r\pmod k\) with
\( 1\le r\le k-1.
\)
Then
\begin{equation}
 \sum_{i=1}^s d_i(k-d_i)+ke\ge r(k-r).
\label{eq:modular}
\end{equation}
Equality holds if and only if $e=0$ and, after deleting terms equal to
$0$ or $k$, exactly one term remains and it equals $r$.
\end{lemma}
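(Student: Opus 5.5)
Discard the terms with $d_i\in\{0,k\}$ first: they contribute $0$ to the left side of \eqref{eq:modular} and $0$ modulo $k$ to the congruence. So assume every $d_i\in\{1,\ldots,k-1\}$. The basic tool is the elementary inequality, for integers $1\le x\le k-1$,
\[
x(k-x)\ge k-1,
\]
with equality iff $x\in\{1,k-1\}$. It holds because $x(k-x)-(k-1)=(x-1)(k-1-x)\ge0$. We will also use the superadditivity-type inequality
\[
x(k-x)+y(k-y)\ge \bar z(k-\bar z),\qquad \bar z\equiv x+y \pmod k,\ 0\le\bar z\le k-1 .
\]
To see it, note that if $x+y\le k$ then $\bar z\in\{x+y,0\}$ and $x(k-x)+y(k-y)-(x+y)(k-x-y)=2xy>0$. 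If $x+y>k$, write $\bar z=x+y-k$, substitute $x'=k-x$, $y'=k-y$ (so $\bar z=k-x'-y'$), and we are back in the first case. In every case equality is impossible when $x,y\ge1$.

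The plan is induction on $s$ (number of surviving terms) plus $e$. If $e\ge1$, we handle the $ke$ term directly: $ke\ge k> \max_{r}r(k-r)$ fails in general (for instance $k=4$, $r=2$ gives $r(k-r)=4=k$). So instead we treat each unit of $e$ as a term $x=1$ with weight $k>k-1=1\cdot(k-1)$. Concretely, replace $e$ by $e$ additional terms $d=1$. This keeps the congruence and strictly decreases the left-hand side when $e\ge1$, since $k>k-1$. It therefore suffices to prove the inequality with $e=0$, and in that case strictness holds whenever $e\ge1$ originally.

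With $e=0$ and all $d_i\in[1,k-1]$, repeatedly merge two terms $x,y$ into $\bar z$ using the superadditivity inequality. If $\bar z=0$, delete that term. Each merge does not increase the sum and strictly decreases it, preserves the residue, and reduces $s$. The process ends with a single term, which must equal $r$ because the total is $\equiv r\not\equiv0$. The final value is $r(k-r)$, which proves \eqref{eq:modular}.

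For equality we need no merge to have occurred and $e=0$, i.e.\ exactly one nontrivial term, equal to $r$. Conversely that configuration clearly attains equality. The main point needing care is the strictness of the merge inequality including the case $\bar z=0$ (where the left side is positive), and the reduction of $e$. Both are checked above by the elementary computation.
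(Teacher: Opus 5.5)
Your proof is correct and follows essentially the same route as the paper: both rest on the merge inequality $f(a)+f(b)\ge f(\overline{a+b})$ for $f(t)=t(k-t)$, with defects $2ab$ and $2(k-a)(k-b)$, and both treat each of the $e$ units as a residue-one item of cost $k>f(1)=k-1$, reading off the equality case from strictness at each step. Your version is slightly cleaner: because you discard the terms equal to $0$ or $k$ up front, every merge of two nontrivial terms is strictly decreasing, so the equality characterization follows immediately.
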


\begin{proof}
Let $f(t)=t(k-t)$.  For $0\le a,b\le k$, one has
\(
 f(a)+f(b)-f(a+b)=2ab\ge0\)
when \(a+b\le k,
\)
and
\(
 f(a)+f(b)-f(a+b-k)=2(k-a)(k-b)\ge0\) when
\(a+b > k.\)

Note \(f(0)=f(k)=0\), we have \(f(a)+f(b)\geq f(r')\), where $r'$ is the remainder upon dividing $a+b$ by $k$. Equality holds if and only if \(a=0\) or \(b=0\) (when \(a+b\leq k\)), and \(a=k\) or \(b=k\) (when \(a+b> k\)). That is to say, merging two residues modulo $k$ never increases total cost.
Treat each of the $e$ internal edges as a residue-one item of cost $k$;
this cost is strictly greater than $f(1)=k-1$.  Repeated merging gives
\begin{align}
\sum_{i=1}^s d_i(k-d_i)+ke&\geq f(d_1)+\cdots f(d_s)+f(1)+\cdots+f(1)\notag\\
                          &=f(d_1)+\cdots f(d_s)+e(k-1)\notag\\
                          &\geq f(r)\tag{$d_1+\cdots +d_s+e\equiv r \pmod k$}\\
                          &=r(k-r),\notag
\end{align}
so we obtain
\eqref{eq:modular}. Equality holds if and only if equality is attained in every step of the summation, i.e., \(d_i = k\) or \(d_i = 0\), after deleting terms equal to $0$ or $k$, exactly one term remains and it equals $r$ and \(e=0\).
\end{proof}
Recall that $H$ is $W_{2k+1}$-free and it is an $(\varepsilon,k)$-core satisfying $e(H)=h\to\infty$, $h-c_k\equiv r\pmod k$ with $1\le r\le k-1$.
\begin{proposition}
\label{prop:finite-extremum}
It holds
\(
 H\cong S_{k,h}
\)
up to isolated vertices, and $\rho(H)=s_k(h)$.
\end{proposition}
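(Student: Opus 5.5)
We already know that $V(H)=K\cup K'$, with $|K|=O_k(1)$, $K'$ independent, and every vertex of $K'$ adjacent to exactly the clique $T$, where $|T|=k$ and $K[T]\cong K_k$ (Lemmas~\ref{lem:one-type} and~\ref{lem:T-clique}). The plan is to compare the $h^{-1/2}$ coefficient in the resolvent expansion~\eqref{eq:resolvent-expansion} with the candidate coefficient in~\eqref{eq:candidate-expansion}. The resulting inequality is then resolved with the modular defect inequality, Lemma~\ref{lem:modular}.

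\textbf{Step 1: the coefficient.} Put $p=k$ and $J=K\setminus T$, and let $d_z=d_T(z)\in\{0,\dots,k\}$ for $z\in J$. Then~\eqref{eq:beta-general} becomes
\[
\beta(K,T)=-\frac{k^2-1}{8}-\frac{1}{2k}\Bigl(\sum_{z\in J}d_z(k-d_z)+k\,e(K[J])\Bigr).
\]
Both expansions have constant term $(k-1)/2$. The condition $\rho(H)\ge s_k(h)$ therefore forces $\beta(K,T)\ge-\frac{k^2-1}{8}-\frac{r(k-r)}{2k}$, that is,
\[
\sum_{z\in J}d_z(k-d_z)+k\,e(K[J])\le r(k-r).
\]
If this inequality were strict, the two expansions would differ by a positive multiple of $h^{-1/2}$ with error $O_k(h^{-1})$, giving $\rho<s_k(h)$. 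So the inequality must hold, and we record it.

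\textbf{Step 2: congruence and Lemma~\ref{lem:modular}.} Count the edges: $h=kn+c_k+\sum_{z\in J}d_z+e(K[J])$. Since $h-c_k\equiv r\pmod k$, we get $\sum_{z\in J}d_z+e(K[J])\equiv r\pmod k$. This is exactly the hypothesis of Lemma~\ref{lem:modular}, which gives the reverse inequality $\ge r(k-r)$. Hence equality holds throughout. By the equality case, $e(K[J])=0$, every $d_z$ lies in $\{0,k\}$ except for a single vertex $z_0$ with $d_{z_0}=r$.

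\textbf{Step 3: identify the graph.} A vertex $z\in J$ with $d_z=k$ and $e(K[J])=0$ has neighborhood exactly $T$, so it is a twin of $K'$ and can be placed back into $K'$. A vertex $z\in J$ with $d_z=0$ has no neighbors, because its neighbors could only lie in $T$, in $J$ (excluded by $e(K[J])=0$), or in $K'$, whose neighbors are all in $T$. Such a vertex is isolated and is discarded. What remains is the clique $T$, the independent twin class, and $z_0$ joined to exactly $r$ vertices of $T$. This is $S_{k,h}$ with $q=(h-c_k-r)/k$, so $\rho(H)=s_k(h)$ by Proposition~\ref{prop:candidate}.

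\textbf{Main obstacle.} The delicate point is Step 1: we need the error in~\eqref{eq:resolvent-expansion} to be uniformly $O_k(h^{-1})$, so that a strict gap of at least $\frac{1}{2k}h^{-1/2}$ in the integer quantity genuinely forces $\rho<s_k(h)$. This uniformity holds because $|K|=O_k(1)$ leaves only finitely many configurations $(K,T)$, and each constant depends only on the configuration. A second point to check is that vertices moved between $K$ and $K'$ in Lemma~\ref{lem:one-type} do not alter the edge count used in the congruence of Step 2.
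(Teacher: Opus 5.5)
Your proposal is correct and follows the paper's proof essentially step for step: specialize \eqref{eq:beta-general} to $p=k$, use $h=kn+c_k+\sum_{z\in J}d_z+e(K[J])$ to get the congruence required by Lemma~\ref{lem:modular}, use finiteness of the bounded configurations $(K,T)$ so that any strict inequality gives a gap in the $h^{-1/2}$ coefficient that dominates the $O_k(h^{-1})$ remainder, and read off $S_{k,h}$ from the equality case. The only differences are that you derive the upper bound from $\rho(H)\ge s_k(h)$ before invoking the lemma rather than after, and one wording slip in Step~1: the contradiction $\rho<s_k(h)$ arises when that inequality \emph{fails}, i.e.\ when $\sum_{z\in J}d_z(k-d_z)+k\,e(K[J])>r(k-r)$, not when it is strict.
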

\begin{proof}
By Lemmas~\ref{lem:bounded-core}--\ref{lem:T-clique}, $H$ consists of
a bounded graph $K$, a large independent twin class $K'$ with common
neighborhood $T$, and
\(
 |T|=k,\ K[T]=K_k.
\)
Let $J=K\setminus T$ and put $d_z=d_T(z)$.  Since
\(
 h=nk+c_k+\sum_{z\in J}d_z+e(K[J]),
\)
we have
\[
 \sum_{z\in J}d_z+e(K[J])\equiv r\pmod k.
\]
Note that now with $p=k$. Hence Formula~\eqref{eq:beta-general} shows that the
second-order penalty is
\[
 \frac1{2k}\left(
 \sum_{z\in J}d_z(k-d_z)+k e(K[J])
 \right).
\]
By Lemma~\ref{lem:modular}, this is at least
$r(k-r)/(2k)$, exactly the penalty of $S_{k,h}$ in
\eqref{eq:candidate-expansion}.  All possible bounded pairs $(K,T)$
form a finite family.  Hence every strict inequality produces a fixed
positive gap in the coefficient of $h^{-1/2}$, which dominates the
uniform $O_k(h^{-1})$ remainder in
Lemma~\ref{lem:resolvent}.

Equality in Lemma~\ref{lem:modular} forces $e(K[J])=0$ and exactly one
vertex of partial $T$-degree $r$; moreover, every remaining vertex \( z \in J \) satisfies \( d_T(z) = k \) or \( d_T(z) = 0 \). Note that each vertex in $\{x|x\in J, d_T(x)=0\}$ is isolated.  Thus the nontrivial component is exactly
$S_{k,h}$.
\end{proof}

Now we are ready to prove our main result.
\begin{proof}[\bf Proof of Theorem~\ref{thm:main}]
The case $r=0$ is Theorem~\ref{thm:FZZ}, so assume $1\le r\le k-1$.
Suppose that an $m$-edge $W_{2k+1}$-free graph $G$
satisfies
\(
 \rho(G)\ge s_k(m).
\)
Apply Lemma~\ref{lem:core-extraction} to obtain an
$(\varepsilon,k)$-core $H$ with
\[
 h=(1-o(1))m,\qquad h\equiv m\pmod k,\qquad
 \rho(H)\ge s_k(h).
\]
Proposition~\ref{prop:finite-extremum} gives
$H\cong S_{k,h}$ and $\rho(H)=s_k(h)$.  The strict part of
Lemma~\ref{lem:core-extraction} now implies $H=G$.  Consequently
$h=m$ and $G\cong S_{k,m}$.

Conversely, Proposition~\ref{prop:candidate} shows that $S_{k,m}$ is
$W_{2k+1}$-free and has $m$ edges.  This proves the upper bound and the
equality characterization.
\end{proof}

\section{Concluding remarks}\label{s4}

The congruence restriction in the dense-core definition is essential.
For a fixed residue $r$,
\[
 \frac{s_{k,r}(m)}{\sqrt m}
 =1+\frac{k-1}{2\sqrt m}
 -\left(\frac{k^2-1}{8}+\frac{r(k-r)}{2k}\right)m^{-1}
 +O_k(m^{-3/2}),
\]
whose derivative is $O_k(m^{-3/2})$.  Across consecutive integers,
however, the residue-dependent coefficient jumps by order $m^{-1}$,
which is too large for a one-edge dense-core comparison.  Deleting a
multiple of $k$ edges is precisely what keeps the target on one smooth
branch.

The finite-core portion of the argument is related in spirit to the
bounded-outer-layer and twin-class method of Das and Yamini
\cite{DasYamini}.  Here the local $C_{2k}$ obstruction supplies the
degree bound on the dominant neighborhood type, while the residue class
is resolved by the convex modular cost $d(k-d)$.

We also notice that Yu, Zhang and Zhang~\cite{YZZ} proposed another conjecture, which is weaker than Conjecture~\ref{conj:1.4}. We list it as follows:
\begin{conjecture}[Yu, Zhang and Zhang~\cite{YZZ}]\label{c5}
Let $k\ge 2$ be fixed and $m$ be sufficiently large, and $G$ is a $\{W_t: t\ge 2k+1\}$-free graph with $m$ edges.
\begin{enumerate}[label=\rm(\roman*)]
\item If $m=\binom{k}{2}+kq,
\) then
\(
\rho(G)\le \frac{k-1+\sqrt{4m-k^2+1}}{2}.
\)
Equality holds if and only if
\(
 G\cong K_k\vee qK_1,$
up to isolated vertices.

\item If $m=\binom{k}{2}+kq+r$ with $1\le r\le k-1,$ then
\(
\rho(G)\le \rho(S_{k,m}).
\)
Equality holds if and only if
$G\cong S_{k,m}$, where $S_{k,m}$ is obtained from $K_k\vee qK_1$ by adding a vertex $z$ and joining it to exactly $r$ vertices of the
$K_k$.
\end{enumerate}
\end{conjecture}
By Theorems~\ref{thm:FZZ} and \ref{thm:main}, Conjecture~\ref{c5} holds for $k\ge 3.$ For the case $k=2,$ one may see Conjecture~\ref{c5} does not hold based on the results in \cite{LGY}.

\section*{Statements and Declarations}

\textbf{Acknowledgments}
We would like to express our sincere gratitude to Dr. Yongtao Li for his careful reading of this manuscript and for offering insightful comments and constructive suggestions, which substantially improved the presentation and rigor of our work.

\textbf{Competing interests}
The authors declare that they have no competing interests.

\textbf{Data availability}
No data were generated or analysed during the current study.

\textbf{Funding}
Shuchao Li was supported by the National Natural Science Foundation of China (Grant Nos. 12571365, 12171190).


\textbf{Use of AI tools}
During the preparation of this manuscript, the authors used Doubao for language polishing and structural organization. The authors reviewed and edited all content and take full responsibility for the final manuscript.


\begin{thebibliography}{99}
\small \setlength{\itemsep}{-.8mm}
\bibitem{BM2008}
J.A. Bondy and U.S.R. Murty, \emph{Graph Theory},
Graduate Texts in Mathematics, Vol.~244, Springer, New York, 2008.

\bibitem{BrualdiHoffman}
R.A. Brualdi and A.J. Hoffman,
On the spectral radius of $(0,1)$-matrices,
\emph{Linear Algebra Appl.} \textbf{65} (1985), 133--146.

\bibitem{Liu-Li-Li-Yu}
L. Chang, J.P. Li, S.C. Liu, Y.T. Yu,
A Brualdi-Hoffman-Turan problem on theta graph,
\emph{Adv. in Appl. Math.} \textbf{173} (2026) 103000.

\bibitem{CioabaDesaiTait}
S.~M. Cioab\u{a}, D.~N. Desai and M. Tait,
The spectral radius of graphs with no odd wheels,
\emph{European J. Combin.} \textbf{99} (2022), Paper No.~103420.

\bibitem{DasYamini}
J. Das and V. Yamini,
A sharp fixed-size spectral bound for \(kK_3\)-free graphs,
\url{arXiv:2608.05869, 2026.}

\bibitem{FangLinZhai}
L.~Fang, H.~Lin and M.~Zhai,
Edge-spectra supersaturation for tripartite color-critical graphs,
\url{arXiv:2608.04485, 2026.}


\bibitem{FangZhaiZhang}
L.~Fang, M.~Zhai and Y.~Zhang,
Dense-core approach to the Brualdi--Hoffman--Tur\'an problem on odd wheels,
\url{arXiv:2608.16127, 2026.}

\bibitem{LGY}
J. Gao, Xianya Geng, S.C. Li, Spectral extremal graphs for $W_5$-free graphs with odd size, in preparation.


\bibitem{GaoLi}
J. Gao, X. Li, Spectral radius of graphs of given size with a forbidden fan graph $F_6$,
\emph{Discrete Math.} \textbf{349} (2026) 114695.

\bibitem{Godsil1}
C. Godsil, G. Royle,
\emph{Algebraic Graph Theory}, vol. 207 of Graduate Texts in Mathematics,
Springer-Verlag, New York, 2001.

\bibitem{H-J-2012}
R.A. Horn and C.R. Johnson, \emph{Matrix Analysis}, 2nd ed.,
Cambridge University Press, Cambridge, 2012.

\bibitem{JoyentanujYamini}
D.~Joyentanuj and V.~Yamini,
A sharp fixed-size spectral bound for $kK_3$-free graphs,
\url{arXiv:2608.05869, 2026.}


\bibitem{LiZhaoZou}
S.C.~Li, S.S.~Zhao and L.T.~Zou,
Spectral extrema of graphs with fixed size: forbidden a fan graph, a friendship graph or a theta graph,
\emph{J. Graph Theory} \textbf{110} (2025), no.~4, 483--495.

\bibitem{LiLiuZhangStability}
Y.~Li, H.~Liu and S.~Zhang,
An edge-spectral Erd\H{o}s--Stone--Simonovits theorem and its stability,
\url{arXiv:2508.15271, 2025.}


\bibitem{LiZhaiShu}
X.~Li, M.~Zhai and J.~Shu,
A Brualdi--Hoffman--Tur\'an problem on cycles,
\emph{European J. Combin.} \textbf{120} (2024), Paper No.~103966,
13 pp.

\bibitem{N1}V.~Nikiforov, Some inequalities for the largest eigenvalue of a graph, \emph{Combin. Probab. Comput.} \textbf{11} (2002) 179--189.

\bibitem{E1}E.~Nosal, \emph{Eigenvalues of Graphs}, Master's thesis, University of Calgary, 1970.

\bibitem{YLP}
L.J. Yu, Y.T. Li, Y.J. Peng,
Spectral extremal graphs for fan graphs, \emph{Discrete Math.} \textbf{348} (5) (2025) 114391.

\bibitem{YZZ}Y.T. Yu, H.H. Zhang, M.J. Zhang, A survey of edge-spectral-Tur\'an type problems in spectral graph theory: Results, conjectures and open problems, \emph{Discrete Appl. Math.} \textbf{395} (2026) 210-221.

\bibitem{ZhaiLiLou}
M. Zhai, R. Li and Z. Lou,
Advances on two spectral conjectures regarding booksize of graphs,
\emph{European J. Combin.} \textbf{138} (2026), Paper No.~104431.

\bibitem{ZhaiLinShu}
M.~Zhai, H.~Lin and J.~Shu,
Spectral extrema of graphs with fixed size: cycles and complete bipartite graphs,
\emph{European J. Combin.} \textbf{95} (2021), Paper No.~103322,
18 pp.

\bibitem{ZhangOddWheels}
W. Zhang,
More results on the spectral radius of graphs with no odd wheels,
arXiv:2408.03595, 2024.

\bibitem{ZhangWang}
Y.~Zhang and L.~Wang,
Spectral extrema of graphs with fixed size: forbidden star forests,
\emph{Discrete Math.} \textbf{349} (2026), no.~5,
Paper No.~114976, 10 pp.


\end{thebibliography}
\end{document}